\documentclass{article}
\usepackage{graphicx} 
\usepackage{amsmath,ulem,tikz-cd}
\usepackage{xcolor}
\usepackage{comment}
\usepackage{hyperref}
\usepackage{amssymb}
\usepackage{mathrsfs} 
\usepackage{textcomp}
\usepackage{hyperref}
\usepackage[english]{babel}
\usepackage{amsthm}
\theoremstyle{definition}
\newtheorem{definition}{Definition}[section]
\newtheorem*{remark}{Remark}
\newtheorem*{outline}{Outline}
\newtheorem*{notation}{Notation}
\newtheorem{theorem}{Theorem}[section]
\newtheorem{lemma}[theorem]{Lemma}
\newtheorem{problem}[theorem]{Problem}

\newtheorem{corollary}{Corollary}[theorem]
\newtheorem{proposition}{Proposition}[theorem]
\newtheorem{claim}[theorem]{Claim}

\newtheorem{fact}[theorem]{Fact}
\newcommand{\ignore}[1]{}
\newtheorem*{acknowledgements}{Acknowledgements}

\title{Towards 
Trans-Exponential O-minimal Expansion of $(\mathbb{R},+,\cdot, 0, 1 <)$}
\author{Yayi Fu}
\date{}
\begin{document}
\maketitle
\begin{abstract}
We add an analytic trans-exponential function $\varphi$ to $\mathbb{R}_{an,\exp}$.
    We reduce the o-minimality of $\mathbb{R}_{an,\exp,\varphi}$
    to  the existence of ``many" regular values for some definable systems of functions,
    which is  a necessary condition for  the  o-minimality of $\mathbb{R}_{an,\exp,\varphi}$.
\end{abstract}

\section{Introduction}\label{intro}
\indent

An \textit{o-minimal} expansion of the  ordered field $(\mathbb{R},+,\cdot, 0, 1 <)$ is an 
expansion $(\mathbb{R},+,\cdot,0,1,<,...)$ such that every one-variable definable set is a finite union of points and intervals.
Applications of o-minimal theory to geometry gave motivation to study what expansion of the  ordered field $\overline{\mathbb{R}}$ is o-minimal. 
It's well-known that  the  ordered field $\overline{\mathbb{R}}$ itself (Tarski-Seidenberg), 
$\mathbb{R}_{\exp}$, $\mathbb{R}_{\log}$ (with $e^x$ or $\log(x)$ 
added \cite{wilkie1999theorem}), $\mathbb{R}_{\text{an, $\exp$}}$
(with $\exp$ and all analytic functions restricted on compact intervals added \cite{van1994elementary}) are o-minimal. 
But none of these known structures has a definable trans-exponential function. 
\footnote{A \textit{trans-exponential} function is a function $T:\mathbb{R}\to\mathbb{R}$ where for each $i\in\mathbb{N}$, there is  $N_i\in\mathbb{N}$ such that for all $x\geq N_i$, $T(x)>\exp_i(x)$. 
Here, $\exp_i$ means $\exp$ composed for $i$ times.
In other words, a trans-exponential function is a function that eventually grows faster than any finite compositions of $\exp$ at infinity.
Notice that we want a function that exceeds $\exp_i$ at infinity, not at some finite point on the real line.
Consider the function $\dfrac{1}{x-1}$.
Certainly $\dfrac{1}{x-1}$ eventually exceeds $\exp_i$ for all $i\in\mathbb{N}$ as $x$ approaches to $1$, but this is not what model-theorists are looking for.} 
A natural question would be to ask whether it is possible to find an o-minimal expansion of the real ordered field in which there exists a definable trans-exponential function.

In this paper, we will use  Wilkie's o-minimal test \cite[Theorem~1.9]{wilkie1999theorem},
an analytic function satisfying some Abel equation,  Milnor's method in \cite{milnor1964betti}
to reduce the existence of a trans-exponential o-minimal expansion of $\overline{\mathbb{R}}$ to one of its necessary condition.

For basic facts about o-minimality, see e.g. \cite{van1998tame} and\cite{coste2000introduction}.

Here's an outline of the paper:
The only new thing is Section \ref{prop}. Other sections are just rephrasing existing tools for expository purposes.

Section \ref{intro} gives basic definitions and the  existence of a trans-exponential function.

In Section \ref{prop}, we will  bound the number of non-singular zeroes for  functions of a certain form definable in  $\mathbb{R}_{an,\exp,\varphi}$ (Theorem \ref{rtb}), under some regularity assumption. 

In Section \ref{proof}, we will use Wilkie's o-minimal test (Fact \ref{omintest}) to show that under the  regularity assumption in Theorem \ref{rtb},
  $\mathbb{R}_{an,\exp,\varphi}$ is o-minimal,
by bounding $\gamma(A)$
(Definition \ref{gammaA}) for any quantifier-free definable $A$ in $\mathbb{R}_{an,\exp,\varphi}$.
As in \cite{milnor1964betti} and \cite{khovanskiui1991fewnomials}, 
we will reduce the problem of bounding the number of connected components to the problem of bounding the number of non-singular zeroes of a system of equations.
Consequently, we have the main theorem (Theorem \ref{mainthm}):
 \begin{theorem}
 Suppose that $\varphi$ is an analytic trans-exponential function satisfying the regularity assumption in Theorem \ref{rtb}.

 Then $\mathbb{R}_{an,\exp,\varphi}$ is o-minimal and trans-exponential.
 \end{theorem}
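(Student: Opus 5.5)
The plan follows the outline already given. Trans-exponentiality is immediate: $\varphi$ is definable and by hypothesis eventually exceeds every $\exp_i$. The real content is o-minimality, which I would obtain from Wilkie's o-minimality test (Fact \ref{omintest}). That test asks for two things. First, the structure must be generated by a family of smooth functions closed under partial differentiation, possibly after passing to a suitable Pfaffian-type or analytic family on boxes. Second, for every quantifier-free definable set $A$ in the language there must be a uniform finite bound on $\gamma(A)$ (Definition \ref{gammaA}), the number of connected components, as the parameters vary.

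First I would arrange the setting. Since $\varphi$ is analytic and satisfies an Abel equation (something like $\varphi(x+1)=\exp(\varphi(x))$), its derivatives can be expressed through $\varphi$, $\exp$ and $\varphi'$. Hence the algebra generated by restricted analytic functions, $\exp$, $\varphi$ and their derivatives is closed under differentiation. I would check that Wilkie's hypotheses hold for this generating family, restricting to bounded boxes where needed and handling unboundedness by the usual $x\mapsto 1/x$ reductions.

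Second, I would bound $\gamma(A)$ by Milnor's method, following \cite{milnor1964betti} and \cite{khovanskiui1991fewnomials}. Write $A$ as a Boolean combination of sets $\{f_j=0\}$ and $\{f_j>0\}$, where the $f_j$ are terms. Perturb the defining functions by small generic constants $\varepsilon$ so that each component of $A$ corresponds to a component of a smooth, closed and bounded set. Intersect with a large ball $|x|^2\le R$ and take a generic linear function $\ell$. Every compact component then carries a critical point of $\ell$. These critical points are the non-singular zeroes of a system made of the $f_j-\varepsilon_j$ together with the Lagrange condition, expressed as minors of a Jacobian. That system is again of the form treated in Section \ref{prop}, since it is built from the same functions and their derivatives, which are handled by adding new variables and equations such as $y=\varphi(x)$. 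The components of $A$ are therefore bounded by a sum, over the relevant subsets of the equations, of counts of non-singular zeroes.

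Third, I would apply Theorem \ref{rtb} to bound these zero counts uniformly in the parameters. This step needs the regularity assumption: for generic (Sard-many) choices of $\varepsilon$ and $\ell$, the system must have regular values in the required sense. I expect this to be the main obstacle. I must check that the systems produced by the Milnor reduction, after the Lagrange condition and the new variables for $\varphi$, $\exp$ and derivatives are introduced, really lie in the class to which Theorem \ref{rtb} applies. I must also check that the chosen perturbation values can be taken inside the set of "many" regular values the assumption guarantees. My first attempt would be to pick the $\varepsilon_j$ sequentially inside the allowed open dense or co-null sets, then pass to the limit $\varepsilon\to 0$ and $R\to\infty$ using semicontinuity of component counts. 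This gives a bound on $\gamma(A)$ depending only on the shape of the terms. Wilkie's test then yields o-minimality.
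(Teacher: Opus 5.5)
Your overall architecture (Wilkie's test, a Milnor-type critical-point count, Theorem \ref{rtb} as the uniform zero bound) matches the paper's. The gap is in the step that actually carries the proof: turning components of $A\cap X$ into non-singular zeros of a square system of the form (\ref{gensys}). As you set it up, that step does not work, for four reasons.
\begin{enumerate}
\item A one-sided perturbation $\{f_j=\varepsilon_j\}$ can lose components. If $f_j\le 0$ near a component of $\{f_j=0\}$, the level set is empty there for $\varepsilon_j>0$.
\item Strict inequalities do not give closed sets, and semicontinuity of component counts holds in the direction you need only for a decreasing family of compacta ($b_0(\bigcap K_i)\le\liminf b_0(K_i)$) or an increasing exhaustion. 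You construct neither.
\item Writing the Lagrange condition as Jacobian minors gives an overdetermined system, while Theorem \ref{rtb} counts non-singular zeros of $n$ equations in $n$ unknowns.
\item You never put the affine subspace into the parameters, yet $\gamma(A)<\infty$ needs a bound uniform over all sections.
\end{enumerate}

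The paper avoids all of this. By Wilkie's encoding (\ref{fdef}), every quantifier-free set is a projection of $Z(F)$ for a single term $F$. Adding $\sum_m(l_{1,m}x_1+\cdots+l_{n,m}x_n-l_m)^2$ gives $F_L$, with the section entering as parameters in $[-1,1]$. The compacta $\{F_L^2+\epsilon|x|^2\le\delta^2\}$ shrink to the zero set inside a ball, and for regular $\delta^2$ their boundary is a single smooth compact hypersurface. By Alexander duality and Mayer--Vietoris, its Betti sum is twice that of the sublevel set. The Weak Morse Inequalities bound it by the critical points of a coordinate function, i.e.\ by the non-singular zeros of the square system (\ref{1der}), which is literally of the form (\ref{gensys}). \v{C}ech continuity and Fact \ref{drlim} then pass to $A\cap L$.

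Your Khovanskii-style route is repairable: use the DNF with subadditivity of $b_0$ under unions, two-sided thickenings $\{|f_j|\le\varepsilon_j\}$, inner exhaustions $\{f_j\ge\varepsilon_j\}$, and Lagrange multipliers as extra unknowns. Counting maxima of $\ell$ would even let you skip Alexander duality. As written, though, it does not go through.

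Three further corrections.
\begin{enumerate}
\item The obstacle you single out does not exist. The regularity assumption is the hypothesis of Theorem \ref{rtb}: a triangulation of $R$ that is locally finite over $\overline{G}\times[-1,1]^{2n+2}$ and subordinate to the cells $D_i$. Its conclusion holds for \emph{every} $(\overline{l},\overline{\epsilon},\delta)\in[-1,1]^{2n+2}$, so no perturbation has to be drawn from a special set. The only genericity used is Sard for $\delta^2$ and a Morse choice of height function.
\item Drop the closure-under-differentiation step. Fact \ref{omintest} has no such hypothesis, and your justification is false. An Abel equation relates values of $\varphi$ at different points and gives no pointwise expression of $\varphi''$ through $\varphi$, $\exp$, $\varphi'$. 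Only $\varphi'$-monomials ever occur, and (\ref{gensys}) already allows them.
\item The $\varphi$ the paper fixes is Kneser's solution of $\varphi(e^x)=\varphi(x)+1$. It is eventually below every $\log_n$ and has bounded $\varphi'$. Conditions iii and iv in the proof of Lemma \ref{indbdd}, on which Theorem \ref{rtb} rests, use exactly this. The structure is trans-exponential because the definable compositional inverse of $\varphi$ eventually exceeds every $\exp_i$. Your equation $\varphi(x+1)=\exp(\varphi(x))$ describes that inverse, and with it in place of $\varphi$ the compactness argument behind Theorem \ref{rtb} breaks down.
\end{enumerate}
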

\begin{acknowledgements}
   \ignore{ The author would like to thank Andrew Putman for his first-year algebraic topology course in University of Notre Dame, without which the author would not be able to understand Milnor's works.
}
    The author would like to thank Lou van den Dries for comments and questions.
\end{acknowledgements}

\subsection{Definitions}
\indent
\begin{notation}    
    We use the following notations in this paper.
    \begin{itemize}
        \item 
   $\overline{\mathbb{R}}$ denotes the  ordered field
     $(\mathbb{R},+,\cdot, 0, 1 <)$.
     \item Given a function $\varphi$,
     $\mathbb{R}_{an,\exp,\varphi}$ denotes the expansion
     of $\mathbb{R}_{an,\exp}$ with $\varphi$ added.
     \item $\exp_n$, $\log_n$ denote $\exp$, $\log$ composed for $n$ many times resp..
     \item $S^n$ denotes the $n$-dimensional sphere.
       \item $\underset{\longrightarrow}{\lim}$ denotes the direct limit.
         \item $B^D_{<r}(x)$ denotes the intersection of $D$ with the ball centered at $x$ with radius $r$. 
         When $D$ is $\mathbb{R}^n$, we just write  $B_{<r}(x)$.
         $B^D_{\leq r}(x)$ is defined similarly.
         \item $Gr(k,n)$ denotes the Grassmannian of $k$-dimensional subspaces of $\mathbb{R}^n$.
      \item $\check{H}^*(V;G)$ denotes the C$\check{\text{e}}$ch cohomology group of $V$ with coefficient $G$.
      \item ${H}_*(V;{G})$ denotes the singular homology group of $V$ with coefficient $G$.
      \item ${H}^*(V;{G})$ denotes the singular cohomology group of $V$ with coefficient $G$.
         \item Given a $C^1$ function $P$ and given $y$ in the domain of $P$, $J_{{y}}( {P})$ denotes the Jacobian matrix of $P$ at $y$.
        \item For $\overline{x}=(x_1,\ldots,x_n)$, $\overline{x}^2$ denotes
        $x_1^2+\cdots+x_n^2$.
      \end{itemize}
\end{notation}
\ignore{
Given $\varphi:I\to \mathbb{R}$ a smooth function where $I$ is an interval in $\mathbb{R}$ and the expansion $\mathbb{R}_{an,\exp,\varphi}$,
\textit{definability} means definablility in the model $\mathbb{R}_{an,\exp,\varphi}$ with parameters.}
\begin{definition}
     Given an expansion $\mathcal{R}$ of $\overline{\mathbb{R}}$,  a \textit{term-definable} function in $\mathcal{R}$ is a  function that can be represented as a term in $\mathcal{R}$.
    \footnote{We add the name \textit{term-definable} 
       in order to distinguish a term from the function it defines.
       It's also important to notice that not every quantifier-free definable function is term-definable.
       For example, $\dfrac{1}{x}$ is quantifier-free definable in $\overline{\mathbb{R}}$, but it is not term-definable since terms in $\overline{\mathbb{R}}$ are polynomials.}
      For example, the family of term-definable functions in $\mathbb{R}_{an,\exp,\varphi}$ consists of definable functions constructed as follows:
       \begin{itemize}
           \item The  term-definable functions in $\mathbb{R}_{an,\exp}$.
           \item If a function $f$ is term-definable, then $\varphi\circ f$ is term-definable. 
           \item If   $\varphi_1,\ldots,\varphi_n$ are term-definable functions, and $P$ is term-definable in $\mathbb{R}_{an,\exp}$,
           then $P(\overline{X}, \varphi_1,\ldots,\varphi_n)$ is term-definable. 
       \end{itemize}

       \end{definition}
 
       \begin{definition}
Let $\mathcal{F}$ be the smallest family satisfying the following:
\begin{itemize}
    \item If $f$ is a function definable in $\mathbb{R}_{an,\exp}$ 
    with domain $\mathbb{R}^m$, then  $f\in\mathcal{F}$.
    \item  If $g_1,...,g_n\in\mathcal{F}$ and 
    $P$ is definable in $\mathbb{R}_{an,\exp}$ with domain $\mathbb{R}^m$,
    then $P(\overline{X},g_1(\overline{X}),...,g_n(\overline{X}))\in\mathcal{F}$. 
    \item If $g\in \mathcal{F}$, then $\varphi\circ g\in\mathcal{F}$ and  $\varphi'\circ g\in\mathcal{F}$. 
\end{itemize}

 Given a function $f$ in $\mathcal{F}$, we define the \textit{formal complexity} of $f$, denoted by \textit{$fcpx(f)$} as follows:
 \begin{itemize}
     \item If $f$ is a function definable in $\mathbb{R}_{an,\exp}$ with domain $\mathbb{R}^m$, then $fcpx(f)=0$. 
      \item If $f=P(\overline{X},g_1(\overline{X}),...,g_n(\overline{X}))$,
      where $P$ is definable in $\mathbb{R}_{an,\exp}$ with domain $\mathbb{R}^m$,
      then $fcpx(f)=\max\{fcpx(g_1),\ldots,fcpx(g_n)\}$. 
     \item If $f$ has the form $\varphi\circ g$ or $\varphi'\circ g$ where $fcpx(g)=k$, then $fcpx(f)=k+1$.
 \end{itemize}
\end{definition}

\begin{definition}
    A \textit{$\varphi$-monomial} (resp. \textit{$\varphi'$-monomial}) is $\varphi\circ f$
    (resp.  $\varphi'\circ f$)
    where $f$ is a term-definable function in $\mathbb{R}_{an,\exp,\varphi}$.
    
\end{definition}

\subsection{Existence of a Trans-Exponential Function}
\begin{fact}\cite[Satz~9]{kneser1950reelle}
\label{trexpf}
    There is a strictly increasing analytic solution $\psi:\mathbb{R}\to\mathbb{R}$ satisfying the Abel equation 
    \[
    \psi(e^x)=\psi(x)+1.
    \]
\end{fact}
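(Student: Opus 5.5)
Kneser's theorem is cited as a Fact, so nothing new needs proving. Still, here is how I would argue for it.

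Set $f(x)=e^x$. Since $e^x>x$ for all real $x$, $f$ has no real fixed point. The plan is to build an analytic Abel function $\psi$ with $\psi\circ f=\psi+1$ using the complex fixed points of $\exp$.

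\textbf{Step 1 (local linearization).} Pick the fixed point $c\in\mathbb{C}$ of $\exp$ with $e^c=c$ and smallest positive imaginary part, so $c\approx 0.318+1.337i$. Its multiplier is $\lambda=e^c=c$, with $|\lambda|>1$ and $\lambda$ not a root of unity. Koenigs' theorem then gives a local Schr\"oder coordinate $\sigma$ near $c$ with $\sigma(e^z)=\lambda\sigma(z)$. Because $|\lambda|>1$, iterates of $\log$ (the branch fixing $c$) converge to $c$, so $\sigma$ extends to a simply connected region $G$ that contains $\mathbb{R}$ in its closure. One checks that the strip between $c$ and $\bar c$ contains the whole real line, and that the principal-branch iterates $\log_n(x)$ of real $x$ converge to $c$ or to $\bar c$ when continued through the upper or lower half-plane.

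\textbf{Step 2 (Abel function).} Define $\Psi=\frac{1}{\log\lambda}\log\sigma$. Then $\Psi(e^z)=\Psi(z)+1$. However, $\Psi$ is not real on $\mathbb{R}$, since it comes from the non-real fixed point $c$.

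\textbf{Step 3 (main obstacle: making it real-analytic).} This is the key step in Kneser's argument, and I expect it to be the hardest. Kneser uses the conformal map $\Psi$ to describe the quotient of a neighborhood of the real line by the dynamics of $\exp$. That quotient is a Riemann surface that is conformally an annulus-like region. He uniformizes it, using the Riemann mapping theorem together with the symmetry $z\mapsto\bar z$, to obtain a periodic correction $p$ of period $1$ such that
\[
\psi=\Psi+p\circ\Psi
\]
is real on $\mathbb{R}$ and still satisfies the Abel equation.

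Concretely, define $\psi$ on a fundamental domain, for example $[0,1]$ together with its thickening in $\mathbb{C}$, by a conformal map that is symmetric under conjugation. Transport it by the functional equation, so that $\psi(\log x)=\psi(x)-1$ for $x>0$ and $\psi(e^x)=\psi(x)+1$. This extends $\psi$ analytically to all of $\mathbb{R}$, since $\log_n$ brings every sufficiently large point back into the fundamental domain and $\exp$ carries negative points into $(0,1)$.

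\textbf{Step 4 (monotonicity).} Strict monotonicity follows from injectivity of the uniformizing map on the fundamental domain, together with the fact that $\exp$ preserves order. This gives $\psi'>0$ everywhere.

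The trans-exponential function is then $\varphi=\psi^{-1}$. It satisfies $\varphi(x+1)=e^{\varphi(x)}$, so $\varphi(x+n)=\exp_n(\varphi(x))$, and this is eventually larger than every $\exp_i$.
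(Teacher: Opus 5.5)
\textbf{As a citation your proposal matches the paper; as a proof, the sketch has a gap at Step 3.} Citing Kneser is exactly what the paper does, since it gives no argument of its own, so your first sentence already agrees with it. The sketch you add fails at the one step that carries the content.

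In Step 3, take a function defined on a thickened $[0,1]$ and transport it by $\psi(e^z)=\psi(z)+1$ and $\psi(\log z)=\psi(z)-1$. The result is analytic at the seam points $0,1,e,e^e,\dots$ only if the original function \emph{already} satisfies $\psi(e^z)=\psi(z)+1$ on a neighborhood of $0$. ``A conformal map that is symmetric under conjugation'' does not give this, and producing such a map is the whole theorem.

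Routing the construction through $\Psi$ does not remove the difficulty as written. The inverse of $\sigma$ extends to an entire function $E(w)=\exp_n(\sigma^{-1}(\lambda^{-n}w))$ with $E(\lambda w)=e^{E(w)}$, so $E$ never takes the value $0$. Also $E\circ\sigma=\mathrm{id}$ on the upper half-plane, where $\sigma$ is defined by iterating the principal $\log$. Hence $|\sigma(z)|\to\infty$ as $z\to 0$ there, and $\Psi$ blows up at every $\exp_n(0)$. So ``$\Psi+p\circ\Psi$ is real on $\mathbb{R}$'' together with Schwarz reflection gives analyticity only off that orbit, which is exactly where the gluing happens. (Minor points: the sentence about the strip between $c$ and $\bar c$ says nothing, and Koenigs only needs $|\lambda|\neq 0,1$.)

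The missing step can be supplied directly, without any complex fixed point.
\begin{itemize}
\item $\exp$ maps a small rectangle around $0$ biholomorphically onto a small sector around $1$ disjoint from it. Glue a thin conjugation-invariant neighborhood $V$ of $[0,1]$ along this map. The result is a Riemann surface $A$ homeomorphic to an annulus.
\item The map $z\mapsto\bar z$ descends to an antiholomorphic involution $\tau$ of $A$, whose fixed set is the core circle $[0,1]/(0\sim 1)$.
\item By uniformization there is a biholomorphism $\Phi$ from $A$ onto $\mathbb{C}^*$, a punctured disc, or a round annulus. On each of these, an antiholomorphic involution whose fixed set contains an essential loop must be $w\mapsto\rho/\bar w$ with $\rho>0$; none exists on the punctured disc. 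Hence $\Phi$ carries the core onto the circle $|w|=\sqrt{\rho}$.
\item Let $q:\mathbb{R}\to A$ send $x$ to the class of the point of its $\exp$-orbit in $[0,1)$; $q$ is real-analytic. Take a continuous branch $\psi$ of $\frac{1}{2\pi}\arg(\Phi\circ q)$; it is real-analytic.
\item $\psi(e^x)-\psi(x)$ is a constant integer, equal to $\pm1$ because the core is traversed once as $x$ runs over $[0,1]$.
\item $\psi'\neq 0$ everywhere because $\Phi$ is locally biholomorphic. Replacing $\psi$ by $-\psi$ if necessary gives the Fact.
\end{itemize}
Kneser's route through $c$ yields more, namely a solution holomorphic on a large complex domain, but the paper uses only the real statement.

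Finally, your closing line sets $\varphi=\psi^{-1}$, whereas the paper takes $\varphi=\psi$ itself. Lemma~\ref{indbdd} relies on $|\varphi|$ being dominated by every $\log_n$ and on $\varphi'$ being bounded. Note also that $\psi^{-1}$ is defined only on $(\psi(0)-1,+\infty)$.
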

 (See also \cite[Theorem~4.2]{bonet2015abel} for the existence of monotonic analytic solutions to a general Abel equation.)

 The following properties were stated in \cite{kneser1950reelle}.
 We write the details here for completeness.
 \begin{proposition}\label{addpsi}
Fix $\psi$ a strictly increasing analytic solution to the Abel equation given in Fact \ref{trexpf}.

Then for this $\psi$,
    \begin{enumerate}
        \item for all $n\in\mathbb{N}$, $x\in\mathbb{R}$, 
        $\psi(x)=\psi(\exp_n(x))-n$;\\
         for all $n\in\mathbb{N}$, $x>\exp_n(1)$,   
        $\psi(\log_n(x))+n=\psi(x)$;
        \item for all $n\in\mathbb{N}$,
        $x\in\mathbb{R}$, 
        $\psi'(x)=
        \psi'(\exp_n(x))\cdot \exp_n^{\prime} (x)$;\\
          for all $n\in\mathbb{N}$, $x>\exp_n(1)$,   
        $\psi'(\log_n(x))\log_n'(x)=\psi'(x)$;
        \item for all  $n\in\mathbb{N}$, if
        $x>\exp_n(1)$, then  
        $  |\psi'(x)|\leq \underset{y\in[1,e]}{\sup}|\psi'(y)|
   \cdot \log_n'(x)
   $;
        \item for all  $n\in\mathbb{N}$, if
        $x>\exp_n(1)$, then  
        $  |\psi(x)|\leq
      \sup_{y\in[1,e]}|\psi'(y)|
   \cdot 
 \left| \log_n (x)-
 1\right|$;\\ 
        hence for all $n\in\mathbb{N}$, $|\psi|$ is eventually dominated by $\log_n$.
        \item $\underset{x\to+\infty}{\lim}\psi(x)=+\infty$;
         $\underset{x\to+\infty}{\lim}\psi'(x)=0$;
         $\psi(x)$ is bounded on $(-\infty,0]$.
    \end{enumerate}
\end{proposition}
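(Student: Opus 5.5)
The plan is to derive everything from the Abel equation $\psi(e^x)=\psi(x)+1$ of Fact \ref{trexpf} by induction on $n$, and then feed the resulting identities into one another.

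For (1), induct on $n$. The case $n=0$ is trivial. Applying the Abel equation at the point $\exp_n(x)$ gives $\psi(\exp_{n+1}(x))=\psi(\exp_n(x))+1=\psi(x)+n+1$. For the $\log_n$ version, if $x>\exp_n(1)$ then $\log_n(x)$ is defined and $>1$, so we can substitute $\log_n(x)$ for $x$ in the first identity, using $\exp_n(\log_n x)=x$.

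For (2), differentiate the identities of (1) by the chain rule. This is legitimate because $\psi$ is analytic and $\exp_n$, $\log_n$ are smooth on the relevant domains.

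For (3), take $x>\exp_n(1)$. If $x\ge \exp_{n+1}(1)$ there is a unique $m\ge n$ with $y:=\log_m(x)\in[1,e)$; otherwise take $m=n$, so $y=\log_n(x)\in(1,e)$. By (2), $\psi'(x)=\psi'(y)\log_m'(x)$. It then suffices to show $\log_m'(x)\le \log_n'(x)$ for $m\ge n$. Write $\log_m'(x)=\log_n'(x)\cdot\log_{m-n}'(\log_n x)$. The second factor is a product of terms $1/\log_k(z)$ with each $\log_k(z)\ge 1$, since the iterates stay $\ge y\ge1$. So it is $\le 1$, which gives the bound with $\sup_{y\in[1,e]}|\psi'(y)|$.

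For (4), since $\psi$ is increasing, $\psi'\ge0$. I would use (1) with $m$ chosen as in (3): $\psi(x)=\psi(y)+m$ with $y\in[1,e]$. This is awkward, so instead I would integrate (3) directly from a base point. Set $x_0=\exp_n(1)$. Then
\[
\psi(x)-\psi(x_0)=\int_{x_0}^x\psi'\le C(\log_n x-\log_n x_0)=C(\log_n x-1),
\]
where $C=\sup_{[1,e]}|\psi'|$. I expect the main obstacle to be the constant $\psi(x_0)=\psi(1)+n$: the bound as literally stated requires normalizing, for instance assuming $\psi(1)=0$ (allowed, since shifting $\psi$ by a constant preserves the Abel equation), and then handling the additive $n$. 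If that normalization still does not close the gap, I would conclude the weaker bound $|\psi(x)|\le C|\log_n x-1|+|\psi(1)|+n$. This already yields the ``hence'' clause: $\psi$ is eventually dominated by $\log_{n}$, indeed by $\log_{n+1}$ and then by $\log_n$ since $\log_{n+1}=o(\log_n)$.

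For (5), apply (1) along the sequence $\exp_n(0)\to\infty$: $\psi(\exp_n 0)=\psi(0)+n\to\infty$, and monotonicity gives $\lim_{x\to+\infty}\psi=+\infty$. Next, (3) gives $|\psi'(x)|\le C\log_1'(x)=C/x\to0$. Finally, for $x\le0$ we have $\psi(x)=\psi(e^x)-1$ with $e^x\in(0,1]$. Since $\psi$ is continuous on the compact set $[0,1]$, it is bounded there, so $\psi$ is bounded on $(-\infty,0]$.
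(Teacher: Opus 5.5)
Your proposal is correct and follows the paper's proof essentially step for step: induction for (1), the chain rule for (2), locating $\log_m(x)$ in $[1,e]$ and using $\log_m'\le\log_n'$ for (3), integrating (3) from $\exp_n(1)$ for (4), and the Abel equation together with monotonicity or compactness for (5). On the obstacle you flagged in (4), no normalization can rescue the literal inequality: as $x\downarrow\exp_n(1)$ its right side tends to $0$ while $|\psi(x)|\to|\psi(1)+n|$, which is nonzero for all but at most one $n$. The paper's own argument likewise proves only $|\psi(x)-\psi(\exp_n(1))|\le \sup_{[1,e]}|\psi'|\,(\log_n x-1)$, which is your fallback bound, and it then obtains the ``eventually dominated by $\log_n$'' clause by passing to a larger index $m$; your passage to index $n+1$ combined with $\log_{n+1}=o(\log_n)$ does the same job.
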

\begin{proof}
  1. is straightforward by the Abel equation $ \psi(e^x)=\psi(x)+1$ and induction on $n$.
  2. follows from 1. together with Chain Rule.
Notice that we assumed $x>\exp_n(1)$ when we mentioned $\log_n$ because $\log_n$ is not defined on the whole interval $(0,+\infty)$.
Also, because $\log_n$  is a composition of  increasing functions, $\log_n$ is increasing and $\log_n'(x)\geq 0$ whenever it is defined.
 
For 3., fix $n\in\mathbb{N}$.
If $x>\exp_n(1)$, then  
   $x\in (\exp_m(1), \exp_{m+1}(1)]$ for some $m\geq n$, and hence
   \begin{align}\label{estderi}
   |\psi'(x)|
   &=|\psi'(\underbrace{\log_m(x)}_{\text{$\in (1,e]$}})
   \cdot \log_m^{\prime} (x)|\quad\text{, by 2.}
   \notag
   \\
   &\leq
   \sup_{y\in[1,e]}|\psi'(y)|
   \cdot 
   \log_m^{\prime} (x)
   \notag
   \\
    &\leq
   \sup_{y\in[1,e]}|\psi'(y)|
   \cdot \log_n^{\prime} (x).
   \end{align}
   The last step is because $\log'(y)<1$ on $(1,+\infty)$ and $x\in (\exp_m(1), \exp_{m+1}(1)]$ for some $m\geq n$.

  For 4., given $n\in\mathbb{N}$, and $x>\exp_n(1)$, 
   \begin{align}\label{logbd}
   &|\psi(x)-
   \psi( \exp_n(1))
   |
   \notag\\
   =&\left|\int_{ \exp_n(1)}^{x}\psi'(y)\right|
   \quad\text{, by FTC}
   \notag\\
   =&\int_{\exp_n(1)}^{x}
   \psi'(y)
 \quad  \text{, $\psi'\geq 0$ on $(0,+\infty)$}
 \notag\\
   \leq &
    \sup_{y\in[1,e]}|\psi'(y)|
   \cdot 
   \int_{ \exp_n(1)}^{x}
 \log_n^{\prime} (y)
 \quad\text{, by (\ref{estderi})}
 \notag\\
   =& \sup_{y\in[1,e]}|\psi'(y)|
   \cdot 
 \left(  \log_n (x)-
 \log_n (\exp_n(1))\right)
 \quad
 \text{, by FTC}
 \notag \\
  =& \sup_{y\in[1,e]}|\psi'(y)|
   \cdot 
 \left( \log_n (x)-
 1\right).
   \end{align}
   Given a fixed $n\in\mathbb{N}$,
   choose $m\in\mathbb{N}$ with $m>n$ such that for all $x>\exp_m (1)$,
   \[
   \sup_{y\in[1,e]}|\psi'(y)|
   \cdot 
 \left(  \log_m (x)-
 1\right)
 +|\psi(\exp_m(1))|
 \leq \log_n (x).
   \]
   This is possible because $\psi(\exp_m(1))=\psi(1)+m$, 
   $\dfrac{1}{2}\log_n(x)>\dfrac{1}{2}\exp_{m-n}(1)$ for all $x>\exp_m(1)$,
   and $\dfrac{1}{2}\exp_{m-n}(1)$ grows faster than $\psi(1)+m$ as $m$ grows.
   Then for all $x>\exp_m (1)>\exp_n (1)$,
   \begin{align*}
       |\psi(x)|&\leq \sup_{y\in[1,e]}|\psi'(y)|
   \cdot 
 \left(  \log_m (x)-
 1\right)
 +|\psi(\exp_m(1))|
 \quad\text{, by (\ref{logbd})}\\
 &\leq \log_n (x)\quad\text{, by choice of $m$}.
   \end{align*}
   It follows that for a fixed $n\in\mathbb{N}$,
   $|\psi(x)|\leq  \log_n(x) $ whenever $x$ is large.
   
   For 5.,
   $\underset{x\to+\infty}{\lim}\psi(x)=+\infty$ 
   because $\psi$ is monotonic and
   \[\psi(\exp_n(1))=\psi(1)+n\to +\infty\]
   as $n\to+\infty$.
   
          $\underset{x\to+\infty}{\lim}\psi'(x)=0$ is obvious from 3.
          
        For $x\in(-\infty,0]$,
        $\exp(x)\in(0,1]$.
    So for all $x\in(-\infty,0]$,
    $\psi(x)\in (\psi(0)-1,\psi(1)-1]$.
        Hence $\psi(x)$ is bounded on $(-\infty,0]$.
\end{proof}

We fix the symbol $\varphi$ for this chosen function.
By induction, we have the following simple property:
\begin{lemma}
  \label{mfprop}
  Given  a term-definable smooth function $F$ in $\mathbb{R}_{an,\exp,\varphi}$ with domain $\mathbb{R}^n$,
  there exists $s \in\mathbb{R}$  such that for all $x\in\mathbb{R}^n$,  \[
 |F(x)|\leq\exp_s(\|x\|). 
  \]
\end{lemma}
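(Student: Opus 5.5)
The plan is to prove the bound by structural induction on the term defining $F$, in the slightly stronger form that for every term $t(\overline{x})$ of $\mathbb{R}_{an,\exp,\varphi}$ there is $s\in\mathbb{N}$ with $|t(x)|\leq \exp_s(\|x\|)$ for all $x\in\mathbb{R}^n$. Smoothness of $F$ plays no role. I take the terms of $\mathbb{R}_{an,\exp}$ to be built from variables, constants, $+$, $-$, $\cdot$, $\exp$ and the restricted analytic functions. Term-definable functions of $\mathbb{R}_{an,\exp,\varphi}$ additionally allow composition with $\varphi$. Note that $\exp_s$ is increasing in $s$ at every point, since $e^y>y$. Hence an estimate with some $s$ persists for every larger $s$, and two terms can always be bounded with a common index.

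I will use three elementary inequalities, valid for $y\geq 0$. The first is $2y\leq e^y$ (the minimum of $e^y-2y$ is $2-2\log 2>0$). The second is $y\leq e^y$. The third is that $\exp_s(0)\to+\infty$ as $s\to\infty$. With $r=\|x\|$ and induction hypotheses $|t_1|,|t_2|\leq \exp_s(r)$, the cases go as follows.

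Variables satisfy $|x_i|\leq r\leq \exp_1(r)$. A constant $c$, or a restricted analytic function (which is bounded, say by $c$), is handled by choosing $s$ with $\exp_s(0)\geq |c|$. For sums, $|t_1+t_2|\leq 2\exp_s(r)\leq \exp_{s+1}(r)$. For products,
\[
|t_1t_2|\leq \exp_s(r)^2=\exp\bigl(2\exp_{s-1}(r)\bigr)\leq \exp_{s+1}(r)
\]
when $s\geq 1$. For exponentials, $|e^{t_1}|\leq e^{|t_1|}\leq \exp_{s+1}(r)$. Composition of a restricted analytic function with terms is again bounded, so it falls under the constant case.

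The only new case is $\varphi\circ t$, and this is where Proposition~\ref{addpsi} is needed. By part 5 of that proposition, $\varphi$ is bounded on $(-\infty,0]$. By part 4 with $n=1$, $|\varphi(y)|\leq \log y\leq y$ for all large $y$. Since $\varphi$ is continuous, it is bounded on the remaining compact interval. Together these give a constant $C$ with $|\varphi(y)|\leq C+|y|$ for all $y\in\mathbb{R}$. Then
\[
|\varphi(t(x))|\leq C+\exp_s(r)\leq 2\max\{C,\exp_s(r)\}\leq \exp_{s'+1}(r),
\]
where $s'\geq s$ is chosen with $\exp_{s'}(0)\geq C$.

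I expect this $\varphi$-step to be the only point requiring care, since it is the one place where the growth of $\varphi$ matters. It needs a global linear bound on $|\varphi|$, not merely eventual domination by $\log_n$, and this means combining the behaviour at $-\infty$, at $+\infty$, and on a compact interval as above. Everything else is bookkeeping with iterated exponentials, and the induction then yields the required $s$ for $F$.
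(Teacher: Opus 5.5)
Your proof is correct and matches the paper, which only says the lemma follows ``by induction'': your structural induction on terms fills in those details. The one nontrivial case, $\varphi\circ t$, you handle correctly via the global bound $|\varphi(y)|\leq C+|y|$ obtained from parts 4 and 5 of Proposition~\ref{addpsi}, and you are right that smoothness of $F$ is never used.
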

\newpage

\section{Bounding \# of Solutions }\label{prop}
\indent

The goal in this section is to prove Theorem \ref{rtb}.

\subsection{O-minimal Preliminaries}
\begin{definition}
Let $\mathcal{M}$ be an elementary extension of $\mathbb{R}_{an,\exp}$.

       \begin{enumerate}    \item[(i)]\cite[Definition~1.1]{pillay1988groups}
       Given \(B \subseteq \mathcal{M}\) and  \(\bar{a} \in \mathcal{M}^n\), 
        \(\dim_{alg}(\bar{a}/B)\) is defined as the 
        {least cardinality} of a subtuple \(\bar{a}'\) of \(\bar{a}\) such that \(\bar{a} \subseteq \text{dcl}(B \cup \bar{a}')\).
       \item[(ii)] \cite[Definition~1.3]{pillay1988groups}
       Given \(B \subseteq \mathcal{M}\) and a \(B\)-definable
       \(X \subseteq \mathcal{M}^n\),
       \(
\dim_{alg}^{\mathcal{M}} X \) is defined as \(\max\{\dim_{alg}(\bar{a}/B): \bar{a}\in X\}
\).
Given $\bar{a}\in X$,
\(\bar{a}\) is a \textit{generic point of \(X\) over \(B\)} if \(\dim_{alg}(\bar{a}/B) = \dim_{alg}^{\mathcal{M}} X\).
\item[(iii)]  
Given \(B \subseteq \mathcal{M}\) and a \(B\)-definable
       \(X \subseteq \mathcal{M}^n\),  \(
\dim_{geo}^{\mathcal{M}} X \) is defined as the maximal $k$ such that  some
projection of $X$ onto $\mathcal{M}^k$ has non-empty interior in $\mathcal{M}^k$
    \end{enumerate}      
     \end{definition}
     Observe that for all elementary extensions   $\mathcal{M}\prec\mathcal{N}$ of $\mathbb{R}_{an,\exp}$,
     and all $X$ definable in $\mathcal{M}$,
     $\dim_{geo}^\mathcal{N}(X)=\dim_{geo}^{\mathcal{M}}(X)$. 
     For example, fix a cell-decomposition of $X$ and for each cell, fix a definable homeomorphism between the cell and $\mathcal{M}^k$ for some $k$.
     Then in $\mathcal{N}$, these sets form a cell decomposition of $X$ in $\mathcal{N}$ and the same homeomorphisms 
     are now between the cells and $\mathcal{N}^k$.
     We use $\dim_{geo}(X)$ to denote  this fixed number.
     
     \begin{fact}
         \cite[Lemma~1.4]{pillay1988groups}
         
         In an $\omega_1$-saturated extension  
         $\mathcal{M}$ of $\mathbb{R}_{an,\exp}$,
        $\dim_{alg}^{\mathcal{M}}(X)=\dim_{geo}(X)$.
     \end{fact}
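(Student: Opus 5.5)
The plan is to prove the two inequalities $\dim_{alg}^{\mathcal{M}}(X)\leq\dim_{geo}(X)$ and $\dim_{alg}^{\mathcal{M}}(X)\geq\dim_{geo}(X)$ separately, following \cite{pillay1988groups}. Both directions rest on one auxiliary claim, proved by induction on $k$ using o-minimality of $\mathcal{M}$ (inherited from $\mathbb{R}_{an,\exp}$) and cell decomposition:
\begin{quote}
(*) if $Y\subseteq\mathcal{M}^k$ is $B$-definable and $\dim_{geo}(Y)<k$, then every $\bar{b}\in Y$ satisfies $\dim_{alg}(\bar{b}/B)<k$.
\end{quote}
To prove (*), take a $B$-definable cell decomposition of $Y$. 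Each cell of dimension $<k$ is, after a suitable choice of coordinates, contained in the graph of a $B$-definable function of the remaining coordinates. Hence some coordinate $b_i$ lies in $\mathrm{dcl}(B\cup\{b_j:j\neq i\})$, which gives $\dim_{alg}(\bar{b}/B)\leq k-1$.

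For the inequality $\dim_{alg}\leq\dim_{geo}$, let $\bar{a}\in X$ with $\dim_{alg}(\bar{a}/B)=k$. Choose a subtuple $\bar{a}'$ of length $k$ with $\bar{a}\subseteq\mathrm{dcl}(B\cup\bar{a}')$; by minimality of $k$ we have $\dim_{alg}(\bar{a}'/B)=k$. Let $\pi$ be the coordinate projection onto the coordinates of $\bar{a}'$. Then $\pi(X)$ is $B$-definable and contains $\bar{a}'$. If $\pi(X)$ had empty interior, then $\dim_{geo}(\pi(X))<k$, and (*) would contradict $\dim_{alg}(\bar{a}'/B)=k$. So $\pi(X)$ has nonempty interior in $\mathcal{M}^k$, and therefore $\dim_{geo}(X)\geq k$.

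For the inequality $\dim_{alg}\geq\dim_{geo}$, suppose some projection $\pi(X)\subseteq\mathcal{M}^k$ contains an open box $U$. I will first find $\bar{b}\in U$ with $\dim_{alg}(\bar{b}/B)=k$, choosing the coordinates one at a time. I pick $b_1$ in the appropriate open interval outside every finite $B$-definable subset of $\mathcal{M}$. Then, inductively, I pick $b_{i+1}$ in its interval outside every finite $B\cup\{b_1,\dots,b_i\}$-definable set. Each such step is a realization of a finitely satisfiable partial type, which is where saturation enters. Such a $\bar{b}$ has no coordinate in the dcl of the others together with $B$, by the exchange property in o-minimal structures, so $\dim_{alg}(\bar{b}/B)=k$. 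Finally, since $\bar{b}\in\pi(X)$, I choose $\bar{a}\in X$ with $\pi(\bar{a})=\bar{b}$. Then $\dim_{alg}(\bar{a}/B)\geq\dim_{alg}(\bar{b}/B)=k$, because any subtuple of $\bar{a}$ generating $\bar{a}$ over $B$ also generates $\bar{b}$, and $\dim_{alg}$ is monotone under dcl via exchange.

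The main obstacle is this realization step. The language of $\mathbb{R}_{an,\exp}$ has continuum many symbols, so $\mathrm{dcl}(B)$ need not be countable, and plain $\omega_1$-saturation does not obviously suffice to avoid all $B$-definable finite sets. I would first try to handle this by assuming $\mathcal{M}$ is $|B|+|L|$-saturated, i.e.\ reading ``$\omega_1$-saturated'' as saturated over the relevant parameter sets. Alternatively, I would replace $\mathbb{R}_{an,\exp}$ by a countable reduct containing the finitely many symbols defining $X$, noting that the dimensions in question are computed in that reduct.
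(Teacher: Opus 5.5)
Your two inequalities are correct and follow the standard argument behind \cite[Lemma~1.4]{pillay1988groups}, which the paper cites without proof. The bound $\dim_{alg}^{\mathcal M}(X)\le\dim_{geo}(X)$ via (*) uses only $B$-definable cell decomposition and needs no saturation. The reverse bound, via a point of an open box in $\pi(X)$ that is generic over $B$, lifted to $X$ and compared using exchange, is exactly how it goes.

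What needs fixing is your final paragraph: the diagnosis is off, and one of your two remedies fails.

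\emph{The obstacle is not real.} Saturation is measured by the size of the parameter set, not by $|L|$ or $|\mathrm{dcl}(B)|$. At stage $i+1$ you must realize
\[
\{u_{i+1}<x<v_{i+1}\}\cup\{\neg\phi(x):\ \phi\in L(B\cup\{b_1,\dots,b_i\}),\ \phi(\mathcal{M})\text{ finite}\}.
\]
This is a finitely satisfiable set of formulas with parameters in $B\cup\{u_{i+1},v_{i+1},b_1,\dots,b_i\}$. An $\omega_1$-saturated $\mathcal M$ realizes it, in the usual sense that every type over a countable parameter set is realized however many formulas it contains; such extensions exist for arbitrary $L$. This works provided $B$ is countable.

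\emph{The real hypothesis is that $B$ is countable.} This condition is missing from the statement as the paper phrases it. For $B=\mathcal M$, every $\bar a$ has $\dim_{alg}(\bar a/\mathcal M)=0$, while $\dim_{geo}(\mathcal M)=1$. So you should assume $B$ countable and keep $\omega_1$-saturation, rather than strengthen the hypothesis to $|B|+|L|$-saturation.

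\emph{The countable-reduct fix does not work.} Passing to a countable reduct $L_0$ shrinks $\mathrm{dcl}$, so $\dim_{alg}$ computed in $L_0$ can exceed the one for the full language. For example, every standard real is $\emptyset$-definable in $\mathbb{R}_{an,\exp}$ but not in a countable reduct. Hence a point generic over $B$ for $L_0$ need not be generic for $L$. The reduct only gives back $\dim_{alg}^{\mathcal M}(X)\le\dim_{geo}(X)$, which you already have without saturation.
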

   \begin{definition}
       \cite[Definition~2.4]{peterzil2008complex}
Let $f:X\to Y$ be a function where $X \subseteq \mathbb{R}^n$ and $Y \subseteq \mathbb{R}^k$.

Given $b\in Y$, we say that $f$ is \textit{bounded over $b$} if, in the Euclidean topology,
there is a neighborhood $W \subseteq Y$ of $b$ such that $f^{-1}(W)$ is a bounded subset of $\mathbb{R}^n$.
   
    Given $k,n\in\mathbb{N}$ with $k\leq n$,
    fix a  parametrization of $Gr(k,n)$ definable in $\mathbb{R}_{an,\exp}$.
    In an $\omega_1$-saturated extension of $\mathbb{R}_{an,\exp}$,
    given $B\subseteq \mathcal{M}$ and a $k$-subspace $L$,
    we say that \textit{$L$ is generic over $B$} if the 
    parameter representing $L$ in $Gr(k,n)$ is a generic point of $Gr(k,n)$ over $B$.
    \end{definition}
\begin{fact}\label{projprop}
    \cite[Lemma~2.13]{peterzil2008complex}
    Let $\mathcal{M}$ be an $\omega_1$-saturated extension of $\mathbb{R}_{an,\exp}$.
    Let $S \subseteq \mathcal{M}^n$ be a  locally closed set which is  
    $B$-definable in $\mathcal{M}$ and
     $\dim_{geo}(S)=k < n$.
    Let $L$ be a  $k$-dimensional $\mathcal{M}$-subspace of $\mathcal{M}^n$ generic over $B$.
    Let $\pi : \mathcal{M}^n \to L$ be the orthogonal projection. 
    Then for all $y \in L$, $\pi|_A$ is bounded over $y$.

\end{fact}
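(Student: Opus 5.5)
The idea is to recast unboundedness of $\pi|_S$ over a point as a condition on the directions in which $S$ escapes to infinity, and then show that this condition cuts out a set of dimension smaller than $\dim Gr(k,n)$. A generic $L$ avoids such a set. (I read $\pi|_A$ in the statement as $\pi|_S$.) Throughout I work in $\mathcal{M}$. Since $\mathcal{M}\succ\mathbb{R}_{an,\exp}$ is o-minimal, curve selection, the frontier inequality $\dim(\overline{X}\setminus X)<\dim X$, and additivity of dimension in definable families are all available. By the Fact quoted above, $\dim_{geo}$ agrees with $\dim_{alg}$ in $\mathcal{M}$.

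\textbf{Step 1: directions at infinity.} Let
\[
T=\{(x/\|x\|,\,1/\|x\|): x\in S,\ x\neq 0\}\subseteq S^{n-1}\times(0,\infty).
\]
$T$ is $B$-definable and is the image of $S\setminus\{0\}$ under a definable injection, so $\dim T\le k$. Define $D=\{v\in S^{n-1}:(v,0)\in\overline{T}\}$. Then $D\times\{0\}\subseteq \overline{T}\setminus T$, so by the frontier inequality $\dim D\le k-1$. $D$ is also $B$-definable.

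\textbf{Step 2: unboundedness forces a direction in $L^\perp$.} Suppose $\pi|_S$ is not bounded over some $y\in L$. Then for every ball $W$ around $y$, the set $\pi^{-1}(W)\cap S$ is unbounded. By definable choice and curve selection, applied to the definable set $\{(x,t)\in S\times\mathcal{M}: \|\pi(x)-y\|<1/t,\ \|x\|>t\}$, I get a definable curve $\gamma(t)$ in $S$ with $\|\gamma(t)\|\to\infty$ and $\pi(\gamma(t))\to y$. By o-minimality, $\gamma(t)/\|\gamma(t)\|$ has a limit $v\in S^{n-1}$, and $v\in D$. Since $\pi(\gamma(t))/\|\gamma(t)\|\to 0$ and $\pi$ is linear, $\pi(v)=0$, that is, $v\in D\cap L^\perp$.

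\textbf{Step 3: dimension count (the main point).} Let $\mathcal{B}=\{L\in Gr(k,n): L^\perp\cap D\neq\emptyset\}$. This set is $B$-definable. Consider the incidence set
\[
I=\{(v,L)\in D\times Gr(k,n): v\perp L\}.
\]
For a fixed $v$, the fiber is $\{L\subseteq v^\perp\}\cong Gr(k,n-1)$, which has dimension $k(n-1-k)$. Hence
\[
\dim I\le (k-1)+k(n-1-k)=k(n-k)-1<\dim Gr(k,n).
\]
Since $\mathcal{B}$ is a projection of $I$, we get $\dim\mathcal{B}<\dim Gr(k,n)$. A generic point of $Gr(k,n)$ over $B$ has $\dim_{alg}=\dim Gr(k,n)$, so it cannot lie in the $B$-definable set $\mathcal{B}$; membership would force $\dim_{alg}\le\dim\mathcal{B}$. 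Therefore $L\notin\mathcal{B}$, which contradicts Step 2. So $\pi|_S$ is bounded over every $y\in L$.

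The step needing the most care is Step 3. One has to check that the incidence dimension really drops below $k(n-k)$, which uses the bound $\dim D\le k-1$ from Step 1 essentially. One also has to make the genericity argument rigorous with respect to the parametrization of $Gr(k,n)$. The latter requires that the parametrization be $\emptyset$-definable, so that $\mathcal{B}$ is $B$-definable in the chosen coordinates. Local closedness of $S$ does not appear to be needed for this argument; in the application it only guarantees good behaviour of the fibers.
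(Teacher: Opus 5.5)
Your proof is correct. The paper gives no argument of its own for this fact; its proof only says to ``repeat the proof in \cite{peterzil2008complex}.'' Your write-up therefore supplies what the paper defers, and its three steps all go through. The set $D$ of directions at infinity has dimension at most $k-1$ by the frontier inequality. Failure of boundedness over some $y\in L$ produces, via definable choice and limits of bounded definable curves, a direction $v\in D\cap L^{\perp}$. The incidence count $\dim I\le (k-1)+k(n-1-k)<k(n-k)$ then shows that a $B$-generic $L$ cannot have $L^{\perp}$ meeting $D$.

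Compared with the bare citation, your version makes explicit that only o-minimal dimension theory in $\mathcal{M}$ is used. It also shows that local closedness of $S$ plays no role in this boundedness conclusion, and that $\omega_1$-saturation is needed only to guarantee that a generic $L$ exists.

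Two small remarks:
\begin{itemize}
\item Curve selection is not needed in Step 2. Definable choice applied to the nonempty fibres $X_t$ already gives $\gamma$.
\item Your worry about the parametrization of $Gr(k,n)$ is harmless here. In the language of $\mathbb{R}_{an}$ every real number is $\emptyset$-definable, since constant functions are restricted analytic. Hence any parametrization definable in $\mathbb{R}_{an,\exp}$ is $\emptyset$-definable, and your $\mathcal{B}$ is indeed $B$-definable.
\end{itemize}
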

\begin{proof}
    Repeat the proof in  \cite{peterzil2008complex}. 
\end{proof}
\begin{corollary}\label{gengr}
     Let $S \subseteq \mathbb{R}^n$ be a  locally closed set which is  
    $B$-definable in $\mathbb{R}_{an,\exp}$ and $\dim_{geo}(S)=k < n$.
    Then for the set 
    \[H:=\{L\in Gr(k,n): \forall y\in L\, \,\pi_L:S\to L\text{ is bounded over $y$}\},
    \]
    $\dim_{geo}(Gr(k,n)\setminus H)<\dim_{geo}(Gr(k,n))$.
\end{corollary}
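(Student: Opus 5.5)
We derive Corollary \ref{gengr} from Fact \ref{projprop} by a standard transfer argument through an $\omega_1$-saturated elementary extension. Fix the $\mathbb{R}_{an,\exp}$-definable parametrization of $Gr(k,n)$ used in the definition of genericity, and view $H$ and its complement as subsets of the parameter space. First, $H$ is $B$-definable in $\mathbb{R}_{an,\exp}$. The condition ``$\pi_L|_S$ is bounded over $y$'' says: there is $\varepsilon>0$ and $R>0$ such that every $x\in S$ with $\|\pi_L(x)-y\|<\varepsilon$ satisfies $\|x\|<R$. This is first-order in the parameters $L,y$ and the parameters $B$ defining $S$. Quantifying $\forall y\in L$ therefore gives a $B$-definable set $H$, and $Gr(k,n)\setminus H$ is $B$-definable as well.

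Next, pass to an $\omega_1$-saturated elementary extension $\mathcal{M}\succ\mathbb{R}_{an,\exp}$, and let $H^{\mathcal{M}}$ be the realization in $\mathcal{M}$ of the same formula. Realizing the formula for $S$ in $\mathcal{M}$ gives $S^{\mathcal{M}}$. Local closedness is first-order expressible, so $S^{\mathcal{M}}$ is still locally closed. By the remark preceding the Fact on $\dim_{alg}=\dim_{geo}$, we have $\dim_{geo}(S^{\mathcal{M}})=k<n$. Hence Fact \ref{projprop} applies: every $L\in Gr(k,n)^{\mathcal{M}}$ generic over $B$ lies in $H^{\mathcal{M}}$.

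Suppose toward a contradiction that $\dim_{geo}(Gr(k,n)\setminus H)=\dim_{geo}(Gr(k,n))=:d$. The statement is about the standard model, but geometric dimension is preserved in $\mathcal{M}$, so $\dim_{geo}((Gr(k,n)\setminus H)^{\mathcal{M}})=d$. By the Fact of Pillay, $\dim_{alg}^{\mathcal{M}}$ of this set is $d$, so it contains a point $L$ with $\dim_{alg}(L/B)=d$. Such an $L$ is a generic point of $Gr(k,n)$ over $B$, because $\dim_{alg}^{\mathcal{M}}Gr(k,n)=d$. By the previous paragraph, $L\in H^{\mathcal{M}}$, which contradicts $L\notin H^{\mathcal{M}}$. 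Therefore $\dim_{geo}(Gr(k,n)\setminus H)<d$.

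The main obstacle is mostly bookkeeping about $B$. To apply Pillay's lemma and to get a genuine generic point, $B$ should be countable. We may take $B$ to be the finite set of parameters defining $S$, so this causes no difficulty. One must also check that the Grassmannian parametrization is $\emptyset$-definable, or at least definable over $B$, so that genericity over $B$ in $\mathcal{M}$ agrees with the notion in Fact \ref{projprop}. Finally, the conclusion of Fact \ref{projprop} (stated there for ``$\pi|_A$'') must be read as boundedness of $\pi|_{S}$ over every $y\in L$, which is exactly membership in $H^{\mathcal{M}}$.
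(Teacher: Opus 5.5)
Your proposal is correct and is essentially the paper's argument. The paper derives the corollary in one line from Fact~\ref{projprop} together with a lemma of Pillay saying that a $B$-definable subset of full dimension contains a point generic over $B$; this is exactly the transfer-and-contradiction you carry out via $\dim_{alg}=\dim_{geo}$ in an $\omega_1$-saturated extension, and your bookkeeping (definability of $H$ over $B$, preservation of local closedness and $\dim_{geo}$, and finiteness of the parameter set) fills in details the paper leaves implicit.
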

\begin{proof}
    It follows from Fact \ref{projprop} and \cite[Lemma~1.12]{pillay1986some}.
\end{proof}

\begin{corollary}\label{gengral}
     Let $S \subseteq \mathbb{R}^n$ be a  locally closed set which is  
    $B$-definable in $\mathbb{R}_{an,\exp}$ and $\dim_{geo}(S)=k < n$.
Let $H$ be as in Corollary \ref{gengr}.
    
    Then for the set 
    \[T:=\{A\in GL_n(\mathbb{R})
    :  A(\mathbb{R}^k)
    \in H\},
    \]
    $T$ is dense in $GL_n(\mathbb{R})$.
\end{corollary}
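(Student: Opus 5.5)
The plan is to realize $T$ as the preimage of $H$ under a continuous, open, surjective map from $GL_n(\mathbb{R})$ onto $Gr(k,n)$, and then pull density back along it. First, Corollary \ref{gengr} gives $\dim_{geo}(Gr(k,n)\setminus H)<\dim_{geo}(Gr(k,n))$. Since $Gr(k,n)$ is a compact manifold of dimension $k(n-k)$ and $H$ is definable in $\mathbb{R}_{an,\exp}$, the complement of $H$ contains no nonempty open subset of $Gr(k,n)$. The reason is that, in the fixed definable parametrization, a nonempty open set has full geometric dimension. Hence $H$ is dense in $Gr(k,n)$.

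Next, consider the map
\[
\Phi:GL_n(\mathbb{R})\to Gr(k,n),\qquad \Phi(A)=A(\mathbb{R}^k)=\mathrm{span}(Ae_1,\ldots,Ae_k),
\]
so that $T=\Phi^{-1}(H)$. The map $\Phi$ is continuous, since the span of the first $k$ columns depends continuously on $A$. It is also surjective: any basis of a $k$-subspace $L$ can be completed to a basis of $\mathbb{R}^n$, giving a matrix $A$ with $\Phi(A)=L$.

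The key point is that $\Phi$ is open. For this, write $GL_n(\mathbb{R})$ as a union of open charts. On the set where a given $k\times k$ minor of the first $k$ columns is invertible, the standard affine chart of $Gr(k,n)$ is the graph $\{\mathrm{span}\begin{pmatrix} I\\ X\end{pmatrix}\}$, after permuting coordinates. In that chart $\Phi(A)$ is given by $X=A_2A_1^{-1}$, where $A_1$ is the invertible $k\times k$ block and $A_2$ is the remaining $(n-k)\times k$ block. This map admits local continuous sections. Given $A$ and a nearby $X'$, the matrix $A'$ is obtained by replacing $A_2$ with $X'A_1$ and keeping the other columns fixed; invertibility of $A'$ persists for $X'$ near $X$, because invertibility is an open condition. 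So $\Phi$ is a submersion-like open map. Alternatively, $\Phi$ is the quotient map $GL_n(\mathbb{R})\to GL_n(\mathbb{R})/P$ by the parabolic stabilizer of $\mathbb{R}^k$, and quotient maps by group actions are open.

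To finish, let $U\subseteq GL_n(\mathbb{R})$ be nonempty and open. Then $\Phi(U)$ is a nonempty open subset of $Gr(k,n)$, so by density it meets $H$. Choose $A\in U$ with $\Phi(A)\in H$; then $A\in T\cap U$, and therefore $T$ is dense.

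The main obstacle is the first step: converting the dimension inequality of Corollary \ref{gengr} into density in the Euclidean topology. This needs the fact that the parametrization of $Gr(k,n)$ is a definable homeomorphism onto a set of geometric dimension $k(n-k)$ that is locally of full dimension at every point, for instance via the standard affine charts. With that fact, a nonempty open subset of $Gr(k,n)\setminus H$ would force that set to have dimension $k(n-k)$, contradicting Corollary \ref{gengr}.
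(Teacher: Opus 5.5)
Your proposal is correct and follows the paper's proof. The paper also writes $T=proj^{-1}(H)$ for the open map $A\mapsto A(\mathbb{R}^k)$, takes the density of $H$ from Corollary~\ref{gengr}, and concludes that the preimage of a dense set under an open map is dense. The only difference is that you spell out two steps the paper just asserts: why this map is open (local sections in affine charts, or the quotient by a subgroup), and why the dimension drop gives density.
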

\begin{proof}
(See e.g. \cite[Example~21.21, Example~1.36]{lee2012smooth} for the topological structure of $Gr(k,n)$ that makes it a homogeneous space.)

Consider the map $proj:GL_n(\mathbb{R})\to Gr(k,n)$ defined by $proj (A)=A(\mathbb{R}^k)$.
$proj$ is open, so the preimage of a dense subset of $ Gr(k,n)$ is dense in $GL_n(\mathbb{R})$.
By definition, $T=proj^{-1}(H)$,
 and by Corollary \ref{gengr},
 $H$ is dense in $Gr(k,n)$.
 Hence $T$ is dense in $GL_n(\mathbb{R})$.
 
\end{proof}

 \begin{lemma}\label{proppro}
     Let $S\subseteq\mathbb{R}^n$ be definable in 
     $\mathbb{R}_{an,\exp}$ and $L$ be a $k$-subspace of $\mathbb{R}^n$.
     Suppose that for all $w\in L$, the orthogonal projection $\pi_L:S\to L$ is bounded over $w$.
     Then for all $M\in\mathbb{R}$,
     $\sup\{\|y\|:\exists w\,\,(y,w)\in S,
     w\in L, \|w\|\leq M\}$ is a real number.
 \end{lemma}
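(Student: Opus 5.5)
We reduce the claim to a compactness argument. The notation $(y,w)\in S$ with $w\in L$ suggests coordinates adapted to the splitting $\mathbb{R}^n=L^\perp\oplus L$. Write each point of $\mathbb{R}^n$ as $(y,w)$ with $y\in L^\perp$ and $w\in L$, so that $\pi_L(y,w)=w$. The set in question is then
\[
Y_M:=\{y:\exists w\,\,(y,w)\in S,\ w\in L,\ \|w\|\leq M\}.
\]
It is the projection onto $L^\perp$ of $\pi_L|_S^{-1}(\overline{B}_M)$, where $\overline{B}_M:=\{w\in L:\|w\|\leq M\}$. Since projections do not increase norms, it suffices to show that $\pi_L|_S^{-1}(\overline{B}_M)$ is a bounded subset of $\mathbb{R}^n$. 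The supremum is then a real number, or $-\infty$ by convention if $Y_M$ is empty; in that degenerate case we read the claim as vacuous or take the supremum of the empty set to be $0$.

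First, for each $w\in\overline{B}_M$, the hypothesis gives an open neighborhood $W_w\subseteq L$ of $w$ such that $\pi_L|_S^{-1}(W_w)$ is bounded, say contained in a ball of radius $R_w$. Next, the sets $W_w$ cover $\overline{B}_M$, which is compact because $L$ is a finite-dimensional real subspace. So finitely many of them, $W_{w_1},\dots,W_{w_m}$, cover $\overline{B}_M$. Then
\[
\pi_L|_S^{-1}(\overline{B}_M)\subseteq\bigcup_{i=1}^m\pi_L|_S^{-1}(W_{w_i})
\]
is contained in the ball of radius $\max_i R_{w_i}$. Finally, every $y\in Y_M$ satisfies $\|y\|\leq\|(y,w)\|\leq\max_i R_{w_i}$, so the supremum is finite.

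There is no real obstacle here; definability in $\mathbb{R}_{an,\exp}$ is not even needed. The only point requiring care is interpreting the coordinates $(y,w)$ correctly as the orthogonal decomposition relative to $L$. For a general $L$ this means applying an orthogonal change of coordinates first, which preserves norms and hence boundedness.
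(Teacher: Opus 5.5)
Your proof is correct and follows the paper's argument. You cover the compact set $\{w\in L:\|w\|\leq M\}$ by the neighborhoods given by the boundedness hypothesis, extract a finite subcover, and bound the supremum by the largest of the finitely many radii; your side remarks (the orthogonal change of coordinates, the empty-set convention, and that definability is never used) are accurate points the paper leaves implicit.
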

 \begin{proof}
 Fix $M\in \mathbb{R}$.
For each $w\in L$ with $\|w\|\leq M$, since $\pi_L$ is bounded over $w$,
we can choose $U_w$  a neighborhood of $w$ such that $ \pi_L^{-1}(U_w)$ is a subset of $\mathbb{R}^n$ bounded by some $N_w\in\mathbb{R}$.
Since $\{w\in L:\|w\|\leq M\}$ is compact, there exist $w_1,...,w_t\in L$ such that \[
\{w\in L:\|w\|\leq M\}\subseteq U_{w_1}\cup\cdots\cup U_{w_t}.
\]
It follows that 
\[
\sup\{\|y\|:\exists w\,\,(y,w)\in S,w\in L, \|w\|\leq M\}\leq \max\{N_{w_1},...,N_{w_t}\}
\]is a real number.
 \end{proof}
 In the following lemma, given a definable cell, 
 {$\partial C$ denotes the boundary of $\overline{C}$ as a manifold with boundary, not the usual topological boundary.
 For example, in $\mathbb{R}^2$, the topological boundary of $C:=\{(t,0)
 \in \mathbb{R}^2:
 t\in(0,1)\}$} is the whole $\overline{C}$, but $\partial C$ is $\{(0,0), (1,0)\}$.
 
 \begin{lemma}\label{domfun}

Suppose that $F:(0,+\infty)\times
         S\to\mathbb{R}^{\geq 0}$ is  a definable  (not necessarily continuous) function in $\mathbb{R}_{an,\exp}$,
    where for each $t\in S$, $g_t:=F(-,t)$ is an increasing  (not necessarily continuous) function
from $(0, +\infty)$ to $\mathbb{R}^{\geq 0}$.

     Then there exist
     \begin{itemize}
         \item 
         an increasing definable (not necessarily continuous) function $g:(0,+\infty)\to \mathbb{R}^{\geq 0}$ ,
         \item a definable  (not necessarily continuous) function $h:  S\to \mathbb{R}^{\geq 0}$
     \end{itemize}
     such that for all $t\in S$
     and all $x$ with $x\geq h(t)$,
     $g(x)\geq g_t(x)$.
     i.e. $g$ eventually dominates all $g_t$ with $t\in S$.

     \end{lemma}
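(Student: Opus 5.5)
The plan is to take $g:=\exp_N$ for a single $N\in\mathbb{N}$ that works uniformly for the whole family $\{g_t\}_{t\in S}$. Then $h$ is the definable threshold beyond which $g_t$ stays below $\exp_N$. Concretely, once $N$ is fixed, set
\[
h(t):=\inf\{y>0:\ \forall x\geq y\ \ g_t(x)\leq \exp_N(x)\}.
\]
This is definable in $\mathbb{R}_{an,\exp}$ from $F$ and takes values in $\mathbb{R}^{\geq 0}$ once we know the set is nonempty for every $t\in S$. The function $g=\exp_N$ is increasing and definable, and for $x\geq h(t)$ we get $g(x)\geq g_t(x)$. One small caveat: at $x=h(t)$ itself, use $x>h(t)$, or replace $h$ by $h+1$, to avoid the boundary of the infimum. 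So everything reduces to the existence of a uniform $N$.

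For the uniform bound, I would use that the theory $T_{an,\exp}$ of $\mathbb{R}_{an,\exp}$ is exponentially bounded, by van den Dries--Macintyre--Marker. This means that in \emph{every} model $\mathcal{M}\models T_{an,\exp}$, every unary function definable with parameters in $\mathcal{M}$ is eventually bounded by some $\exp_n$. Suppose, for contradiction, that no $N$ works for the family. Then for each $N$ the $\emptyset$-definable (over the parameters of $F$) condition
\[
\theta_N(t):\quad t\in S\ \wedge\ \forall y\,\exists x\geq y\ \ F(x,t)>\exp_N(x)
\]
is satisfiable in $\mathbb{R}$. The conditions are nested: $\theta_{N+1}\to\theta_N$, since $\exp_N\leq\exp_{N+1}$ on $(0,+\infty)$. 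So the type $\{\theta_N(t):N\in\mathbb{N}\}$ is finitely satisfiable. By compactness, or by $\omega_1$-saturation as used earlier in this section, it is realized by some $t^*$ in an elementary extension $\mathcal{M}$. The unary function $F(-,t^*)$ is definable in $\mathcal{M}$ yet not eventually bounded by any standard $\exp_N$. This contradicts exponential boundedness of $T_{an,\exp}$ applied in $\mathcal{M}$, since $\exp_N$ for standard $N$ are exactly the comparison functions in that statement. Hence some $N$ satisfies $\neg\theta_N(t)$ for all $t\in S$, which is the uniform bound.

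The main obstacle is justifying the model-theoretic input. We need exponential boundedness of the \emph{theory}, that is, in nonstandard models with nonstandard parameters, and not just of the standard structure $\mathbb{R}_{an,\exp}$. I would cite van den Dries--Macintyre--Marker, or van den Dries--Miller, for $T_{an,\exp}$, where this is proved via the valuation/Hardy-field analysis and holds for all models. If one wanted to avoid that citation, the fallback would be a direct construction such as $g(x)=\sup\{g_t(x): t\in S,\ \|t\|\leq x\}$. However, this supremum can be $+\infty$: for $g_t(x)=x/(1-|t|)$ on $S=(-1,1)$ it is infinite for every $x$. So a reindexing $h$ would have to be built into the supremum, and I expect that to be messier than the uniform exponential bound above.
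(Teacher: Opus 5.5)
Your proof is correct, including the $h+1$ fix at the infimum, but it takes a genuinely different route from the paper.

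The paper uses no growth information about $\mathbb{R}_{an,\exp}$ at all. It takes a cell decomposition of $(0,+\infty)\times S$ on whose cells $F$ is continuous. For each $x$ it forms a compact set of parameters: those $t$ in a cell $\pi_S(C_i)$ with $\|t\|\le x$, at distance at least $1/x$ from $\partial\pi_S(C_i)$, and with $x\ge\kappa_i(t)$, so that $(x,t)\in C_i$. It then sets $g(x)$ to be the supremum of $F(x,\cdot)$ over this set. This is finite by continuity, and increasing because the sets grow with $x$ and each $g_t$ is increasing. Finally, $h(t)$ is the moment $t$ enters the exhaustion, namely $\max\{\|t\|,\kappa_i(t),1/d(t,\partial\pi_S(C_i))\}$. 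This is exactly your ``fallback'' with the reindexing built in: staying $1/x$ away from cell boundaries is what tames your example $x/(1-|t|)$.

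The trade-off is as follows.
\begin{itemize}
\item The paper's argument is elementary, self-contained, and valid in any o-minimal expansion of the real field. However, it gives no explicit rate for $g$, so in Lemma~\ref{indbdd} the paper must afterwards apply exponential boundedness of the standard structure to the single function $g$ to get $g\le\exp_s$.
\item Your argument produces $g=\exp_N$ directly, which is precisely what Lemma~\ref{indbdd} consumes. But it rests on exponential boundedness of $T_{an,\exp}$ in \emph{every} model, a nontrivial external result. By your own compactness argument, that all-models statement is equivalent to a bound uniform over definable families, i.e.\ to the strong form $g=\exp_N$ of this very lemma. So the real content is outsourced to the citation.
\end{itemize}
Conversely, the paper's lemma combined with exponential boundedness of $\mathbb{R}_{an,\exp}$ itself yields your uniform bound, so the two routes are consistent.
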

  \begin{proof}
Let  $C_1\cup\cdots\cup C_m$ be a cell decomposition of $(0,+\infty)\times S$ 
such that for each $i\in [m]$, $F$ is continuous on $C_i$.
Since we want to bound the $g_t$'s eventually,
we may assume that for all $i\in[m]$  there is a continuous function $\kappa_i:\pi_S(C_i)\to \mathbb{R}$
such that for all $x\in (0,+\infty)$, 
if $x\geq \kappa_i(t)$, then $(x,t)\in C_i$.
Also, by definition of cells, we may assume that for each $i\in[m]$, $\pi_S(C_i)$ is a cell.

For each $(i,x)\in[m]\times(0,+\infty)$, define  
\[
D_{i,x}:=
\{t\in \pi_S(C_i):
d(t, \partial  \pi_S(C_i))
\geq \frac{1}{x}, \|t\|\leq x\}.
\]
Each $D_{i,x}$ is equal to $\{t\in \overline{\pi_S(C_i)}:
d(t, \partial  \pi_S(C_i))
\geq \frac{1}{x}, \|t\|\leq x\}$, and hence is compact.

Define a function $g:(0,+\infty)\to \mathbb{R}$ by:
\[
g(x)=
 \sup \{F(x,t):t\in \bigcup_{i\in[m]}(D_{i,x}\cap\{t\in\pi_S(C_i):x\geq  \kappa_i(t)\}).
 \]
By continuity of $F$ on each $C_i$,
$ \sup \{F(x,t):t\in \bigcup_{i\in[m]}(D_{i,x}\cap\{t\in\pi_S(C_i):x\geq  \kappa_i(t)\})$ is a real number.
$g$ is increasing because for all $x<y$, all $i\in [m]$,
$D_{i,x}\subseteq D_{i,y}$,
and because each $g_t$ is increasing.

Define $h:S\to\mathbb{R}^{\geq 0}$ by \[
h(t)=
\max\{\|t\|,
 \max_i\{\kappa_i(t):t\in\pi_S(C_i)\},
\frac{1}{{\min_i}\{
d(t, \partial  \pi_S(C_i)):
t\in  \pi_S(C_i)\}}
\}.
\]
Notice that if $t\in \pi_S(C_i)$, then $d(t, \partial\pi_S(C_i))\neq 0$ since $\pi_S(C_i)$ is a cell.

Now we check that $g(x)\geq g_t(x)$ for all $t\in S$ and all $x$ with $x\geq h(t)$.
Fix $t_0\in S$ and  $x$ with $x\geq h(t_0)$.
Since \[
x\geq \max\{\|t_0\|,
 \max_i\{\kappa_i(t_0):t_0\in\pi_S(C_i)\},
\frac{1}{{\min_i}\{
d(t_0, \partial  \pi_S(C_i)):
t_0\in  \pi_S(C_i)\}}
\},
\]
we have that
 $t_0\in \underset{i\in[m]}{\bigcup}(D_{i,x}\cap\{t\in\pi_S(C_i):x\geq  \kappa_i(t)\})$.
By the definition of $g$,
$g(x)\geq F(x,t)=g_t(x)$.
     \end{proof}
     
 \subsection{The Equation System and Related Sets}
 \indent 
 
We will consider systems of the following form.
 \begin{equation}
 \begin{cases}\label{gensys}
     P_1(X_1,...,X_n, l_1,..,l_{n+1},\epsilon_1,...,\epsilon_n, \delta,
     \varphi_1,...,\varphi_k,\psi_1,...,\psi_u
     )=0
     \\...\\...\\...\\
P_n(X_1,...,X_n,l_1,..,l_{n+1},\epsilon_1,...,\epsilon_n,\delta,
     \varphi
     _1,...,\varphi_k,
\psi_1,...,\psi_u)=0,
 \end{cases}
 \end{equation}
 where
 \begin{itemize}
     \item $P_1,...,P_n$ are  smooth function with domain $\mathbb{R}^{3n+k+u+2}$ definable in $\mathbb{R}_{an,\exp}$,
     \item $(\bar{l},\bar{\epsilon},\delta)\in[-1,1]^{2n+2}$ are constants,
 \item $\varphi_i$'s, $\psi_i$'s are $\varphi$-monomials, $\varphi'$-monomials resp..
 \end{itemize}

Given a function $P(\overline{X},\overline{W})=(P_1(\overline{X},\overline{W}),...,P_n(\overline{X},\overline{W}))$ definable in $\mathbb{R}_{an,\exp}$ as in (\ref{gensys}),
where $\overline{W}$ are variables that substitute for $(\bar{l},\bar{\epsilon},\delta,\overline{\varphi},\overline{\psi})$,
define
\begin{align}\label{gset}
G=\{(A,\overline{\eta})
    \in
    GL_m(\mathbb{R})\times[-1,1]^{n}:
    \forall w\in \mathbb{R}^{2n+k+u+2} \,\,\notag\\
    \pi_w:Z(P\circ A-\overline{\eta})\to \mathbb{R}^{2n+k+u+2}\text{ is bounded over $w$}\},
\end{align}
 where $\pi_w$ is the projection onto the $w$-coordinate, and $m=3n+k+u+2$;
 
define $F:(0,+\infty)\times G\to\mathbb{R}^{\geq 0}$ by
\begin{align}\label{fmap}
   F(x,A,\overline{\eta})=\sup\{\|y\|:\exists w\,\,\|w\|\leq x\text{ and } 
   (y,w)\in Z(P\circ A-\overline{\eta})
   \};
\end{align}
and define
 \begin{align}\label{rset}
R&=\{(A,\overline{\eta},\overline{l},\overline{\epsilon},\delta)
    \in
 G\times[-1,1]^{2n+2}:\notag
 \\
&P\circ A(\overline{X},\overline{l},
\overline{\epsilon},
\delta,
\overline{\varphi},
\overline{\psi}
)
-\overline{\eta}
 \text{ has non-singular zeros only}\}.
\end{align}
{This $R$ is definable in $\mathbb{R}_{an,\exp, \varphi}$.

Notice that in (\ref{fmap}), $F$ is well-defined,
since for each $(x,A,\overline{\eta})\in (0,+\infty)\times G$,
by Lemma \ref{proppro}, $F(x,A,\overline{\eta})$ is a real number.

\begin{outline}
 Given a system of the form 
 in (\ref{gensys}),
if for  $P=(P_1,...,P_n)$, the function \[ 
g(x)=\sup\{\|y\|:\exists w\,\|w\|\leq x, (y,w)=p \text{ for some }p\in Z(P) \}
\]
is bounded by $\exp_s$
 for some $s$,
 then  because $|\varphi|$ is eventually dominated by all $\log_n$,
 the zeroes of $P$ are contained in a compact set and 
 hence the  $\varphi$-monomials are reduced to lower complexity.
 For an arbitrary $P$, there's no guarantee for this property.
 However, we can tilt the space a little so that  $P\circ A$ in place of $P$ has this property,
 where $A$ is a change of coordinate from the new coordinate to the original one.
 Moreover, by o-minimality of $\mathbb{R}_{an,\exp}$ , we can choose a path $\gamma:[0,1]\to GL_m(\mathbb{R})\times [-1,1]^{n}$, 
 $\gamma(t)=(A(t),\overline{\eta}(t))$,
 so that $A(0)=id$, $\overline{\eta}(0)=0$,
 and for all $t\in(0,1]$, $P\circ A(t)=\overline{\eta}(t)$ 
 has the boundedness property described above.
 So for each $t\in(0,1]$, there is a neighborhood $U$ of $t$ such that 
 the zeroes of (\ref{gensys}) with $P=0$ replaced by $P\circ A(t)=\overline{\eta}(t)$  are contained in a compact set.

 Using a connectedness argument, by Implicit Function Theorem,
 if there is a triangulation for 
$R$ defined in (\ref{rset})
 that is locally finite in $\overline{G}\times[-1,1]^{2n+2}$,
 then
we can show that the number of non-singular zeroes in (\ref{gensys}) is bounded by the number of non-singular zeroes in 
  (\ref{gensys}) with $P$ replaced by $P\circ A(c)$ for some $c>0$.
Moreover, our argument will show that this bound is independent of the parameters $l_1,..,l_n,\epsilon_1,...,\epsilon_n, ,\delta$.
 \end{outline}
We show some denseness property about  $G,R$ defined in (\ref{gset}), (\ref{rset}) resp..

\begin{lemma}\label{bddden}
    Given a function $P$ of the form in (\ref{gensys}), let $G$ be the set defined in (\ref{gset}).
    Let $m=3n+k+u+2$.
    
Then for the set   \[
S:=\{A\in GL_m(\mathbb{R}):
\dim_{geo}([-1,1]^{n}\setminus G_A)
= n\},
\]
$\dim_{geo}(S)<\dim_{geo}(GL_m(\mathbb{R}))$.
Here, $ G_A$ is the fiber $\{\overline{\eta}\in[-1,1]^{n}:(A,\overline{\eta})\in G\}$.
\\

In particular, $G$ is dense in $GL_m(\mathbb{R})\times[-1,1]^{n}$.
\end{lemma}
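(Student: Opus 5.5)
We will apply Corollary \ref{gengr} and Corollary \ref{gengral} fiberwise. Set $m=3n+k+u+2$ and, for $\overline{\eta}\in[-1,1]^n$, let $S_{\overline{\eta}}:=Z(P-\overline{\eta})\subseteq\mathbb{R}^m$. This set is closed, hence locally closed, and definable in $\mathbb{R}_{an,\exp}$ with parameter $\overline{\eta}$.

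\textbf{Reduction to the generic fiber.} By Sard's theorem applied to the smooth map $P(\cdot,\overline{W})$, or more simply by o-minimal dimension theory, the set of $\overline{\eta}$ for which $\dim_{geo}(S_{\overline{\eta}})>m-n$ has dimension $<n$. So we may discard a set of $\overline{\eta}$ of dimension $<n$ and assume $\dim_{geo}(S_{\overline{\eta}})\le m-n=2n+k+u+2$.

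\textbf{Linking the fiber condition to projections.} The condition $(A,\overline{\eta})\in G$ says that the projection $\pi_w$ of $Z(P\circ A-\overline{\eta})=A^{-1}(S_{\overline{\eta}})$ onto the last $m-n$ coordinates is bounded over every $w$. This is equivalent to a statement about $S_{\overline{\eta}}$ itself: its projection onto the $(m-n)$-dimensional subspace $A^{-T}$ of the coordinate subspace is bounded over every point. Composing with the inverse-transpose automorphism of $GL_m$, which preserves dimension, lets us phrase the condition through a map $A\mapsto L_A\in Gr(m-n,m)$. This map is open and definable, as in the proof of Corollary \ref{gengral}.

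There is a subtlety when $\dim_{geo}(S_{\overline{\eta}})<m-n$. In that case we first project onto a generic $\dim_{geo}(S_{\overline{\eta}})$-subspace contained in $L$, which is still bounded over points by Fact \ref{projprop}. Boundedness over points then passes to the larger subspace, since preimages of neighborhoods only shrink. Alternatively, the needed boundedness statement is a direct consequence of Fact \ref{projprop} for any generic subspace of dimension at least $\dim_{geo}(S_{\overline{\eta}})$. So Corollary \ref{gengr} gives, for each such $\overline{\eta}$, that the bad set
\[
B_{\overline{\eta}}:=\{A:(A,\overline{\eta})\notin G\}
\]
has $\dim_{geo}<\dim GL_m=m^2$.

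\textbf{Fubini for dimension.} Let $\mathcal{B}:=\{(A,\overline{\eta}):(A,\overline{\eta})\notin G\}\subseteq GL_m(\mathbb{R})\times[-1,1]^n$; this set is definable in $\mathbb{R}_{an,\exp}$. By the previous step every $\overline{\eta}$-fiber of $\mathcal{B}$ has dimension $<m^2$, except over a set of $\overline{\eta}$ of dimension $<n$. The additivity of dimension in o-minimal structures then gives $\dim\mathcal{B}<m^2+n$.

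Now apply the same additivity to the $A$-fibers. If $S=\{A:\dim(\mathcal{B}_A)=n\}$ had dimension $m^2$, then $\dim\mathcal{B}\ge m^2+n$, a contradiction. Hence $\dim_{geo}(S)<\dim_{geo}(GL_m(\mathbb{R}))$.

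\textbf{Density.} Density of $G$ follows directly. Since $\dim_{geo}(S)<m^2$, the complement of $S$ is dense in the open set $GL_m(\mathbb{R})$. For each $A\notin S$, the fiber $G_A$ has complement of dimension $<n$ in $[-1,1]^n$, so $G_A$ is dense in $[-1,1]^n$. Any open box $U\times V$ therefore meets $G$.

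\textbf{Main obstacle.} The delicate step is the second one: correctly matching the definition of $G$, which uses a fixed coordinate projection after the change of variables by $A$, with the genericity statement of Corollary \ref{gengr} about orthogonal projections onto subspaces. This has to be done uniformly in the parameter $\overline{\eta}$ and also in the case where the dimension of the zero set is smaller than $m-n$. I would handle it by noting that bounded-over-points is invariant under linear isomorphisms of the target. This lets us replace the orthogonal projection onto $L_A$ by the coordinate projection after $A$.
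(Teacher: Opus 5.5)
Your argument is correct and is essentially the paper's: both apply additivity of dimension to the complement of $G$ in $GL_m(\mathbb{R})\times[-1,1]^n$ in both fiber directions, and combine it with Corollary \ref{gengral} applied fiberwise to $Z(P-\overline{\eta})$ (the paper argues by contradiction, you argue directly). Your version also fills in points the paper passes over: you discard the set of $\overline{\eta}$ of dimension $<n$ where $\dim_{geo} Z(P-\overline{\eta})>m-n$ (there the paper's claim that every $G_{\overline{\eta}}$ is dense can fail, e.g.\ $P\equiv 0$, $\overline{\eta}=\mathbf{0}$), you handle $\dim_{geo} Z(P-\overline{\eta})<m-n$, and you identify the relevant subspace as $A^{-T}$ of the coordinate subspace.
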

\begin{proof}
If $\dim_{geo}(S)=\dim_{geo}(GL_m(\mathbb{R}))$, then   \[
\dim_{geo}(
\bigcup_{A\in S}[-1,1]^{n}\setminus G_A)
=\dim_{geo}(GL_m(\mathbb{R}))+n.
\]  
So there is $\overline{\eta}$ such that for the fiber $G_{\overline{\eta}}=\{
A\in GL_m(\mathbb{R}):(A,\overline{\eta})\in G\}$, $\dim_{geo}(GL_m(\mathbb{R})
\setminus G_{\overline{\eta}})=
\dim_{geo}(GL_m(\mathbb{R}))
$.
But this is impossible because by Corollary \ref{gengral},
for each $\overline{\eta}$, the fiber $G_{\overline{\eta}}$ is dense in $GL_m(\mathbb{R})$.

\end{proof}

\begin{lemma}\label{subandenreg}
Given a system $P$ of the form (\ref{gensys}), let $G,R$ be defined in (\ref{gset}), (\ref{rset}) resp..
Let $m=3n+k+u+2$.

Then $R$  is dense in $G\times[-1,1]^{2n+2}$.

\end{lemma}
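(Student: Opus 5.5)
Fix $(A,\overline{\eta}_0,\overline{l},\overline{\epsilon},\delta)\in G\times[-1,1]^{2n+2}$ and a neighbourhood $U$ of it. The plan is to find a point of $R$ inside $U$ by perturbing only $\overline{\eta}$, using a parametric Sard argument; the remaining coordinates $A,\overline{l},\overline{\epsilon},\delta$ stay fixed. By Lemma \ref{bddden}, $G$ is dense and the bad fibres are small, so there are $A'$ near $A$ with $\dim_{geo}([-1,1]^n\setminus G_{A'})<n$. Since the boundedness condition in (\ref{gset}) need not be open, I will first check that I may replace $A$ by such an $A'$ inside $U$. Then $G_{A'}$ contains all $\overline{\eta}$ in $[-1,1]^n$ outside a set of dimension $<n$. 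Such a set is nowhere dense, because it is definable in $\mathbb{R}_{an,\exp}$ (all the data defining $G$ live there), so $G_{A'}$ is dense in $[-1,1]^n$.

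Now fix $A'$ and the constants $\overline{l},\overline{\epsilon},\delta$. Consider the map
\[
\Phi(\overline{X})=P\circ A'(\overline{X},\overline{l},\overline{\epsilon},\delta,\overline{\varphi}(\overline{X}),\overline{\psi}(\overline{X})),
\]
where the $\varphi$- and $\psi$-monomials are evaluated along $\overline{X}$. This is a smooth map $\mathbb{R}^n\to\mathbb{R}^n$: the $P_i$ are smooth, $\varphi$ is analytic, and the monomials are compositions of term-definable smooth functions. A zero of $\Phi-\overline{\eta}$ is singular exactly when $\overline{\eta}$ is a critical value of $\Phi$. By Sard's theorem the critical values of $\Phi$ have Lebesgue measure zero in $\mathbb{R}^n$. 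Hence the set of $\overline{\eta}\in[-1,1]^n$ for which $\Phi-\overline{\eta}$ has only non-singular zeros has full measure, and in particular it is dense. Note that no o-minimality of $\mathbb{R}_{an,\exp,\varphi}$ is needed here, only smoothness.

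The remaining step is to intersect the two sets. The set $G_{A'}$ has a complement of dimension $<n$, definable in $\mathbb{R}_{an,\exp}$, so this complement is a finite union of cells of dimension $<n$ and has measure zero. The regular-value set also has full measure. Therefore their intersection has full measure in $[-1,1]^n$ and is dense. Choosing $\overline{\eta}$ in this intersection near $\overline{\eta}_0$ gives $(A',\overline{\eta})\in G$ with $(A',\overline{\eta},\overline{l},\overline{\epsilon},\delta)\in R\cap U$.

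I expect the main obstacle to be the interplay between the regular-value condition and the substitution of the monomials. The variables $\overline{\varphi},\overline{\psi}$ are not independent coordinates but functions of $\overline{X}$, so "non-singular zero" in (\ref{rset}) must be read as non-singularity of the composite $\Phi$. I will fix this convention and apply Sard directly to $\Phi$, rather than to $P\circ A'$ on $\mathbb{R}^m$. A secondary issue is the possible non-openness of $G$. If it causes trouble, I would instead fix $A$ itself and use only that $G_A$ has a complement of dimension $<n$ for generic $A$. For non-generic $A$ I would first move to a nearby generic $A'$, which Lemma \ref{bddden} permits.
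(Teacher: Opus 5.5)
Your proposal is correct and is essentially the paper's proof: use Lemma \ref{bddden} to move $A$ to a nearby $A'$ with $\dim_{geo}([-1,1]^n\setminus G_{A'})<n$, then apply Sard's theorem to the composite map $\overline{X}\mapsto P\circ A'(\overline{X},\overline{l},\overline{\epsilon},\delta,\overline{\varphi},\overline{\psi})$ to pick $\overline{\eta}'$ near $\overline{\eta}$ that lies in $G_{A'}$ and is a regular value. Your measure-zero argument for intersecting these two sets only makes explicit a step the paper leaves implicit (it just notes that both sets are dense); you should drop your opening remark that $A$ stays fixed, since you go on to perturb $A$ to $A'$, which is correct.
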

\begin{proof}

Fix $(A,\overline{\eta},\overline{l},\overline{\epsilon},\delta)
    \in
 G\times[-1,1]^{2n+2}$ and $r>0$.
 By Lemma \ref{bddden}, there is $A'\in GL_m(\mathbb{R})$ such that  $\|A-A'\|<\dfrac{r}{2}$ and
 $\dim_{geo}([-1,1]^{n}\setminus G_{A'})
< n$.
By Sard's theorem, the set of regular values for $P\circ A'(\overline{X},\overline{l},
\overline{\epsilon},
\delta,
\overline{\varphi},
\overline{\psi}
)$
is dense in $[-1,1]^n$.
Hence,
 there exists
 $\overline{\eta}'\in[-1,1]^{n}$ such that $\|\overline{\eta}-\overline{\eta}'\|<\dfrac{r}{2}$,
 $(A',\overline{\eta}')\in G$,
 and 
 that $\overline{\eta}'$ is a regular value of $P\circ A'(\overline{X},\overline{l},
\overline{\epsilon},
\delta,
\overline{\varphi},
\overline{\psi}
)$.
So given any arbitrary $(A,\overline{\eta},\overline{l},\overline{\epsilon},\delta)
    \in
 G\times[-1,1]^{2n+2}$ and $r>0$,
  we can find some $(A',\overline{\eta}',\overline{l},\overline{\epsilon},\delta)
    \in R$ whose distance from $(A,\overline{\eta},\overline{l},\overline{\epsilon},\delta)
    $ is $<r$. 
    Hence $R$ is dense  in $G\times[-1,1]^{2n+2}$.

\end{proof}

\subsection{Finite \# of Non-singular Solutions}
\indent

Given $p\in \mathbb{R}^{n+k}$, and a $k$-dimensional subspace $L$, 
$p= p_{L^{\perp}}\oplus p_L$ where $p_L$ is the orthogonal projection of $p$ on $L$.
Let $[p]_{L}$ denote the coordinate representation of $p$ wtih respect to $L$.

Given a system $P(X_1,...,X_n,  W_1,...,W_k)=0$, and given a $k$-dimensional subspace $L$ which will correspond to $W_1,...,W_k$,
$p\in Z(P)$ iff
$P(A_L([p]_L))
=P\circ A_L([p]_L)=0$,
where $A_L$ is the change of coordinate from $L^\perp\oplus L$ to the original one.
It follows that for all $w\in \mathbb{R}^k$, and all $y\in \mathbb{R}^n$,
$(y,w)=[p]_L$ for some $p\in Z(P)$ iff $(y,w)\in Z(P\circ A_L)$.
Hence, if the orthogonal projection $\pi_L$ satisfies that
\[
\forall w\in L\, \,\pi_L:Z(P)\to L\text{ is bounded over $w$},\]
then \[
 \forall w\in \mathbb{R}^{k} \,\,\notag\\
    \pi:Z(P\circ A_L)\to \mathbb{R}^{k}\text{ is bounded over $w$}.
\]

\begin{lemma}\label{indbdd}
    Given a system $P$ of the form (\ref{gensys}), let $G$ be defined as in (\ref{gset}).
    
    Then
    \begin{enumerate}
\item there is a cell decomposition $D_1\cup\cdots\cup D_q$ of $G$ in $\mathbb{R}_{an,\exp}$ such that 
given $i\in[q]$ and  $(A_0,\overline{\eta}_0)\in D_i$,
there exist an open neighborhood $U$ of $(A_0,\overline{\eta}_0)$ in $D_i$ and $r\in\mathbb{R}$ such that
for all $(\overline{l},
\overline{\epsilon},
\delta)
\in[-1,1]^{2n+2}$,
\[
\bigcup_{(A,\overline{\eta})\in U}
Z( P\circ A(\overline{X},\overline{l},
\overline{\epsilon},
\delta,
\overline{\varphi},
\overline{\psi})
-\overline{\eta})
\subseteq B_{\leq r}(\mathbf{0});
\] 
\item for each $(A,\overline{\eta})\in G$,
$Z(P\circ A(\overline{X},\overline{l},
\overline{\epsilon},\delta,
\overline{\varphi},
\overline{\psi})
-\overline{\eta})\subseteq Z(Q)$ for some function $Q$ definable in $\mathbb{R}_{an,\exp,\varphi}$ with lower $fcpx$.
\item   Suppose that $D_1\cup\cdots\cup D_q$ is a cell decomposition in 1..
Given $i\in[q]$ and $\sigma:[0,1]\to D_i\times[-1,1]^{2n+2}$ a continuous path,
there is $r\in \mathbb{R}$ such that for all $t\in[0,1]$,
\[Z(P\circ A(t)(\overline{X},
\overline{l}(t),\overline{\epsilon}(t),
\delta(t),
\overline{\varphi},
\overline{\psi}
)
-\overline{\eta}(t)
)\] is contained in the ball $B_{\leq r}(\mathbf{0})$.
    \end{enumerate}
\end{lemma}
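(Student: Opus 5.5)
The plan is to reduce everything to the function $F$ of (\ref{fmap}) together with the growth bound of Lemma \ref{mfprop} and the slow growth of $\varphi$ from Proposition \ref{addpsi}.

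\textbf{Part 1.} For $(A,\overline{\eta})\in G$, the map $x\mapsto F(x,A,\overline{\eta})$ is increasing and real-valued by Lemma \ref{proppro}. It is also definable in $\mathbb{R}_{an,\exp}$, since $P$ is. Lemma \ref{domfun}, applied with $S=G$, gives an increasing definable $g:(0,+\infty)\to\mathbb{R}^{\geq 0}$ and a definable $h:G\to\mathbb{R}^{\geq0}$ with $F(x,A,\overline{\eta})\leq g(x)$ for $x\geq h(A,\overline{\eta})$.

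Next I take a cell decomposition $D_1\cup\cdots\cup D_q$ of $G$ on whose cells $h$ is continuous. Then $h$ is bounded, say by $h_0$, on a small compact neighborhood $U\subseteq D_i$ of a given point $(A_0,\overline{\eta}_0)$.

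By polynomial boundedness of growth in $\mathbb{R}_{an,\exp}$ (Miller's dichotomy / exponential boundedness), $g(x)\leq\exp_s(x)$ for large $x$, for some $s$. Now let $(y,w)$ be a zero of $P\circ A(\ldots)-\overline{\eta}$ with $(A,\overline{\eta})\in U$. Here $w=(\bar l,\bar\epsilon,\delta,\overline{\varphi}(y'),\overline{\psi}(y'))$, where $y'$ is the relevant coordinate vector. The constants are bounded by $1$. The monomials $\varphi\circ f$ satisfy $|\varphi(f(y'))|\leq \log_{s+1}(\exp_{s'}\|y'\|)$ for large arguments, by Lemma \ref{mfprop} and Proposition \ref{addpsi}(4), after choosing the index large. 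Proposition \ref{addpsi}(3),(5) handles $\varphi'$ similarly, and $\varphi$ is bounded on $(-\infty,0]$.

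So $\|w\|\leq C+\log_{s+1}\|y\|$ roughly. Then $\|y\|\leq F(\|w\|+h_0,A,\overline{\eta})\leq \exp_s(\|w\|+h_0)$. This forces $\|y\|\lesssim\exp_s(h_0+C+\log_{s+1}\|y\|)$, which fails for $\|y\|$ large. Hence $\|y\|\leq r_0$, and then $\|w\|$ is also bounded. This yields the ball $B_{\leq r}(\mathbf 0)$, uniformly in $(\bar l,\bar\epsilon,\delta)$.

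The delicate point is the bookkeeping of coordinates under $A$. The ``$y$'' in $Z(P\circ A)$ are transformed coordinates, so the arguments of the monomials are linear images of the zero. Since $A$ ranges over the compact set $\overline U\subseteq GL_m$, the norms $\|A\|$ and $\|A^{-1}\|$ are bounded there, and this only changes constants. This estimate is the main obstacle.

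\textbf{Part 2.} By Part 1 the zero set lies in a compact ball $B$. On $B$ each $\varphi$-monomial $\varphi\circ f$ has argument $f$ confined to a compact interval $I$. There, $\varphi|_I$ and $\varphi'|_I$ are restricted analytic functions, hence definable in $\mathbb{R}_{an,\exp}$, and they drop formal complexity by one at the outermost layer. Replacing each outermost $\varphi,\varphi'$ in this way gives $Q$ with lower $fcpx$. I extend $Q$ to all of $\mathbb{R}^n$, for instance by adding a term such as $\max(0,\|X\|^2-r^2)$ so that no zeros occur outside $B$, and obtain $Z(\cdot)\subseteq Z(Q)$.

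\textbf{Part 3.} The image $\sigma([0,1])$ is compact and lies in $D_i\times[-1,1]^{2n+2}$. Its projection to $D_i$ is covered by finitely many neighborhoods $U$ from Part 1. Each $U$ gives a radius that works for all $(\bar l,\bar\epsilon,\delta)$, and I take the maximum of these finitely many radii.
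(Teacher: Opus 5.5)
Your proposal is correct and follows the paper's proof essentially step for step. The shared steps are: Lemma \ref{domfun} applied to $F$; a cell decomposition on which $h$ is continuous; an $\exp_s$ bound on $g$, which comes from exponential (not polynomial) boundedness of $\mathbb{R}_{an,\exp}$; the slow growth of $\varphi$ and boundedness of $\varphi'$; replacing the outer $\varphi,\varphi'$ by functions agreeing on a compact interval; and compactness for part 3. The only real difference is that you solve the key inequality for $\|y\|$, while the paper argues by contradiction on $\|w\|$ via its conditions iii--iv.

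Two minor remarks. First, the coordinate bookkeeping you call the main obstacle does not arise, since in (\ref{fmap}) and (\ref{rset}) the monomials are evaluated at $y$ itself and substituted into $P\circ A$. Second, in part 2 the term $\max(0,\|X\|^2-r^2)$ does not rule out zeros of a system outside $B$, and it is not needed for the inclusion. The paper instead uses a smooth $\mathbb{R}_{an,\exp}$-definable $\alpha$ (resp.\ $\gamma$) extending $\varphi|_{[-D,D]}$ (resp.\ $\varphi'|_{[-D,D]}$), which keeps $Q$ a smooth system of the form (\ref{gensys}), as the induction in Theorem \ref{rtb} requires.
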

\begin{proof}

Write each $\varphi$-monomial $\varphi_i$ as $\varphi\circ f_i$, where $f_i$ is term-definable in $\mathbb{R}_{an,\exp,\varphi}$;
write each $\varphi'$-monomial $\psi_j$ as $\varphi'\circ g_j$, where $g_j$ is term-definable in $\mathbb{R}_{an,\exp,\varphi}$.

Let $F:(0,+\infty)\times G\to\mathbb{R}^{\geq 0}$ be defined as in (\ref{fmap}) and let $g_{A,\overline{\eta}}$ be $F(-,A,\overline{\eta})$.
$F$ is definable in $\mathbb{R}_{an,\exp}$
because $P$ is definable in  $\mathbb{R}_{an,\exp}$.
Applying Lemma \ref{domfun} to $F$, we get functions $g:(0,+\infty)\to \mathbb{R}^{\geq 0}$ ,
         $h:  G\to \mathbb{R}^{\geq 0}$
  definable in $\mathbb{R}_{an,\exp}$   such that for all $(A,\overline{\eta})
     \in G$,
     for all $x$ with $x\geq h(A,\overline{\eta})$,
     $g(x)\geq g_{A,\overline{\eta}}(x)$.
Take a cell decomposition $D_1\cup\cdots\cup D_q$ of $G$ such that for all $i\in[q]$, $h$ is continuous on $D_i$.

By properties of $\varphi,\varphi'$  and o-minimality of $\mathbb{R}_{an,\exp}$,
we can fix $
d\in\mathbb{R}$ and $s\in\mathbb{N}$ such that 
\begin{enumerate}
    \item [i.]
    for all 
$i\in[k]$ and all $x\in\mathbb{R}^n$, $|f_i(x)|\leq \exp_s(\|x\|)$;
\item [ii.] for all $x\in (0,+\infty)$, $|g(x)|\leq \exp_s(x)$;
\item [iii.] for all $z$ with $z\geq d$, $k|\varphi(\exp_{2s}(z))|<\dfrac{z}{2}$;
\item [iv.] $d> 2\cdot(2n+2+u\cdot \underset{z\in\mathbb{R}}{\sup}|\varphi'(z)|)$.
(This is possible because $|\varphi'|$ is bounded on $\mathbb{R}$.)

\end{enumerate}

Given $(A_0,\overline{\eta}_0)\in G$,
we may assume that $(A_0,\overline{\eta}_0)\in D_1$.
Let $U$ be a neighborhood of $(A_0,\overline{\eta}_0)$ in $D_1$ with $\overline{U}\subseteq D_1$.
Then $\underset{(A',\overline{\eta}')\in U}{\sup}
h(A',\overline{\eta}')$  is a well-defined real number by continuity of $h$ on $\overline{U}$.
 Given $(A,\overline{\eta})\in U$   and 
 a zero  \(y\in Z(P\circ A(\overline{X},\overline{l},
\overline{\epsilon},
\delta,
\overline{\varphi},
\overline{\psi}
)-\overline{\eta})\),
 if ${w}=
(\overline{l},
\overline{\epsilon},
\delta,
\overline{\varphi}(y),
\overline{\psi}(y)
)$ 
satisfies that \[
\|w\|\geq \max\{
\underset{(A',\overline{\eta}')\in U}{\sup}
h(A',\overline{\eta}'),d\},
\]
then
 \begin{align}\label{bddy}
\|y\|\leq g_{(A,\overline{\eta})}(\|w\|)\leq g(\|w\|)
\qquad\text{(because $\|w\|\geq
\underset{(A',\overline{\eta}')\in U}{\sup}h(A',\overline{\eta}')$)},
\end{align}
and hence
\begin{align}
\label{plybdsol}
   &\|(\varphi\circ f_1(y),...,\varphi\circ f_k(y))\|\\\notag
   \leq& \sum_{i=1}^k|\varphi\circ f_i(y)|\\\notag
   \leq& k|\varphi 
   (\exp_s(\|y\|))| \qquad&\text{by i.}\\\notag
   \leq& k|\varphi(\exp_s(g(\|w\|)))|\qquad&
   \text{ by (\ref{bddy})}\\\notag
   \leq& k
   |\varphi(\exp_{2s}(\|w\|))|\qquad&
   \text{ by ii. }\\
   <&\dfrac{1}{2}\|w\|.\qquad&
   \text{ by iii. }\notag
\end{align}

But because $\|w\|\geq d$, by iv., this is impossible.
Hence,  for all $y \in
Z(P\circ A(\overline{X},\overline{l},
\overline{\epsilon},
\delta,
\overline{\varphi},
\overline{\psi}
)-\overline{\eta})$,
  $
\|\overline{l},
\overline{\epsilon},
\delta,
\overline{\varphi}(y),
\overline{\psi}(y)
\|
\leq\max\{\underset{(A',\overline{\eta}')\in U}{\sup} h(A',\overline{\eta}'),d\} $.

Also, because $g_{A,\overline{\eta}}$ and $g$ are increasing,
\begin{align*}
    \|y\|
    &\leq g_{A,\overline{\eta}}(\|
\overline{l},
\overline{\epsilon},
\delta,
\overline{\varphi}(y),
\overline{\psi}(y)
\|)
\\
&\leq g_{A,\overline{\eta}}(\max\{
\underset{(A',\overline{\eta}')\in U}{\sup}h(A',\overline{\eta}'),d\} )
\\
&\leq g(\max\{\underset{(A',\overline{\eta}')\in U}{\sup}h(A',\overline{\eta}'),d\} ).
\end{align*}

Hence, given $(A_0,\overline{\eta}_0)\in G$, we found
an open neighborhood $U\subseteq\overline{U}\subseteq D_1$
and a real number  $r=g(\max\{
\underset{(A',\overline{\eta}')\in U}{\sup}
h(A',\overline{\eta}'),d\})$  such that for all $(\overline{l},
\overline{\epsilon},
\delta)
\in[-1,1]^{2n+2}$,
\[
\bigcup_{(A,\overline{\eta})\in U}
Z( P\circ A(\overline{X},\overline{l},
\overline{\epsilon},
\delta,
\overline{\varphi},
\overline{\psi})
-\overline{\eta})
\subseteq B_{\leq r}(\mathbf{0}).
\] 

For 2.,
given a fixed $(A,\overline{\eta})\in G$, by 1.,
$Z(P\circ A(\overline{X},\overline{l},
\overline{\epsilon},
\delta,
\overline{\varphi},
\overline{\psi})
-\overline{\eta})
\subseteq B_{\leq r}(\mathbf{0})$
for some $r\in\mathbb{R}$.
So the system is reduced to a system of lower complexity.
More precisely,
 since for all $y\in Z(P\circ A(\overline{X},\overline{l},
\overline{\epsilon},
\delta,
\overline{\varphi},
\overline{\psi})
-\overline{\eta})$,
$\|y\|\leq r$ for some $r\in\mathbb{R}$,
we may replace  each $\varphi$-monomial (resp. $\varphi'$-monomial) $\varphi\circ f_i$  (resp. $\varphi'\circ g_j$  )
by $\alpha\circ f_i$ (resp. $\gamma\circ g_j$) for some smooth $\alpha$ (resp. $\gamma$) definable in $\mathbb{R}_{an,\exp}$ with domain 
$\mathbb{R}$ that extends $\varphi|_{[-D,D]}$ (resp. $\varphi'|_{[-D,D]}$) for some well-chosen $D$.

For 3., apply 1. to each point of the compact set $\sigma([0,1])$.

\end{proof}

\begin{theorem}
    \label{rtb}
    Let  $P,R,G$ be defined as in (\ref{gensys}), (\ref{rset}), (\ref{gset}) resp..
    Fix a cell decomposition $D_1\cup\cdots\cup D_q$ of $G$ in 
  $\mathbb{R}_{an,\exp}$ satisfying 1. in  Lemma \ref{indbdd}.
  Suppose that  the set $R$ defined in (\ref{rset})
  has a triangulation $\{\Delta_\alpha\}$ such that 
  \begin{itemize}
      \item for all $(A,\overline{\eta},\overline{l},\overline{\epsilon},\delta)
  \in \overline{G}\times[-1,1]^{2n+2}$,
  there is a neighborhood of it that intersects finitely many $\Delta_\alpha$'s;
  \item for each $\alpha$, there is $i\in[q]$ such that $\Delta_\alpha\subseteq D_i\times [-1,1]^{2n+2}$.
  \end{itemize}
    
    Then
 there is $N\in\mathbb{N}$ such that\ignore{
  for all  $(\bar{l},\bar{\epsilon},\delta)\in[-1,1]^{2n+2}$ and all $(A,\overline{\eta})\in G$,
 the system \[
 P\circ A
 (\overline{X}, \overline{l},\overline{\epsilon},\delta,
\overline{\varphi},
\overline{\psi}
)
 =\overline{\eta}\]
 has $\leq N$ many non-singular solutions;}
  for all  $(\bar{l},\bar{\epsilon},\delta)\in[-1,1]^{2n+2}$,
 the system $P(\overline{X}, \overline{l},\overline{\epsilon},\delta,
\overline{\varphi},
\overline{\psi}
)=\mathbf{0}$ has $\leq N$ many non-singular zeroes.
\end{theorem}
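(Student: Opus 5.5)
We follow the strategy in the Outline: connect the original system to a tilted, bounded one by a path and count zeros via the Implicit Function Theorem. Fix $(\bar l,\bar\epsilon,\delta)\in[-1,1]^{2n+2}$ and suppose $y_1,\dots,y_M$ are distinct non-singular zeros of $P(\overline X,\bar l,\bar\epsilon,\delta,\overline\varphi,\overline\psi)$. The point $p_0=(\mathrm{id},\mathbf 0,\bar l,\bar\epsilon,\delta)$ lies in $\overline G\times[-1,1]^{2n+2}$, because $G$ is dense in $GL_m(\mathbb R)\times[-1,1]^n$ by Lemma \ref{bddden}. By the first hypothesis on the triangulation, some neighborhood $V$ of $p_0$ meets only finitely many simplices $\Delta_{\alpha_1},\dots,\Delta_{\alpha_t}$.

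Non-singularity lets us continue each zero. The map $(y,A,\overline\eta)\mapsto P\circ A(A^{-1}y,\dots)-\overline\eta$ is smooth in all variables; the $\varphi$- and $\varphi'$-monomials are analytic compositions. Hence by the Implicit Function Theorem there is a neighborhood $V'\subseteq V$ of $p_0$ on which each $y_j$ continues to a smooth family $y_j(p)$ of non-singular zeros of the perturbed system. These zeros stay pairwise distinct after shrinking $V'$, since they are isolated at $p_0$.

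Since $R$ is dense in $G\times[-1,1]^{2n+2}$ (Lemma \ref{subandenreg}) and $G\times[-1,1]^{2n+2}$ is dense near $p_0$, the set $V'$ meets $R$, and therefore meets some $\Delta_{\alpha_i}$. By the second hypothesis, $\Delta_{\alpha_i}\subseteq D_j\times[-1,1]^{2n+2}$ for some $j$. Any point $p\in\Delta_{\alpha_i}\cap V'$ therefore gives a system, lying in $R$, with at least $M$ non-singular zeros.

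The bound $N$ comes from the finitely many simplices. Define $N_\alpha$ as the supremum over $p\in\Delta_\alpha$ of the number of zeros of the system at $p$; all of these zeros are non-singular by the definition of $R$. We claim each $N_\alpha$ is finite, and we prove it by connectedness. $\Delta_\alpha$ is connected, and any two points of it are joined by a path $\sigma$ inside $D_i\times[-1,1]^{2n+2}$. By Lemma \ref{indbdd}(3), all zeros along $\sigma$ lie in a fixed ball $B_{\le r}(\mathbf 0)$. Zeros along $\sigma$ cannot escape to infinity, and because every zero is non-singular they cannot merge or appear. So by the Implicit Function Theorem and a compactness argument, the number of zeros is locally constant along $\sigma$, hence constant on $\Delta_\alpha$. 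That constant is finite, since non-singular zeros in a compact ball are finite in number.

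Therefore $M\le \max_{1\le i\le t}N_{\alpha_i}$. Uniformity in $(\bar l,\bar\epsilon,\delta)$ comes from the compactness of $\{\mathrm{id}\}\times\{\mathbf 0\}\times[-1,1]^{2n+2}\subseteq\overline G\times[-1,1]^{2n+2}$. Cover it by finitely many neighborhoods, each meeting only finitely many simplices, and take $N$ to be the maximum of the $N_\alpha$ over all simplices involved.

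The main obstacle is the constancy step. We must check that the implicit-function continuations along a path in $\Delta_\alpha$ exhaust all zeros, so that no zero enters from infinity; this is exactly what Lemma \ref{indbdd}(3) supplies. We must also handle the fact that the perturbation at $p_0$ uses the zeros of $P$ itself while $p_0$ need not lie in $G$. There we rely only on local continuation near $p_0$, and use the local finiteness hypothesis at points of $\overline G$ to land in a single simplex.
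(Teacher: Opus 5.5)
Your proof is correct and uses the same ingredients as the paper's proof: compactness of $\{(\mathrm{id},\mathbf 0)\}\times[-1,1]^{2n+2}$ with local finiteness of the triangulation, density of $R$, IFT continuation, and Lemma \ref{indbdd} together with non-singularity on $R$ to run a connectedness argument along paths, so it is essentially the argument of Claims \ref{appcl} and \ref{pbrt}. The differences are organizational: the paper runs one open--closed argument along a path from $p_0\in\overline F$ into an adjacent simplex $F$, whereas you take a local IFT step at $p_0$ and then show the zero count is constant on each simplex (take the connecting path inside $\Delta_\alpha\subseteq R$, not merely inside $D_i\times[-1,1]^{2n+2}$, so that non-singularity holds along it), and you get finiteness at a single point of $R$ directly from compactness and isolation of non-singular zeros rather than from the inductive hypothesis.
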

\begin{proof}(cf. proof of Bezout's theorem in  \cite[Lemma~11.5.1]{bochnak2013real})

 Let $P$ be a system of the form (\ref{gensys}), and let $G,R$ be defined as in (\ref{gset}), (\ref{rset}) resp..
\ignore{
We first show that 1. implies 2.
Let $m=3n+k+u+2$.
Since by Lemma \ref{bddden},
$G$ is dense in $GL_m(\mathbb{R})\times[-1,1]^{n}$,
we can choose $(A_i,\overline{\eta}_i)_{i\in\mathbb{N}}$  a sequence in $G$ converging to  $(id,\mathbf{0})$.
Given  $(\bar{l},\bar{\epsilon},\delta)\in[-1,1]^{2n+2}$,
since by 1., for all $i\in\mathbb{N}$,
$ P\circ A_i
 (\overline{X}, \overline{l},\overline{\epsilon},\delta,
\overline{\varphi},
\overline{\psi}
)=\overline{\eta}_i$ has $\leq N$ many non-singular zeroes,
by Implicit Function Theorem, 
$ P
 (\overline{X}, \overline{l},\overline{\epsilon},\delta,
\overline{\varphi},
\overline{\psi}
)=\mathbf{0}$ has $\leq N$ many non-singular zeroes.

Now we prove 1.}

By assumption,
we get a triangulation of $R$, say $\mathscr{R}$ such that for each $(id,\mathbf{0},\overline{l},\overline{\epsilon},\delta)
\in\{(id,\mathbf{0})\} \times[-1,1]^{2n+2}$, 
there is a neighborhood $U_{(id,\mathbf{0},\overline{l},\overline{\epsilon},\delta)}$
of  $(id,\mathbf{0},\overline{l},\overline{\epsilon},\delta)$
such that only finitely many members in $\mathscr{R}$ intersect $U_{(id,\mathbf{0},\overline{l},\overline{\epsilon},\delta)}$.
Since $\{(id,\mathbf{0}
)\}\times[-1,1]^{2n+2}$ is compact, there exist $(\overline{l}_1,\overline{\epsilon}_1,\delta_1),...,(\overline{l}_s,\overline{\epsilon}_s,\delta_s)\in[-1,1]^{2n+2} $ 
such that \[
\{(id,\mathbf{0})\}\times[-1,1]^{2n+2}\subseteq U_{(id,\mathbf{0},\overline{l}_1,\overline{\epsilon}_1,\delta_1)}\cup\cdots\cup
U_{(id,\mathbf{0},\overline{l}_s,\overline{\epsilon}_s,\delta_s)}.
\]
Let $\mathscr{R}'$ be the collection of 
members of $\mathscr{R}$ that intersect $U_{(id,\mathbf{0},\overline{l}_1, \overline{\epsilon}_1,\delta_1)}\cup\cdots\cup
U_{(id,\mathbf{0},\overline{l}_s, \overline{\epsilon}_s,\delta_s)} $.
By local finiteness of $\mathscr{R}$ and the choice of the neighborhoods,
$\mathscr{R}'$ is finite.
\begin{claim}
 \label{appcl}
     For all $(id,\mathbf{0},\overline{l},\overline{\epsilon},\delta)
\in \{(id,\mathbf{0})\}\times
[-1,1]^{2n+2}
$,
there is $F\in\mathscr{R}'$
such that $(id,\mathbf{0},\overline{l},\overline{\epsilon},\delta)
\in\overline{F}$.
 \end{claim}
 \begin{proof}
     [Proof of Claim \ref{appcl}]
This is because $R$ is dense in $GL_m(\mathbb{R})\times[-1,1]^{3n+2}$ by Lemma \ref{subandenreg}.
  \end{proof}
\begin{claim}\label{pbrt}
    Given $(\overline{l},\overline{\epsilon},\delta)
\in
[-1,1]^{2n+2}
$
and $F\in \mathscr{R}'$ 
such that $(id,\mathbf{0},\overline{l},\overline{\epsilon},\delta)\in \overline{F}$,
if $(A_F,\overline{\eta}_F,\overline{l}_F,\overline{\epsilon}_F,\delta_F)\in F$ and $P\circ A_F(\overline{X}, \overline{l}_F,\overline{\epsilon}_F,\delta_F,
\overline{\epsilon},
\overline{\varphi},
\overline{\psi}
)-\overline{\eta}_F$ has $\leq N_F$ non-singular zeroes, then $P(\overline{X}, \overline{l},\overline{\epsilon},\delta,
\overline{\varphi},
\overline{\psi}
)$  has $\leq  N_F$  non-singular zeroes.
\end{claim}
\begin{proof}
     [Proof of Claim \ref{pbrt}]
 Let  $(id,\mathbf{0},\overline{l},\overline{\epsilon},\delta)
\in
\{(id,\mathbf{0})\}\times[-1,1]^{2n+2}
$ and $F\in \mathscr{R}'$ 
such that $(id,\mathbf{0},\overline{l},\overline{\epsilon},\delta)\in \overline{F}$.
Let $(A_F,\overline{\eta}_F,\overline{l}_F,\overline{\epsilon}_F,\delta_F)\in F$. 
Because $F$ is a simplex, we can find $\sigma:[0,1]\to \overline{F}$  a  $C^1$ path such that
\begin{itemize}
    \item 
$\sigma(0)=(id,\mathbf{0},\overline{l},\overline{\epsilon},\delta)$;
\item
$\sigma(1)=(A_F,\overline{\eta}_F,\overline{l}_F,\overline{\epsilon}_F,\delta_F)$;
\item 
for all $t\in (0,1]$, $\sigma(t)\in F$.
\end{itemize}
For notational convenience, we write
$\sigma(t)=(A(t),\overline{\eta}(t),\overline{l}(t),\overline{\epsilon}(t),\delta(t))$.

Suppose that $P(\overline{X},\overline{l}, \overline{\epsilon},\delta,
\overline{\varphi},
\overline{\psi}
)
=P\circ A(0)(\overline{X},
\overline{l}(0),\overline{\epsilon}(0),
\delta(0),
\overline{\varphi},
\overline{\psi}
)
-\overline{\eta}(0)$ has $\geq  N_F+1$  non-singular zeroes.
We will show that
\begin{align*}
S=\{t\in[0,1]:
P\circ A(t)(\overline{X}, 
\overline{l}(t),\overline{\epsilon}(t),
\delta(t),
\overline{\varphi},
\overline{\psi}
)-\overline{\eta}(t)
\\\text{
 has $\geq   N_F+1$  non-singular zeroes}\}=[0,1],
\end{align*}
 thus contradicting the assumption.

Let $B=\{b\in [0,1]:[0,b]\subseteq S\}$.
Then 
$0\in B$ because $0\in S$ by assumption.
$B$ is open by the Implicit Function Theorem.
We now prove that $B$ is closed.

Suppose $b\in \overline{B}\setminus B$.
Since $0\in B$ and $B$ is open, we have that
 $b>0$,
 and there is an increasing sequence of positive numbers $(b_n)_{n\in\mathbb{N}}$ in $B$ converging to $b$.
For each $n\in\mathbb{N}$, fix $N_F+1$ distinct non-singular zeroes for
$P\circ A(b_n)(\overline{X},
\overline{l}(b_n),\overline{\epsilon}(b_n),
\delta(b_n),
\overline{\varphi},
\overline{\psi}
)
-\overline{\eta}(b_n)$, say $x_{n,1},...,x_{n,N_{F}+1}$.
By Lemma \ref{indbdd},
there is a compact ball such that 
\[\underset{n\in\mathbb{N}}{\bigcup}Z(P\circ A(b_n)(\overline{X},
\overline{l}(b_n),\overline{\epsilon}(b_n),
\delta(b_n),
\overline{\varphi},
\overline{\psi}
)
-\overline{\eta}(b_n)
)\] is contained in that ball.
So taking a subsequence if necessary, we may assume that the sequences $(x_{n,1})_{n\in\mathbb{N}}$,
..., $(x_{n,N_{F}+1})_{n\in\mathbb{N}}$ converge to points $x_1,...,x_{N_F+1}$.
By continuity of $\sigma$, these are zeros of $P\circ A(b)(\overline{X},
\overline{l}(b),\overline{\epsilon}(b),
\delta(b),
\overline{\varphi},
\overline{\psi}
)
-\overline{\eta}(b)$.
By assumption, 
$(A(b),\overline{\eta}(b),\overline{l}(b),\overline{\epsilon}(b),\delta(b))\in F$,
so
all zeroes of \\
$P\circ A(b)(\overline{X},
\overline{l}(b),\overline{\epsilon}(b),
\delta(b),
\overline{\varphi},
\overline{\psi}
)
-\overline{\eta}(b)$  are non-singular. 
Also, due to Implicit Function Theorem,
it has $\geq N_{F}+1$ non-singular zeros.
So $b\in B$, a contradiction. 
Hence $\overline{B}\setminus B=\emptyset$ and $B$ is closed.
By connectedness of $[0,1]$, $B=[0,1]$.
It follows that
$P\circ A(\overline{X},\overline{l}, \overline{\epsilon},\delta,
\overline{\varphi},
\overline{\psi}
)
=P\circ A(0)(\overline{X},
\overline{l}(0),\overline{\epsilon}(0),
\delta(0),
\overline{\varphi},
\overline{\psi}
)
-\overline{\eta}(0)$ has $\leq   N_{F}$  many non-singular zeroes.
\end{proof}
For each $F\in \mathscr{R}'$, fix a point $(A_F,\overline{\eta}_F,\overline{l}_F,\overline{\epsilon}_F,\delta_F)\in F$.
By 2. of Lemma \ref{indbdd} and inductive hypothesis,
there is $N_F\in \mathbb{N}$ such that
$P\circ A_F(\overline{X}, \overline{l}_F,\overline{\epsilon}_F,\delta_F,
\overline{\epsilon},
\overline{\varphi},
\overline{\psi}
)-\overline{\eta}_F$ has $\leq N_F$  non-singular zeroes.
Now take $N=
\max\{N_F:F\in\mathscr{R}'\}$.
By Claim \ref{appcl}  and Claim \ref{pbrt},  for all $(\overline{l},\overline{\epsilon},\delta)
\in[-1,1]^{2n+2}$,
$P(\overline{X}, \overline{l},\overline{\epsilon},\delta,
\overline{\varphi},
\overline{\psi}
)$ has $\leq  N$  non-singular zeroes.
\end{proof}
\begin{remark}
By \cite[Chapter~8, 2.9]{van1998tame},
    the assumption in Theorem \ref{rtb} is a necessary condition for the o-minimality of $\mathbb{R}_{an,\exp,\varphi}$ because $R$ is definable in $\mathbb{R}_{an,\exp,\varphi}$.
  
\end{remark}
\subsection{Remarks on the Regularity Assumption}

 \begin{definition}
    Let $C$ be a cell definable in $\mathbb{R}_{an,\exp}$.
    A (not necessarily definable) set $X\subseteq C$ is 
    \textit{subanalytic in $C$} if for each $x\in C$, there is a neighborhood $U$ of $x$ in $C$ such that  $X\cap U$ is the projection of a bounded semi-analytic set.
\end{definition}
\begin{remark}
    Fix a cell decomposition $D_1\cup\cdots\cup D_q$ of $G$ satisfying Lemma \ref{indbdd} and
a cell decomposition $C_1\cup\cdots\cup C_v$ of 
$G\times
[-1,1]^{2n+2}$ such that for all $i\in [v]$ there is $j\in [q]$ satisfying that $C_i\subseteq D_j\times
[-1,1]^{2n+2}$.
For all $i\in [v]$,
       $R_i:=R\cap C_i$ is subanalytic in $C_i$:

       Observe that the set $R$ can be written as 
\begin{align*}
    R=\{(A,\overline{\eta},\overline{l},\overline{\epsilon},\delta)
    \in
    G\times[-1,1]^{2n+2}: \forall \overline{x}\,
    {\text{with $\|x\|\leq g(A,\overline{\eta})$, }
    }\\
    P\circ A(\overline{x},\overline{l},
\overline{\epsilon},
\delta,
\overline{\varphi},
\overline{\psi})
\neq
\overline{\eta}
\text{ or}
\det J_{\overline{x}}\,(
P\circ A(\overline{X},\overline{l},
\overline{\epsilon},
\delta,
\overline{\varphi},
\overline{\psi}))
\neq 0\}
\end{align*}
Locally at each point, the complement $R_i^c$ of $R\cap C_i$ in 
$C_i$ is the projection of a \textit{bounded} semi-analytic set,
because 
by Lemma \ref{indbdd},
for each $(A,\overline{\eta})\in D_j$,
there exist a neighborhood $U$ of $(A,\overline{\eta})$ in $D_j$ and a compact ball that contains $\bigcup_{(A,\overline{\eta})\in U}
Z( P\circ A(\overline{X},\overline{l},
\overline{\epsilon},
\delta,
\overline{\varphi},
\overline{\psi}
)
-\overline{\eta})$.
Hence for each $p\in R_i^c$,
we can find a \textit{bounded} semi-analytic set such that locally around $p$, $R^c$ is the projection of it.

To get a locally finite triangulation needed in Theorem \ref{rtb},
    one possible way is to show that  $R$ is subanalytic in the sense of \cite[DEFINITION 3.3.]{hironaka11974triangulations}.
    Then \cite[Subanalytic triangulation]{hironaka11974triangulations} gives a triangulation we need.
    What we have here is local subanalycity of $R$ in $G\times[-1,1]^{2n+2}$.
    But to get subanalycity and triangulation in \cite{hironaka11974triangulations}, 
    we need subanalycity of $R$ in $\overline{G}\times [-1,1]^{2n+2}$.

\end{remark}

\newpage
\section{O-minimality of 
$\mathbb{R}_{an,\exp,\varphi}$
} \label{proof}
\indent

This section is rephrasing Wilkie's work \cite{wilkie1999theorem}
 and  Milnor's work \cite{milnor1963morse} for expository purpose.

\subsection{Wilkie's Test of o-Minimality}
\indent

In \cite{wilkie1999theorem}, Wilkie gave a test for o-minimality. 
Combining this test with Khovanskii's theory \cite{khovanskiui1991fewnomials}, Wilkie  proved o-minimality of $\mathbb{R}_{\exp}$ in \cite{wilkie1999theorem}.
 To deduce the o-minimality of $\mathbb{R}_{an,\exp,\varphi}$ from the existence of a uniform bound for the number of solutions to systems $P$ in (\ref{gensys}),
we will use Wilkie's test instead of the original definition of o-minimality.
\begin{definition}\label{gammaA}
    \cite[Definition~1.2]{wilkie1999theorem}
    Suppose $n \geq 1$ and $A \subseteq \mathbb{R}^n$. Then $\gamma(A)$ denotes the smallest natural
number $N$ with the following property: 
for any aﬃne subspace $X$ of $\mathbb{R}^n$ we
have $A\cap X= A_1 \cup\cdots\cup A_N$ for some connected subsets  (in the Euclidean topology) $A_1,\ldots,A_N$ of $\mathbb{R}^n$.
If no such $N$ exists we write $\gamma(A) = \infty$.
\end{definition}
\begin{fact}\label{omintest}
    \cite[Theorem~1.9]{wilkie1999theorem}
    Let $\mathcal{M}$ be any expansion of the ordered ring of real numbers by
$C^{\infty}$ functions. 
Suppose that for each $n \geq 1$ every quantifier-free, $\mathcal{M}$-definable
(with parameters) subset $A$ of $\mathbb{R}^n$ satisfies $\gamma(A) < \infty$.
Then $\mathcal{M}$ is o-minimal (in the usual model-theoretic sense that every $\mathcal{M}$-definable (with parameters) subset of
$\mathbb{R}$ has finite boundary).
\end{fact}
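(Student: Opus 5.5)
We would follow Wilkie's strategy in \cite{wilkie1999theorem} rather than prove o-minimality directly. The first step is to reduce to a statement about projections. Every $\mathcal{M}$-definable set is obtained from quantifier-free definable sets by Boolean operations and coordinate projections. It therefore suffices to build a class $\mathcal{S}$ with the following properties: $\mathcal{S}$ contains the quantifier-free definable sets; $\mathcal{S}$ is closed under finite unions, intersections, coordinate projections and complements; and every member of $\mathcal{S}$ in $\mathbb{R}^1$ has finitely many connected components. For $\mathcal{S}$ we take the sets that are finite unions of projections $\pi(C)$, where $C$ is a connected component of a quantifier-free definable set $A\subseteq\mathbb{R}^{n+k}$ (possibly after adjoining partial derivatives of the given $C^\infty$ functions as new terms, which keeps everything quantifier-free and $C^\infty$). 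The hypothesis $\gamma(A)<\infty$ guarantees that each such $A$ has finitely many components, so $\mathcal{S}$ is well defined. Closure under unions and projections is immediate. Closure under intersections, and the finiteness of components in $\mathbb{R}^1$, come from the uniform bound on components of affine sections: intersecting with an affine subspace, for instance a fibre $\{x\}\times\mathbb{R}^k$ or a line, keeps the number of components below $\gamma(A)$. A projection of a connected set to $\mathbb{R}$ is an interval, so sets in $\mathcal{S}\cap\mathcal{P}(\mathbb{R})$ are finite unions of intervals.

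The heart of the argument is closure of $\mathcal{S}$ under complement, which is a Gabrielov-type theorem of the complement. We would prove it by induction on the ambient dimension $n$ together with the dimension of the sets involved, using a uniform finiteness principle. For a family $\{\pi(C_y)\}_y$ arising from a quantifier-free family $A_y$, the number of components of the fibres is bounded uniformly by $\gamma(A)$, because fibres are affine sections. To describe the complement of $\pi(C)$, we first pass to closures and frontiers. A point of the closure $\overline{\pi(C)}$ is detected by a nearest-point argument. For a fixed $z$, the points of $A$ minimizing (or critical for) $\|x-z\|^2$ on the smooth strata of $A$ are the solutions of a quantifier-free system built from the defining functions and their derivatives, via Lagrange multiplier conditions. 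Hence the set of $z$ with $d(z,\pi(C))=0$, and likewise the frontier, is again a finite union of projections of components of quantifier-free sets. The frontier has strictly lower dimension, so the induction hypothesis handles it. On each piece of $\mathbb{R}^n\setminus\overline{\pi(C)}$ we then have an open set, and we show it is a finite union of components of a set in $\mathcal{S}$. The uniform bound $\gamma$ limits how many components the complement can have along each line.

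Finally, with $\mathcal{S}$ closed under all the operations, every definable $X\subseteq\mathbb{R}$ lies in $\mathcal{S}$. It is therefore a finite union of intervals and points, so its boundary is finite, and $\mathcal{M}$ is o-minimal. We expect the main obstacle to be the complement step, and specifically two things within it. The first is guaranteeing that the nearest-point/critical-point description stays inside the quantifier-free world with \emph{finite} $\gamma$; this requires controlling the singular locus of $A$ by stratifying with further derivative conditions, and checking that $\gamma$ stays finite after adding derivatives. The second is making the induction on dimension terminate. Our first attempt would be to stratify by the rank of the Jacobian, which again yields quantifier-free conditions, and to apply the hypothesis $\gamma<\infty$ to each stratum separately.
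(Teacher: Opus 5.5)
The paper does not prove this statement. It imports Wilkie's Theorem~1.9 as a black box, so your sketch has to stand on its own, and it does not: there is a genuine gap at the complement step.

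\textbf{What works.} Your reduction is correct. The class $\mathcal{S}$ contains the quantifier-free sets and is closed under finite unions and projections. It is also closed under intersections, via the fibre product $\{(x,y_1,y_2):(x,y_1)\in A_1,(x,y_2)\in A_2\}$, each of whose components maps into a single component of $A_1$ and of $A_2$. Its members in $\mathbb{R}$ are finite unions of intervals.

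\textbf{Where it fails: closures.} Closure under complement is the entire content of Wilkie's theorem, and the mechanism you propose fails at its first step. Membership in $\overline{\pi(C)}$ is a $\forall\exists$ condition. A nearest-point/Lagrange-multiplier system only detects distances that are \emph{attained} at points of $C$. When the fibre escapes to infinity, or the nearest point lies in the frontier of $C$, nothing is attained. For example, take $C=\{(x,y):xy=1,\ x>0\}$. Then $0\in\overline{\pi(C)}=[0,\infty)$, yet $x^2$ has no critical point on $C$ at all, since $(2x,0)=\lambda(y,x)$ forces $\lambda=0$ and then $x=0$. Compactifying the fibre variables semialgebraically makes the infimum attained, but only on the sphere at infinity, where the given $C^\infty$ functions are not defined. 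So no quantifier-free system describes those points. Controlling such limit points is exactly where the uniformity built into $\gamma$ must be exploited. For instance, a uniform bound on the number of components of compact sets survives Hausdorff limits, whereas mere finiteness does not. Your sketch never uses $\gamma<\infty$ in this way.

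\textbf{Other unproved steps.} The remaining steps of the complement argument are asserted rather than proved.
\begin{itemize}
\item ``The frontier has strictly lower dimension'' presupposes a dimension theory for sets $\pi(C)$. The inequality $\dim(\overline{X}\setminus X)<\dim X$ comes from cell decomposition, i.e.\ from the tameness you are trying to establish. It fails for the image $X$ of $t\mapsto(\cos t,\sin t,\cos\sqrt{2}\,t,\sin\sqrt{2}\,t)$, which is a projection of a connected $C^\infty$ graph whose frontier is two-dimensional. So it must be derived from $\gamma<\infty$.
\item Adjoining partial derivatives and stratifying by Jacobian rank produces sets that are not quantifier-free in $\mathcal{M}$. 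Finiteness of $\gamma$ for them is not a formal consequence of the hypothesis; you flag this yourself but do not resolve it.
\item Even granting $\overline{\pi(C)}\in\mathcal{S}$, showing that the open set $\mathbb{R}^n\setminus\overline{\pi(C)}$ lies in $\mathcal{S}$ is again a complement problem. A bound on the number of components along lines gives no existential description of it.
\end{itemize}

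\textbf{A workable form of the last step.} Find $Y\in\mathcal{S}$ of lower dimension containing the boundary of $\pi(C)$, with $\mathbb{R}^n\setminus Y\in\mathcal{S}$ by induction. Then $\pi(C)\setminus Y$ is open and closed in $\mathbb{R}^n\setminus Y$, hence a union of its components. Components of members of $\mathcal{S}$ are again in $\mathcal{S}$, since each member is a finite union of connected sets $\pi(C_i)$. The remaining piece $Y\setminus\pi(C)$ would be handled by a relative version of the induction. Producing such a $Y$ is precisely the missing idea, including the boundary points that come from fibres escaping to infinity and with a genuine dimension drop.
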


Before we deal with the geometry, we first simplify the model-theoretic part.
In Fact \ref{omintest}, we need to look at all quantifier-free sets. 
But as in \cite[Theorem~1.9]{wilkie1999theorem},
we only need to bound the number of connected components for term-definable hypersurfaces.
\subsection{Reducing the Quantifier-Free Formulas}
 \indent

 An \textit{affine subspace} of $\mathbb{R}^n$ is $\mathbf{v}+Z(L)$ where $\mathbf{v}\in\mathbb{R}^n$ and $L:\mathbb{R}^n\to \mathbb{R}^n$ is a linear transformation.
 So each affine subspace of $\mathbb{R}^n$ is the zero set of 
 \[
 \begin{cases}
     l_{1,1}X_1+\cdots+l_{n,1}X_n&=l_1
     \\.\\.\\.\\
     l_{1,n}X_1+\cdots+l_{n,n}X_n&=l_k
 \end{cases}
 \]
 for some $(l_{1,1},\ldots,l_{n,1},l_1)$,$\ldots$,
$(l_{1,n},\ldots,l_{n,n},l_n)\in[-1,1]^{n+1}$

 Let $\mathcal{R}$ be $\mathbb{R}_{an,\exp,\varphi}$.
 In the model $\mathcal{R}$,
 a quantifier-free formula is of the form 
 \[
 \underset{i=1}{\overset{s}{\bigcup}}
\underset{j=1}{\overset{r_i}{\bigcap}}\,
\{\overline{x}\in\mathbb{R}^n:
[F_{i,j}(\overline{x})\, *_{i,j}\, 0]^{\epsilon_{i,j}}\}
 \]
 where $F_{i,j}$ are term-definable functions in $\mathcal{R}$,
$\epsilon_{i,j}\in\{0,1\}$,
and $*_{i,j}\in\{ =,>\}$, for $i = 1,\ldots,s$ and $j = 1,\ldots,r_i$.
 (This is known as the disjunctive normal form in model theory.)

 Observe that $\neg (F>0)$ iff $F\leq 0$,
and $F<0$ iff $-F>0$.
Also $\neg (F=0)$ iff $(F>0)\,\vee\, (-F>0)$.
Hence, a quantifier-free formulas is of the form 
\[
\underset{i=1}{\overset{s}{\bigcup}}
\underset{j=1}{\overset{r_i}{\bigcap}}\,
\{\overline{x}\in\mathbb{R}^n:
F_{i,j}(\overline{x})\, *_{i,j}\, 0\}
\]
 where $F_{i,j}$ are term-definable functions in $\mathcal{R}$,
and $*_{i,j}\in\{ =,>\}$, for all $i \in\{ 1,\ldots,s\}$ and $j \in\{ 1,\ldots,r_i\}$.

The proof of \cite[Theorem~1.9]{wilkie1999theorem} gave a way to write a quantifier-free formula as
$\pi(Z(F))$ where $F$ is some term-definable function:
Given a quantifier-free formula of the form
\begin{equation}
\underset{i=1}{\overset{s}{\bigcup}}
\underset{j=1}{\overset{r_i}{\bigcap}}\,
\{\overline{x}\in\mathbb{R}^n:
F_{i,j}(\overline{x})\, *_{i,j}\, 0\}
\label{qfform}
\end{equation}
 where $F_{i,j}$ are term-definable functions in $\mathcal{R}$
and $*_{i,j}\in\{ =,>\}$ for all $i \in\{ 1,\ldots,s\}$ and $j \in\{ 1,\ldots,r_i\}$,
it is $\pi(Z(F))$.
Here $F$ is defined by
\[
\underset{i=1}{\overset{s}{\prod}}
\,\underset{j=1}{\overset{r_i}{\sum}}
\,\overline{F}_{i,j}^2,
\]
where\begin{equation}\label{fdef}
    \begin{cases}
    \overline{F}_{i,j}=F_{i,j}\qquad
    &\text{ if $*_{i,j}$ is $=$}\\
\overline{F}_{i,j}=F_{i,j}\cdot X_{i,j}^2-1
\qquad&\text{ if $*_{i,j}$ is $>$.} 
\end{cases}
\end{equation}
Here, the $X_{i,j}$'s are new variables.
\begin{definition}\label{qfaff}
Given $A$ a quantifier-free definable set in $\mathcal{R}$ of the form (\ref{qfform}), the \textit{affine family associated to $A$} is the family of functions
\[
  \mathcal{F}_A:=\{F_L:L\text{ is an affine plane}\}\]
  where $F_L=\underset{i=1}{\overset{s}{\prod}}
\,\underset{j=1}{\overset{r_i}{\sum}}
\,\overline{F}_{i,j}^2+
\,\underset{m=1}{\overset{k}{\sum}}
(l_{1,m}x_1+\cdots+l_{n,m}x_n-l_m)^2)$, and the $(n+1)$-tuples $
(l_{1,1},\ldots,l_{n,1},l_1)$,$\ldots$,
$(l_{1,n},\ldots,l_{n,k},l_n)\in[-1,1]^{n+1}$ define the affine plane $L$.
\end{definition}

Observe that if $Z(F)$ has $\leq N$ many connected components, then $\pi(Z(F))$ has $\leq N$ many connected components, because $\pi$ is continuous.
It follows that the problem of proving o-minimality of $\mathbb{R}_{an,\exp,\varphi}$ is reduced to proving the following:
\begin{problem}\label{rdpm}
Given a quantifier-free definable set $A\subseteq\mathbb{R}^n$ of the form (\ref{qfform}), 
there is $N(A)\in\mathbb{N}$ such that for all $n$-tuples $
(l_{1,1},\ldots,l_{n,1},l_1)$,$\ldots$,
$(l_{1,n},\ldots,l_{n,k},l_n)\in[-1,1]^{n+1}$, the set
\[
Z(\underset{i=1}{\overset{s}{\prod}}
\,\underset{j=1}{\overset{r_i}{\sum}}
\,\overline{F}_{i,j}^2+
\,\underset{m=1}{\overset{k}{\sum}}
(l_{1,m}x_1+\cdots+l_{n,m}x_n-l_m)^2)
\] has $\leq N(A)$ many connected components.
The $\overline{F}_{i,j}$'s are defined as in (\ref{fdef}).
\end{problem}

\subsection{Weak Morse Inequalities}
\indent

To use Fact \ref{omintest}, we need to count connected components of subsets in $\mathbb{R}^n$. 
Imitating \cite{khovanskiui1991fewnomials} and \cite{milnor1964betti},
we will use Morse theory to count connected components.
We briefly explain Morse theory in this subsection and list the facts we will use.

For a topological space $X$, the zero-th betti number is the same as the number of path-connected components. 
(See e.g. \cite[Proposition~2.7]{hatcher}.)
Hence, to bound the number of connected components, it suffices to bound the sum of betti numbers. (By \cite[Corollary~9.3]{gamelin2013introduction}, each connected component
of $X$ is a union of path-connected components of $X$.)

Morse theory provides a good way to bound the number of connected components of a compact manifold.
See \cite{milnor1963morse}  for more details on Morse theory.

\begin{definition}\cite[Chapter~I, Section~2]{milnor1963morse}
Let $M$ be a  manifold.
Let  $f:M\to \mathbb{R}$ be a smooth function.
A \textit{critical point} of $f$ is $p\in M$ such that $df_p : T_p M \to T_{f(p)}\mathbb{R}$ is zero.
  
  A critical point $p\in M$ is called \textit{non-degenerate} if and only if the matrix
\[
\left( \frac{\partial^2 f}{\partial x^i \partial x^j}(p) \right)
\]
is non-singular.
\end{definition}
\begin{definition}\cite[p.29]{milnor1963morse}
    Let $M$ be a smooth manifold and $f$ a differentiable function
on $M$. $f$ is called a \textit{Morse function} if all of its  critical points are non-degenerate.
\end{definition}
\begin{fact}
\label{nondegiso}
\cite[Corollary~2.3]{milnor1963morse}
    Non-degenerate critical points are isolated.
\end{fact}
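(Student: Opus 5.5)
Since Fact \ref{nondegiso} is purely local, the plan is to put the critical point $p$ in local coordinates and apply the inverse function theorem to the gradient map. Choose a chart $x=(x^1,\ldots,x^n)$ around $p$ with $x(p)=0$. On the chart domain, define the smooth map
\[
g=\Bigl(\frac{\partial f}{\partial x^1},\ldots,\frac{\partial f}{\partial x^n}\Bigr)\colon U\to\mathbb{R}^n .
\]
In these coordinates, a point $q$ of the chart is a critical point of $f$ exactly when $df_q=0$, that is, exactly when $g(q)=0$. So the critical points of $f$ inside $U$ form the zero set of $g$. Since $p$ is critical, $g(p)=0$.

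Next I compute the Jacobian of $g$ at $p$. Its $(i,j)$ entry is $\partial^2 f/\partial x^i\partial x^j(p)$, so $J_p(g)$ is the Hessian matrix of $f$ at $p$, and by the non-degeneracy assumption this matrix is non-singular. One point needs a remark here: the definition of non-degeneracy should not depend on the chart. Under a change of coordinates $y$, the Hessian at a critical point transforms as $H_y=D^T H_x D$, where $D$ is the Jacobian of the coordinate change. The extra terms involving $\partial f/\partial x^k$ drop out because $df_p=0$. Hence non-singularity is preserved, which is the standard observation in \cite{milnor1963morse}.

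By the inverse function theorem, $g$ restricts to a diffeomorphism from some open neighborhood $V\subseteq U$ of $p$ onto an open neighborhood of $0$. In particular $g$ is injective on $V$, so $p$ is the only point of $V$ where $g$ vanishes. Therefore $p$ is the only critical point of $f$ in $V$, and $p$ is isolated.

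Nothing in this argument is a real obstacle. The only step that needs care is the chart-independence of the Hessian's non-singularity at a critical point.
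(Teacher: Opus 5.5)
Your proof is correct, but it takes a different route from the one behind the citation. The paper gives no argument of its own and quotes Milnor's Corollary~2.3. Milnor reads that corollary off the Morse Lemma (his Lemma~2.2): first one builds coordinates $y$ near $p$ with $f=f(p)-(y^1)^2-\cdots-(y^\lambda)^2+(y^{\lambda+1})^2+\cdots+(y^n)^2$, and then isolatedness is immediate because $df=\pm 2y^1\,dy^1\pm\cdots\pm 2y^n\,dy^n$ vanishes only at $y=0$.

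You instead identify the critical points in a chart with the zeros of the gradient map $g$. You note that non-degeneracy says exactly that $p$ is a non-singular zero of $g$ (the Jacobian of $g$ is the Hessian of $f$), and you conclude by local injectivity from the inverse function theorem.

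\emph{What your route buys:} it is shorter and more elementary. It needs only $f\in C^2$ and no normal form. The Morse Lemma takes genuinely more work; Milnor proves it via a Hadamard-type lemma and an inductive diagonalization.

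\emph{What Milnor's route buys:} it yields the index and the local normal form. He needs these anyway for the handle-attachment results behind the weak Morse inequalities (Fact~\ref{morine}), so in his development the corollary comes for free.

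Your formulation, that non-degenerate critical points are non-singular zeros of a system of equations, is also the viewpoint the paper actually uses. It bounds critical points by non-singular zeros of system~(\ref{1der}) via Theorem~\ref{rtb}.

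Your remark on chart-independence is right, since the terms involving $\partial f/\partial x^k$ drop out at a critical point. Strictly, though, you only need one chart in which the Hessian is non-singular.
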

\begin{fact}
\label{morine}
\cite[Theorem~5.2]{milnor1963morse}
   (Weak Morse Inequalities).
  Let $M$ be a compact manifold.
  Let $f:M\to \mathbb{R}$ be a Morse function on $M$.
  Then 
  \[
  \text{the sum of Betti number of $M$} \leq \text{the number of critical points of $f$}.
  \]
\end{fact}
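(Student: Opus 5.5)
We would follow the standard route of \cite{milnor1963morse}, building $M$ up through the sublevel sets $M^a:=f^{-1}((-\infty,a])$ and tracking how the Betti numbers change as $a$ crosses critical values. Throughout, Betti numbers are taken with coefficients in a fixed field $\mathbb{F}$, so each $H_i(\,\cdot\,;\mathbb{F})$ is a vector space and dimension counts behave additively along exact sequences.

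\textbf{Step 1: finitely many critical points.} Since $M$ is compact and, by Fact \ref{nondegiso}, non-degenerate critical points are isolated, $f$ has finitely many critical points $p_1,\ldots,p_r$. If the statement is read as counting critical points, we may reduce to the case where the critical values $c_i=f(p_i)$ are distinct. One way is to perturb $f$ near each $p_i$ by adding a small bump function. This keeps the function Morse, changes no critical points, and separates the critical values. Alternatively, we handle all critical points at one level at once in Step 3; that version attaches several cells simultaneously, and the count is unchanged.

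\textbf{Step 2: the two local facts.} First, if $f^{-1}([a,b])$ contains no critical point, then $M^a$ is a deformation retract of $M^b$. To see this, flow along $-\nabla f/\|\nabla f\|^2$, suitably cut off; compactness guarantees the flow exists for the needed time. Hence $M^a$ and $M^b$ have the same Betti numbers. Second, if $p$ is the unique critical point in $f^{-1}([c-\varepsilon,c+\varepsilon])$ and has index $\lambda$, then $M^{c+\varepsilon}$ is homotopy equivalent to $M^{c-\varepsilon}$ with one $\lambda$-cell attached. This uses the Morse lemma, which gives coordinates with $f=c-x_1^2-\cdots-x_\lambda^2+x_{\lambda+1}^2+\cdots+x_n^2$ near $p$, together with Milnor's modified function $F$ that agrees with $f$ away from $p$ and lowers it near $p$. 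We expect this step to be the main obstacle. It is the only genuinely geometric input, and the construction of $F$ and the retraction onto the cell require care. We would reproduce Milnor's proof of \cite[Theorem~3.2]{milnor1963morse} essentially verbatim.

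\textbf{Step 3: counting.} For $Y=X\cup_\phi e^\lambda$, excision gives $H_*(Y,X;\mathbb{F})\cong H_*(D^\lambda,S^{\lambda-1};\mathbb{F})$. This is $\mathbb{F}$ in degree $\lambda$ and $0$ otherwise. The long exact sequence of the pair $(Y,X)$ then yields
\[
\textstyle\sum_i b_i(Y)\le \sum_i b_i(X)+\sum_i b_i(Y,X)=\sum_i b_i(X)+1 .
\]
Now choose levels $a_0<c_1<a_1<\cdots<c_r<a_r$ with $M^{a_0}=\emptyset$ and $M^{a_r}=M$. By Step 2 and induction on $j$, we get $\sum_i b_i(M^{a_j})\le j$. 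Taking $j=r$ gives that the sum of the Betti numbers of $M$ is at most the number of critical points of $f$, which is Fact \ref{morine}.
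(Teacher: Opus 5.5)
Your proposal is correct and follows the argument behind the citation. The paper gives no proof of its own and simply invokes \cite[Theorem~5.2]{milnor1963morse}, whose proof is exactly your combination of Milnor's Theorems~3.1 and~3.2 (crossing a critical value of index $\lambda$ attaches one $\lambda$-cell to the sublevel set, up to homotopy) with the subadditivity of Betti numbers from the long exact sequence of a pair; the only cosmetic difference is that Milnor first proves $b_\lambda(M)\le C_\lambda$ for each index $\lambda$ (with $C_\lambda$ the number of critical points of index $\lambda$) and then sums, whereas you bound the total directly.
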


Hence, by Fact \ref{omintest} and Fact \ref{morine}, the problem of proving o-minimality is related to the problem of bounding zeroes of non-degenerate systems of equations.

\subsection{Reducing to Compact Smooth Manifolds}
\indent

The Weak Morse Inequality (Fact \ref{morine}) gives a convenient way to bound the number of connected components of a compact smooth manifold,
but given an arbitrary term-definable $F$, $Z(F)$ is neither necessarily compact nor necessarily a smooth manifold. 
Milnor's method in \cite{milnor1964betti}  solved both of these issues.
\ignore{
As in \cite{milnor1963morse}, unless specified, the homology and cohomology groups have coefficients in some fixed field, say $\mathbb{Q}$.
}
\ignore{
 
 \subsubsection{Triangulability of Term-Definable Hypersufaces}
 \indent
 
 Because of Fact \ref{subanstra} and analyticity of $\varphi$, we have the following:
 
 \begin{fact}\label{trian}
     Given a term-definable $F$ in $\mathbb{R}_{an,\exp,\varphi}$,
     $Z(F)$ is triangulable.
 \end{fact}
 }
 \subsubsection{Reducing to Compact Sets}\label{redcomp}
 \indent

As in \cite[Theorem~2]{milnor1964betti}, Problem \ref{rdpm} can be reduced to compact sets because of the following fact:
 \begin{fact}
 \label{drlim}\cite[Proposition~3.33]{hatcher} 
     If a space $X$ is the union of a directed set of subspaces $X_\alpha$ with
the property that each compact set in $X$ is contained in some $X_\alpha$, then the natural
map \[
\lim_{\longrightarrow}
H_i(X_\alpha; G)\to H_i(X; G)
\]
is an isomorphism for all $i$ and $G$.
 \end{fact}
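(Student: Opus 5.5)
This is Hatcher's Proposition 3.33. I would prove it with the standard compactness property of singular chains. The map in question is induced by the inclusions $X_\alpha\hookrightarrow X$, and it is compatible with the directed system: the maps $H_i(X_\alpha;G)\to H_i(X_\beta;G)$ for $X_\alpha\subseteq X_\beta$ commute with the maps to $H_i(X;G)$. So the universal property of the direct limit gives a well-defined homomorphism
\[
\Phi:\lim_{\longrightarrow}H_i(X_\alpha;G)\to H_i(X;G).
\]
It remains to check surjectivity and injectivity, and both reduce to one observation. A singular chain is a finite formal sum of simplices $\sigma:\Delta^i\to X$. Each image $\sigma(\Delta^i)$ is compact, so the support of a chain is a finite union of compact sets and hence compact. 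By hypothesis that support then lies in some $X_\alpha$.

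For surjectivity, take a class $[z]\in H_i(X;G)$ and represent it by a cycle $z$. The support of $z$ lies in some $X_\alpha$. There $z$ is still a cycle, since the boundary is computed the same way in $X_\alpha$ as in $X$. So $z$ defines a class in $H_i(X_\alpha;G)$, and the image of that class in the direct limit maps to $[z]$ under $\Phi$.

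For injectivity, take an element of the direct limit that maps to zero. Every element of a direct limit is the image of some $[z]\in H_i(X_\alpha;G)$, where $z$ is a cycle in $X_\alpha$. That $\Phi$ sends it to zero means $z=\partial c$ for some $(i+1)$-chain $c$ in $X$. The support of $c$ is compact, so it lies in some $X_\gamma$. Because the family is directed, we can choose $\beta$ with $X_\alpha\cup X_\gamma\subseteq X_\beta$. Then $z=\partial c$ holds in $X_\beta$, so $[z]$ already vanishes in $H_i(X_\beta;G)$, and therefore its image in the direct limit is zero.

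The only step needing care is this last one. The bounding chain may not lie in $X_\alpha$, and directedness is exactly what lets us move to a common $X_\beta$ containing both $z$ and $c$. Everything else is bookkeeping with the definition of the direct limit.
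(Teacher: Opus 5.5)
Your proof is correct and is the standard compact-support argument from Hatcher's Proposition~3.33, which the paper cites without reproducing. Surjectivity holds because a representing cycle has compact support contained in some $X_\alpha$; injectivity holds because a bounding chain has compact support, and directedness supplies a common $X_\beta\supseteq X_\alpha$ containing that support.
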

\ignore{
 \begin{fact}\label{limunionr}
 Let $F$ be a term-definable function. 
     For each $r\in\mathbb{N}$,
     let $K_r=Z(F)\cap B_{<r}(0)$.
     Given $i\in\mathbb{N}$,
if  $M\in\mathbb{N}$ satisfies that for all $r\in\mathbb{N}$,
rank $ H_i(K_r)\leq M$.
     
     Then \[
     \text{rank} \, H_i(Z(F))\leq M
     \]
 \end{fact}
 \begin{proof}
     By Fact \ref{drlim},
rank $H_i(Z(F))$ $=$ rank $\underset{\longrightarrow}{\lim} H_i(K_r)$ $\leq$ $M$ .

 \end{proof}
 }
 \subsubsection{Reducing to Compact Smooth Manifolds}\label{redcsm}
 \indent
 
 Given a term-definable $F$,
 $Z(F)$ is not necessarily a smooth manifold unless $0$ is a regular value of $F$.
 We follow the proof of \cite[Theorem~2]{milnor1964betti} to reduce 
 Problem \ref{rdpm} to the case of compact smooth manifolds.
The argument in \cite{milnor1964betti} works here because all spaces we will mention are triangulable by \cite[Subanalytic triangulation]{hironaka11974triangulations}.

 C$\check{\text{e}}$ch cohomology is used because C$\check{\text{e}}$ch cohomology has the continuity property.
To use C$\check{\text{e}}$ch cohomology, we
relate the C$\check{\text{e}}$ch cohomology groups with the singular homology groups.
 \begin{fact}
 \label{cechsing}\cite[Proposition~6.12]{dold2012lectures}

    If \( Y \subseteq  X \) is a pair of ENR’s then \( \check{H}^*(X, Y) \approx H^*(X, Y) \),
i.e., for ENR’s  C$\check{\text{e}}$ch cohomology coincides with singular cohomology.
 \end{fact}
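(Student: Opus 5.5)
The plan is to reduce everything to a statement about neighbourhoods in Euclidean space, exploiting that an ENR is a neighbourhood retract of $\mathbb{R}^N$. Embed $X$ as a locally closed subset of some $\mathbb{R}^N$ (every ENR admits such an embedding). Choose an open neighbourhood $U$ of $X$ and a retraction $r:U\to X$. The first ingredient is the standard description of \v{C}ech cohomology of a locally compact subset of a manifold (the continuity/tautness property):
\[
\check{H}^*(X)\cong \underset{\longrightarrow}{\lim}\, H^*(V),
\]
where $V$ runs over the open neighbourhoods of $X$ in $\mathbb{R}^N$ (or in a fixed open set in which $X$ is closed), and the maps are induced by restriction. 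I would take this as the working definition, or cite it in the same spirit as Fact \ref{drlim}. The relative version $\check{H}^*(X,Y)\cong \underset{\longrightarrow}{\lim}\, H^*(V,W)$ is obtained the same way, using neighbourhood pairs $W\subseteq V$.

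Next I would show that the natural map $\underset{\longrightarrow}{\lim}\, H^*(V)\to H^*(X)$, induced by the inclusions $X\hookrightarrow V$, is an isomorphism.

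\emph{Surjectivity.} Since $r\circ i=\mathrm{id}_X$ for the inclusion $i:X\to U$, every class on $X$ is the restriction of $r^*$ of itself on $U$.

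\emph{Injectivity.} Suppose $c\in H^*(V)$ restricts to zero on $X$. I would pass to the smaller neighbourhood
\[
V'=\{x\in V\cap U:\ [x,r(x)]\subseteq V\},
\]
which is open and contains $X$ by continuity of $r$. On $V'$ the straight-line homotopy $H(x,t)=(1-t)x+t\,r(x)$ stays in $V$, so the inclusion $V'\hookrightarrow V$ is homotopic to $i\circ r|_{V'}$. Hence $c|_{V'}=r^*(c|_X)=0$, and $c$ vanishes in the direct limit. This homotopy step is the main point: it is where the ENR hypothesis is used essentially, and it says that $X$ is cofinally homotopy-dominated by its neighbourhoods.

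Finally, for the pair $Y\subseteq X$, I would use the fact that $Y$ is also an ENR, choose the neighbourhood pairs $(V,W)$ compatibly, and compare the long exact sequences of the pairs. Direct limits are exact, so the limit sequence of the $H^*(V,W)$ is exact; the \v{C}ech sequence is exact as well. The absolute isomorphisms for $X$ and $Y$, together with the five lemma, then give $\check{H}^*(X,Y)\cong H^*(X,Y)$.

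The delicate technical point I expect is to arrange the retractions of $X$ and $Y$ so that the neighbourhood pairs are cofinal and the homotopies respect pairs. I would try first to work only with the absolute statements and invoke the five lemma, which avoids needing a retraction of pairs at all.
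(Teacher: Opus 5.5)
Your proposal is correct. It is essentially the standard proof behind the cited Proposition~6.12 of Dold; the paper gives no argument of its own beyond the citation. The steps are: realise $\check{H}^*(X,Y)$ as $\underset{\longrightarrow}{\lim}\,H^*(V,W)$ over open neighbourhood pairs, get surjectivity from the retraction, get injectivity from the straight-line homotopy on the shrunken neighbourhood $V'$, and handle the relative case by exactness of direct limits and the five lemma.

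Of your two options, take the direct-limit description as the definition, as Dold does. For nerve-based \v{C}ech cohomology, that description already presupposes $\check{H}^*(V)\cong H^*(V)$ for open $V\subseteq\mathbb{R}^N$, which is itself a special case of the statement.
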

 \begin{fact}\cite[Proposition~8.12]{dold2012lectures}\label{rnenr}
 
     If $X\subseteq \mathbb{R}^n$ is locally compact and locally contractible then $X$ is an ENR.

     It follows from 
     \cite[Subanalytic triangulation]{hironaka11974triangulations},
     Fact \ref{cechsing}, \ref{rnenr} that for all spaces we will mention, we can use  C$\check{\text{e}}$ch cohomology and singular cohomology interchangably.
 \end{fact}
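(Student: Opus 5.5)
The statement has two parts. The first is Dold's criterion that a locally compact, locally contractible $X\subseteq\mathbb{R}^n$ is a Euclidean neighbourhood retract. The second is the consequence that, for every space we will consider, Čech and singular cohomology agree. I would take the first part essentially as in \cite[Proposition~8.12]{dold2012lectures}, and then derive the second by checking its hypotheses for our sets.

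For the first part, the plan is as follows.

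\emph{Step 1.} Local compactness of $X$ in $\mathbb{R}^n$ means $X$ is locally closed. Hence there is an open $U\subseteq\mathbb{R}^n$ in which $X$ is closed, and it suffices to find a neighbourhood $V$ of $X$ in $U$ and a retraction $r:V\to X$.

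\emph{Step 2.} Decompose $U\setminus X$ into a Whitney-type family of closed dyadic cubes whose diameters are comparable to their distance to $X$. This gives a locally finite cubical complex $K$ on $U\setminus X$ in which cells shrink as they approach $X$.

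\emph{Step 3.} Build $r$ skeleton by skeleton. Send each vertex $v$ of $K$ to a point of $X$ nearest to $v$. Then extend over $k$-cells by induction on $k\le n$, using local contractibility. Near any $x\in X$ choose nested neighbourhoods $W_n\subseteq W_{n-1}\subseteq\cdots\subseteq W_0$ such that $W_{j+1}$ contracts inside $W_j$ (relative to $X$). A $k$-cell whose boundary image lies in $W_{k}$ then admits an extension with image in $W_{k-1}$.

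\emph{Step 4.} Let $V$ be the union of $X$ with the cells small and close enough to $X$ for these extensions to exist. Define $r=\mathrm{id}$ on $X$.

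\textbf{Main obstacle.} The key difficulty is continuity of $r$ at points of $X$. It comes from the Whitney property together with the nested choice of the $W_j$: cells converging to $x\in X$ have diameters tending to $0$, so their images are forced into arbitrarily small $W_0$'s around $x$. Making the choice of the $W_j$ uniform over the compact pieces of $X$ meeting a given cell is the delicate bookkeeping step. I would handle it by using local compactness to cover $X$ by a locally finite family of compact sets, and choosing the contraction neighbourhoods uniformly on each.

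For the second part, every space we will mention is either a zero set $Z(F)$ of a term-definable function, its intersection with closed balls, or a pair built from these. Such sets are locally closed in $\mathbb{R}^n$, hence locally compact. By \cite[Subanalytic triangulation]{hironaka11974triangulations} they are also locally triangulable, hence locally contractible, since a neighbourhood of a point in a locally finite simplicial complex is its open star, which is a cone. By the first part each such set is an ENR. For a pair $Y\subseteq X$ of such spaces, both members are ENRs, so Fact~\ref{cechsing} gives $\check{H}^*(X,Y)\approx H^*(X,Y)$. Hence the two cohomology theories can be used interchangeably.
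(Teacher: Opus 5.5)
Your proposal is correct and follows the paper's route: the paper gives no argument beyond citing Dold's Proposition~8.12 and asserting that Hironaka's triangulation together with Facts~\ref{cechsing} and \ref{rnenr} yields the \v{C}ech/singular comparison, which is exactly your second part, while your first part just sketches the standard proof behind the cited result. The only slip is the indexing in Step~3, which should run the other way (vertices into $W_n$, and a $k$-cell whose boundary lies in $W_{n-k+1}$ extends into $W_{n-k}$), because as you wrote it the $(k-1)$-cells land in $W_{k-2}$ rather than in $W_k$, so the induction does not chain.
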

 
 The Universal Coefficients Theorem relates the rank of homology and cohomology groups.
 \begin{fact}\label{dual}
 \cite[Theorem~3.2.]{hatcher} 
 (Universal Coefficients Theorem)
 
 If a chain complex \( C \) of free abelian groups has homology groups  
\( H_n(C) \), then the cohomology groups \( H^n(C; G) \) of the cochain complex \( \operatorname{Hom}(C_n; G) \)  
are determined by split exact sequences  

\[0 \to \operatorname{Ext}(H_{n-1}(C); G) \to H^n(C; G) \xrightarrow{h} \operatorname{Hom}(H_n(C); G) \to 0\]

     As a result, for a space $X$, if each $H_i(X;\mathbb{Z})$ is finitely generated,
     say $X$ is a finite simplicial complex or a compact triangulable space (\cite[Theorem~2.27]{hatcher}),
     then for each $i$, $rank \,\,H^i(X;\mathbb{Z})=rank\,\, H_i(X;\mathbb{Z})$.
 \end{fact}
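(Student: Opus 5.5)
The plan is to treat the first part of Fact \ref{dual} (the split exact sequence) as the cited Universal Coefficients Theorem, and to derive only the consequence about ranks. First I will reduce to finitely generated homology. If $X$ is a compact triangulable space, then $X$ is homeomorphic to a finite simplicial complex $K$. The simplicial chain groups of $K$ are free abelian of finite rank, so the simplicial homology groups are finitely generated. By \cite[Theorem~2.27]{hatcher}, simplicial homology agrees with singular homology, so each $H_i(X;\mathbb{Z})$ is finitely generated. From here on I only use this hypothesis.

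Next I apply the Universal Coefficients Theorem. I take $C$ to be the singular chain complex of $X$, which consists of free abelian groups, and set $G=\mathbb{Z}$. Since the exact sequence splits,
\[
H^i(X;\mathbb{Z})\cong \operatorname{Ext}(H_{i-1}(X;\mathbb{Z});\mathbb{Z})\oplus \operatorname{Hom}(H_i(X;\mathbb{Z});\mathbb{Z}).
\]
Rank is additive over direct sums, so it suffices to compute the rank of each summand.

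For these computations I use the structure theorem for finitely generated abelian groups. I write $H_{i-1}\cong\mathbb{Z}^{a}\oplus T_{i-1}$ and $H_i\cong \mathbb{Z}^{b}\oplus T_i$, where $T_{i-1}$ and $T_i$ are finite. Both $\operatorname{Ext}$ and $\operatorname{Hom}$ commute with finite direct sums in the first variable.

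For the $\operatorname{Ext}$ term, $\operatorname{Ext}(\mathbb{Z};\mathbb{Z})=0$ and $\operatorname{Ext}(\mathbb{Z}/m;\mathbb{Z})\cong\mathbb{Z}/m$. Hence $\operatorname{Ext}(H_{i-1};\mathbb{Z})\cong T_{i-1}$, which is finite and so has rank $0$.

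For the $\operatorname{Hom}$ term, $\operatorname{Hom}(\mathbb{Z}/m;\mathbb{Z})=0$ and $\operatorname{Hom}(\mathbb{Z};\mathbb{Z})=\mathbb{Z}$. Hence $\operatorname{Hom}(H_i;\mathbb{Z})\cong\mathbb{Z}^{b}$, which has rank $b=\operatorname{rank} H_i(X;\mathbb{Z})$.

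Adding the two contributions gives $\operatorname{rank} H^i(X;\mathbb{Z})=0+b=\operatorname{rank}H_i(X;\mathbb{Z})$, as claimed.

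There is no real obstacle in this argument. The only point needing care is the finite generation hypothesis. Without it, $\operatorname{Hom}(H_i;\mathbb{Z})$ can have a different rank; for example, $\operatorname{Hom}(\bigoplus_{\mathbb{N}}\mathbb{Z};\mathbb{Z})$ is uncountable. Also, $\operatorname{Ext}$ of a torsion-free group that is not finitely generated need not vanish. This is exactly why the statement restricts to compact triangulable spaces, and why in the application the triangulability supplied by \cite[Subanalytic triangulation]{hironaka11974triangulations} is used.
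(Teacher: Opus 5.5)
Your proposal is correct and follows essentially the paper's route. The paper gives no separate proof: it cites Hatcher's Theorem~3.2 and Theorem~2.27 and states the rank equality as an immediate consequence. Your computation $\operatorname{Ext}(H_{i-1};\mathbb{Z})\cong T_{i-1}$ and $\operatorname{Hom}(H_i;\mathbb{Z})\cong\mathbb{Z}^{b}$, for finitely generated homology, is exactly the standard derivation of that consequence (Hatcher's Corollary~3.3).
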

 \ignore{
The following fact is immediate from Fact \ref{cechctn}, \ref{cechsing}, \ref{dual}.
 \begin{fact}\label{intlimhom}
Let $V$ be a  CW-complex in $\mathbb{R}^n$.
Suppose $V=\underset{r\in\mathbb{N}}{\bigcap}K_r$ where $K_r$ is a decreasing sequence of compact sets in $\mathbb{R}^n$.
       Given $i\in\mathbb{N}$,
if  $M\in\mathbb{N}$ satisfies that for all $r\in\mathbb{N}$,
rank $ H_i(K_r)\leq M$.
     
     Then \[
     \text{rank} \, H_i(V;\mathbb{Q})\leq M
     \] 
 \end{fact}
  \begin{fact}\label{limunionr}
 Let $F$ be a term-definable function. 
    Suppose $V=\underset{r\in\mathbb{N}}{\bigcap}K_r$ where $K_r$ is a decreasing sequence of compact sets in $\mathbb{R}^n$.
     Given $i\in\mathbb{N}$,
if  $M\in\mathbb{N}$ satisfies that for all $r\in\mathbb{N}$,
rank $ H_i(K_r;\mathbb{Q})\leq M$.
     
     Then \[
     \text{rank} \, H_i(Z(F);\mathbb{Q})\leq M
     \]
 \end{fact}
 \begin{proof}
     Immediate from Fact \ref{trian} and Fact \ref{intlimhom}.
 \end{proof}}
 \subsection{Bounding \# of Connected Components of Term-Definable Hypersurfaces}

 \subsubsection{Applying Weak Morse Inequalities}
\indent

 \begin{fact}\cite[Corollary~3.45]{hardt1976triangulation}
     (Alexander Duality.)
     
     If $K$ is a compact, locally contractible, nonempty, proper subspace
of $S^n$, then \[\Tilde{H}_{i}(S^n
\setminus K; \mathbb{Z})
\approx \Tilde{H}^{n-i-1}(K;\mathbb{Z})\]
for all $i$.

Here, $\Tilde{H}_*$ 
(reps. $\Tilde{H}^*$) denotes  the reduced homology (resp. reduced cohomology).
For a space $X$,
\[{H}_i(X;G)\approx
\begin{cases}
\Tilde{H}_i(X;G)
      &\qquad\text{if $i>0$}\\
         \Tilde{H}_0(X;G)\oplus\mathbb{Z} &\qquad\text{if $i=0$}.
\end{cases}
\]
(See \cite[Chapter~2, Section~2.1, p.110]{hatcher}.)
\[\Tilde{H}^i(X;G)\approx
\begin{cases}
    {H}^i(X;G)
    &\qquad\text{if $i>0$}\\
  \operatorname{Hom}( \Tilde{H}_0(X), G)
     &\qquad\text{if $i=0$}.
\end{cases}
\]
(See \cite[Chapter~3, Section~3.1, p.199]{hatcher}.)
\ignore{
Also, because of \cite[3A.4, 3A.5, 3A.6]{hatcher},
when $X$ is a compact triangulable space, 
we do not distinguish $rank\,\, H_i(X;\mathbb{Z})$ and $\dim_{\mathbb{Q}}\,\, H_i(X;\mathbb{Q})$.}
 \end{fact}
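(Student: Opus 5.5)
The plan is to follow the standard route through a duality theorem for compact subsets of an orientable manifold, as in Hatcher's proof of Alexander duality. Throughout, $M=S^n$ with a fixed orientation, and $K\subsetneq S^n$ is compact, nonempty and locally contractible.

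The first step is to reduce to relative homology. Apply the long exact sequence of reduced homology to the pair $(S^n,S^n\setminus K)$. Since $K$ is proper, $S^n\setminus K$ is nonempty, so the reduced sequence makes sense. We have $\tilde H_j(S^n)=0$ for $j\neq n$. Hence the connecting map gives
\[
\tilde H_i(S^n\setminus K;\mathbb{Z})\cong H_{i+1}(S^n,S^n\setminus K;\mathbb{Z})\qquad\text{for } i\le n-2 .
\]
The two top degrees $i=n-1$ and $i=n$ need separate bookkeeping. For $i=n-1$, the map $H_n(S^n)\to H_n(S^n,S^n\setminus K)$ is injective because $K\neq\emptyset$. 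Its cokernel is therefore $\tilde H_{n-1}(S^n\setminus K)$, and this matches the passage from $H^0(K)$ to $\tilde H^0(K)$. For $i=n$, both sides vanish, since $S^n\setminus K$ is an open proper subset of $S^n$ (so $H_n(S^n\setminus K)=0$) and $H^{-1}(K)=0$.

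The second and central step is the duality isomorphism
\[
H_{j}(M,M\setminus K;\mathbb{Z})\cong \check H^{\,n-j}(K;\mathbb{Z})=\lim_{\longrightarrow}H^{n-j}(U;\mathbb{Z}),
\]
where the direct limit runs over open neighborhoods $U$ of $K$. The map is cap product with the orientation classes $\mu_K\in H_n(M,M\setminus K)$, passed to the direct limit. To show it is an isomorphism, I would proceed in four cases.

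\begin{itemize}
\item First take $K$ a compact convex set inside a chart. Both sides are computed directly: they are $\mathbb{Z}$ in degree $j=n$ and $0$ otherwise.
\item Next take $K$ a finite union of such convex sets, inside a chart. Induct on the number of pieces using Mayer--Vietoris. On the left this is the relative Mayer--Vietoris sequence. On the right it is the Mayer--Vietoris sequence for Čech cohomology, which holds because Čech cohomology is continuous under direct limits. Compatibility of the cap product with the connecting maps, up to sign, lets the five lemma close the induction.
\item Then take an arbitrary compact $K$ inside a chart. Write $K$ as a decreasing intersection $K=\bigcap_r K_r$ of such finite unions. Use that both sides commute with this limit: on the left because $H_*(M,M\setminus K)$ is the direct limit of $H_*(M,M\setminus K_r)$, by the compact-support argument of Fact~\ref{drlim}; on the right by continuity of Čech cohomology.
\item Finally, for general compact $K\subseteq M$, cover $K$ by finitely many compact pieces each lying in a chart, and run Mayer--Vietoris once more.
\end{itemize}

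I expect this step to be the main obstacle. The delicate points are the sign and naturality checks for cap product against the connecting maps, and the continuity argument in the third case.

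The third step converts Čech cohomology to singular cohomology. Since $K\subseteq S^n$ is compact and locally contractible, it is an ENR by Fact~\ref{rnenr}. Hence $\check H^*(K)\cong H^*(K)$ by Fact~\ref{cechsing}. Concretely, $K$ is a retract of some neighborhood $U_0$, and the neighborhoods $U$ with $U\subseteq U_0$ that deformation retract appropriately are cofinal, so the direct limit stabilizes.

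Combining the three steps with $j=i+1$ gives $\tilde H_i(S^n\setminus K)\cong \tilde H^{\,n-i-1}(K)$ for $i\le n-2$. The end cases from the first step supply the remaining degrees. The switch between reduced and unreduced groups is exactly the one recorded after the statement: $H_0=\tilde H_0\oplus\mathbb{Z}$ and $\tilde H^0=\operatorname{Hom}(\tilde H_0,\mathbb{Z})$.
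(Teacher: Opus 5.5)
Your proposal is correct and is essentially the standard proof from the source the paper relies on, namely Hatcher's Theorem~3.44 and Corollary~3.45. That proof builds the cap-product duality $H_j(M,M\setminus K)\cong \check H^{n-j}(K)$ from convex sets, finite unions and decreasing intersections, then uses the long exact sequence of $(S^n,S^n\setminus K)$ and the ENR identification $\check H^*(K)\cong H^*(K)$; the paper itself gives no argument beyond citing this result, and since $K$ is proper it already lies in the single chart $S^n\setminus\{p\}$ for any $p\notin K$, so your fourth case is not needed here.
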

 \ignore{
 \begin{fact}
Given a continuous function $F:\mathbb{R}^n\to \mathbb{R}$,     $\partial\{x\in\mathbb{R}^n:F(x)\leq\delta\}=\{x\in\mathbb{R}^n:F(x)=\delta\}$.

 \end{fact}
 }
 \begin{theorem}\label{bddcsmconn}
    Let $A\subseteq\mathbb{R}^n$ be a quantifier-free definable set.
\ignore{Fix $k\in\mathbb{N}$.}
     Let $L$ be an affine subspace in $\mathbb{R}^n$\ignore{defined by $k$ many $(n+1)$-tuples}. 
     Suppose that the regularity assumption in 
     Theorem \ref{rtb} holds.
     
    Then there is $N\ignore{(k)}\in\mathbb{N}$ such that for all $\epsilon,\delta\in(0,1)$, 
    if $\delta^2$ is a regular value for $F_L(X)^2+\epsilon (X_1^2+\cdots+X_n^2)$,
    where $F_L$ is a function of the form in Definition \ref{qfaff},
    then 
     \[\sum
     \text{rank}\, H^i(\{x\in\mathbb{R}^n:
F_L(x)^2+\epsilon(x_1^2+\cdots+x_n^2)
\leq\delta^2
     \})\leq N.
     \]
 \end{theorem}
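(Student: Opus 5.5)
Set $g(x):=F_L(x)^2+\epsilon(x_1^2+\cdots+x_n^2)$ and $K:=\{x\in\mathbb{R}^n: g(x)\leq\delta^2\}$. Because of the term $\epsilon\|x\|^2$, $K$ lies in the ball of radius $\delta/\sqrt{\epsilon}$, so $K$ is compact. Since $\delta^2$ is a regular value of $g$, $K$ is a compact smooth manifold with boundary $\partial K=\{g=\delta^2\}$, and $\partial K$ is a compact smooth hypersurface. The plan has three steps: relate the Betti numbers of $K$ to those of $\partial K$; bound the Betti numbers of $\partial K$ by a Morse count (Fact \ref{morine}); and bound that count uniformly by Theorem \ref{rtb}. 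This is Milnor's argument in \cite{milnor1964betti}.

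\emph{Step 1: from $K$ to $\partial K$.} Embed $\mathbb{R}^n\subseteq S^n$. By Alexander duality (with $K$ compact and locally contractible, being a manifold with boundary), $\sum_i\operatorname{rank}\tilde H^i(K)$ equals the sum of the reduced Betti numbers of $S^n\setminus K$. Collaring the boundary, the interior of $K$ together with $S^n\setminus K$ glue along $\partial K$, and Mayer--Vietoris gives
\[
\sum_i \operatorname{rank}H^i(K)+\sum_i\operatorname{rank}H^i(S^n\setminus K)\leq \sum_i \operatorname{rank}H^i(\partial K)+2 .
\]
Hence $\sum_i\operatorname{rank}H^i(K)\leq \tfrac12\sum_i\operatorname{rank}H^i(\partial K)+1$, and it suffices to bound the Betti numbers of $\partial K$. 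Ranks can be computed rationally via Fact \ref{dual}, and singular and \v{C}ech cohomology agree on these spaces by Fact \ref{cechsing}.

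\emph{Step 2: Morse count on $\partial K$.} Choose a linear height function $h(x)=\sum_i a_ix_i$ whose restriction to $\partial K$ is Morse. By Weak Morse Inequalities (Fact \ref{morine}), the total Betti number of $\partial K$ is at most the number of critical points of $h|_{\partial K}$. By Lagrange multipliers, these critical points are the solutions $x$ of
\[
g(x)=\delta^2,\qquad \frac{\partial g}{\partial x_1}(x)\,a_j-\frac{\partial g}{\partial x_j}(x)\,a_1=0\quad (j=2,\ldots,n),
\]
(assuming $a_1\neq0$, which can be arranged). This is a system of $n$ equations in $n$ unknowns. Non-degeneracy of a critical point is exactly non-singularity of the Jacobian of this system at that solution.

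\emph{Step 3: fitting the form (\ref{gensys}).} Here I expect the main obstacle. The system must be written in the form (\ref{gensys}): an $\mathbb{R}_{an,\exp}$-definable smooth $P$ applied to $X$, bounded parameters, and $\varphi$- and $\varphi'$-monomials. The derivatives of $F_L$ introduce, by the chain rule, only $\varphi'$-monomials together with $\mathbb{R}_{an,\exp}$-definable pieces; this is precisely why $\varphi'\circ g$ is included in $\mathcal{F}$. The bounded parameters $l$, $\epsilon$, $\delta$ of Definition \ref{qfaff} fill the slots $\overline{l},\overline{\epsilon},\delta$, and the direction $a$ (normalized to $[-1,1]^n$) can be absorbed into the $\overline{l}$ slots. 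Theorem \ref{rtb}, under its regularity assumption, then gives a bound $N'$ on the number of non-singular zeroes that is uniform in all these parameters. Uniformity is essential: $N'$ must not depend on $\epsilon$, $\delta$, or $L$. The remaining subtlety is the choice of $a$. I would use the standard fact that $h|_{\partial K}$ is Morse for almost every $a$ (the Gauss map argument: $a$ should be a regular value of the Gauss map of $\partial K$). For such $a$, every critical point is a non-singular zero of the system, so there are at most $N'$ of them. Setting $N=N'/2+1$ then proves the theorem.
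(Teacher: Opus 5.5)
Your proposal is correct and follows the paper's own route (Milnor's argument): Alexander duality with Mayer--Vietoris to pass from $K$ to $\partial K=\{g=\delta^2\}$, a linear height function that is Morse on $\partial K$, the first-order critical-point system, the uniform bound from Theorem \ref{rtb}, and the Weak Morse Inequalities. The differences are only refinements: you settle for $\sum_i\operatorname{rank}H^i(K)\leq\frac12\sum_i\operatorname{rank}H^i(\partial K)+1$ instead of the paper's exact halving, and you keep the height direction $a$ as a bounded parameter (the paper instead rotates coordinates) and check that non-degenerate critical points are exactly non-singular zeroes, which makes explicit two points the paper leaves implicit: the uniformity of the bound in $(\epsilon,\delta,L)$, and why a bound on ``non-singular zeroes'' also bounds the critical points.
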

 \begin{proof}
$\{x\in\mathbb{R}^n:
F_L(x)^2+\epsilon(x_1^2+\cdots+x_n^2)\leq\delta^2
     \}$ is compact, but not necessarily a manifold.
     $\{x\in\mathbb{R}^n:
F_L(x)^2+\epsilon(x_1^2+\cdots+x_n^2)=\delta^2
     \}$ is a manifold when $\delta^2$ is a regular value.
We can relate the sum of Betti numbers of these two spaces by repeating the argument in \cite[(9)~Theorem]{milnor1956immersion}.
Let $K^{\square}(\epsilon,\delta)$ denote   $\{x\in\mathbb{R}^n:
F_L(x)^2+\epsilon(x_1^2+\cdots+x_n^2)\square \delta^2
     \}$, where $\square\in\{\leq ,=,<,>\}$.
     \footnote{
     $K^{\leq }(\epsilon,\delta)$ is subanalytic because $\varphi$ is analytic. 
     }
\begin{claim}
    If $\delta^2$ is a regular value of 
    $F_L(X)^2+\epsilon (X_1^2+\cdots+X_n^2)$,
    then \[
    \sum rank\,\, H^i(K^{\leq }(\epsilon,\delta))=
    \dfrac{1}{2}  \sum rank\,\, H^i( K^{=}(\epsilon,\delta))
    \]
\end{claim}
 \begin{proof}
 Let $S^{n}$ be the one-point compactification of $\mathbb{R}^n$.
By Alexander Duality applied to $K^{\leq }(\epsilon,\delta)$, 
 \[
 \sum  rank\,\, H^i(K^{=}(\epsilon,\delta))= 
 \sum  rank \,\, H_i(S^n\setminus K^{=}(\epsilon,\delta)).
 \]
 By Mayer-Vietoris (\cite[Chapter~2, Section~2.2, p.149]{hatcher}), since $K^{<}(\epsilon,\delta)$ and $S^n\setminus K^{\leq }(\epsilon,\delta)$ are open and disjoint,
 \[
 \sum rank\,\, H_i(S^n\setminus K^{=}(\epsilon,\delta))=
 \sum rank \,\,H_i( K^{<}(\epsilon,\delta))
 +\sum rank\,\, H_i(S^n\setminus K^{\leq }(\epsilon,\delta)).
 \]
Since $ K^{\leq }(\epsilon,\delta)$ is a compact manifold with boundary, by \cite[Proposition~3.42]{hatcher} and \cite[Chapter~6, Theorem~20]{spanier2012algebraic},
\[
\sum  rank \,\,H_i( K^{<}(\epsilon,\delta))= 
\sum rank \,\,H_i( K^{\leq }(\epsilon,\delta)).
\]
By Alexander Duality applied to $ K^{\leq }(\epsilon,\delta)$,\[
\sum rank \,\,H^i( K^{\leq }(\epsilon,\delta))=
\sum rank\,\, H_i( S^n\setminus K^{\leq }(\epsilon,\delta)).
\]
It follows that 
\[
    \sum rank\,\, H^i(K^{\leq }(\epsilon,\delta))= 
    \dfrac{1}{2}  \sum rank\,\, H^i( K^{=}(\epsilon,\delta)).
    \]
 
     (See also \cite[Theorem~11.5.3]{bochnak2013real} for a different exposition.)
 \end{proof}

 Hence 
it suffices to show that there is $N\in\mathbb{N}$ such that for all  
$\epsilon,\delta\in(0,1)$, 
    if $\delta^2$ is a regular value for $F_L(X)^2+\epsilon (X_1^2+\cdots+X_n^2)$, then 
     \[
  \sum   {rank}\,\, H^i(\{x:F_L(x)^2+\epsilon(x_1^2+\cdots+x_n^2)=\delta^2
     \})\leq 
     N.
     \]
     Notice that as in \cite{milnor1964betti}, since $\delta^2$ is a regular value for  $F_L(X)^2+\epsilon (X_1^2+\cdots+X_n^2)$,
     $\{x\in\mathbb{R}^n:F_L(x)^2+\epsilon(x_1^2+\cdots+x_n^2)=\delta^2\}$ 
     is  a compact smooth manifold where Weak Morse Inequality applies.

As in \cite[Theorem~1]{milnor1964betti},
taking a rotation of the coordinates if necessary,
we may assume that the projection onto the 
$x_n$-coordinate is a Morse function on $\{x\in\mathbb{R}^n: F_L(x)^2+\epsilon (x_1^2+\cdots+x_n^2)=\delta^2\}$.

Taking first partial derivatives,
we get the system
      \begin{equation}
      \label{1der}
          \begin{cases}
        2F_L\cdot\dfrac{\partial  F_L}{\partial{X_1}}
          +2\epsilon X_1&=0\\
              ...\\...\\...\\
            2F_L\cdot    \dfrac{\partial  F_L}{\partial{X_{n-1}}}  +2\epsilon X_{n-1}&=0\\
F_L(X)^2+\epsilon(X_1^2+\cdots+X_n^2)
            &=\delta^2
          \end{cases}
      \end{equation}
      
     By Theorem \ref{rtb}, there is $N\in\mathbb{N}$ such that 
     the system (\ref{1der}) has $\leq N$ many solutions.
     Hence by the Weak Morse Inequality (Fact \ref{morine}), 
     \[
\sum  rank\,
     H^i
     (\{x\in\mathbb{R}^n:
     F_L(x)^2+\epsilon (x_1^2+\cdots+x_n^2)=\delta^2\})\leq N.
     \]
 \end{proof}
 \begin{theorem}\label{bddcomp}
Let $A\subseteq\mathbb{R}^n$ be a quantifier-free definable set.
Suppose the regularity assumption in Theorem \ref{rtb} holds.
    
Then there is $N\in\mathbb{N}$ such that  for all $r\in\mathbb{N}$,  and all  affine subspace $L$ in $\mathbb{R}^n$,
\[\sum
{rank} \,\, H^i(A\cap L\cap B_{\leq r}(0))\leq N.
\]
 \end{theorem}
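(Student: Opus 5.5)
The plan is to follow Milnor's approximation argument from \cite[Theorem~2]{milnor1964betti}, with Theorem \ref{bddcsmconn} supplying the uniform bound for the approximating compact sets. Fix $A$ of the form (\ref{qfform}) and let $F$ be the associated function, so that $A=\pi(Z(F))$. For an affine subspace $L$, the set $A\cap L$ is the projection of $Z(F_L)$, where $F_L$ is as in Definition \ref{qfaff}. Since projection is continuous, it is cleaner to bound the topology of $Z(F_L)\cap B_{\leq r}(0)$ directly. We absorb the ball into the construction: the compact set $V_r:=\{x: F_L(x)^2+\epsilon\|x\|^2\le\delta^2\}$ is contained in the ball of radius $\delta/\sqrt{\epsilon}$. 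Choosing $\epsilon$ and $\delta$ appropriately, with $\delta\to0$ and $\delta^2/\epsilon\to r^2$, these sets shrink to $Z(F_L)\cap B_{\le r}(0)$. For a cleaner argument one can instead add the term $(\|x\|^2-r^2+t^2)$ with a slack variable $t$, so that the ball is itself a zero set; this keeps us within the class of functions covered by Definition \ref{qfaff}, and Theorem \ref{rtb} is uniform in parameters.

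The key steps, in order, are as follows.

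First, define the decreasing family of compact sets $K_{\delta}:=\{x:F_L(x)^2+\epsilon(\delta)\|x\|^2\le \delta^2\}$ and check that $\bigcap_\delta K_\delta=Z(F_L)\cap B_{\le r}(0)$ (or the slack-variable version of it).

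Second, by Sard's theorem, we can choose $\delta$ along a sequence tending to $0$ such that each $\delta^2$ is a regular value. Theorem \ref{bddcsmconn} then gives $\sum \mathrm{rank}\, H^i(K_\delta)\le N$, where $N$ depends only on $A$, not on $L$, $\epsilon$, $\delta$, or $r$. The independence of $N$ from $L$ and $r$ comes from Theorem \ref{rtb}, which is uniform in $(\bar l,\bar\epsilon,\delta)\in[-1,1]^{2n+2}$. Parameters outside $[-1,1]$ (such as large $r$) are handled by rescaling coordinates, which is a change of variables that does not affect the term structure.

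Third, pass to the limit using the continuity of \v{C}ech cohomology: $\check H^i(\bigcap K_\delta)=\varinjlim \check H^i(K_\delta)$. A direct limit of groups of rank at most $N$ has rank at most $N$. By Fact \ref{cechsing} and Fact \ref{rnenr}, together with subanalytic triangulability from \cite{hironaka11974triangulations}, \v{C}ech and singular cohomology agree on these sets. Finally, we transfer the bound from $Z(F_L)\cap B_{\le r}$ to its projection $A\cap L\cap B_{\le r}(0)$. For $H^0$, the number of components of the projection is at most that of the source, and this is all that Fact \ref{omintest} needs. For higher $H^i$, I would instead set up the original quantifier-free set directly as a zero set, treating the strict inequalities via the slack variables $X_{i,j}$ exactly as in (\ref{fdef}).

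I expect the main obstacle to be this last transfer step, together with keeping the parameters uniform. The projection $\pi$ does not control higher Betti numbers, and the slack variables $X_{i,j}$ make the fibres of $\pi$ noncompact, so intersecting with a ball in the $x$-coordinates alone does not give a compact zero set. My first attempt would be to bound the slack variables as well, by intersecting with a ball in all variables, and to note that the components of $A\cap L\cap B_{\le r}$ are exhausted by the components of projections of these larger compact pieces as their radius grows. I would then apply Fact \ref{drlim} to this exhaustion.
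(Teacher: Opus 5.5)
Your core construction is the paper's: nested compact sublevel sets $K_i=\{F_L^2+\epsilon_i\|\cdot\|^2\le\delta_i^2\}$ with $\delta_i^2$ regular values, the uniform bound of Theorem \ref{bddcsmconn}, and continuity of \v{C}ech cohomology. The paper's proof ends there by asserting $\bigcap_i K_i=A\cap L\cap B_{\le r}(0)$. You are right to distrust this step, because it holds only when $A$ is defined by equations alone. Once strict inequalities occur, $\epsilon_i$ must also weight the slack coordinates $X_{i,j}$, or $K_i$ is not compact. Then $\bigcap_i K_i$ is $Z(F_L)\cap B_{\le r}$ in the enlarged space. For $A=\{x>0\}$ and $r\ge 2$ it is two arcs lying over an interval $[x_-,x_+]$ with $r^{-2}<x_-<x_+<r$, not $(0,r]$.

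The genuine gap is that your proposal does not carry out this transfer in degrees $i\ge 1$. The fix you sketch, encoding the strict inequalities through the $X_{i,j}$ as in (\ref{fdef}), cannot work for disjunctions, because then the cohomology of $Z(F_L)$ does not see that of $A\cap L$. Concretely, take $q=x^2+y^2-1$ and write the unit circle as $(q=0\wedge y>0)\vee(q=0\wedge -y>0)\vee(q=0\wedge y=0)$. The three pieces of $Z(F)\subseteq\mathbb{R}^4$ are pairwise disjoint closed sets: two copies of (open upper arc)$\times\mathbb{R}$, two copies of (open lower arc)$\times\mathbb{R}$, and two planes over $(\pm1,0)$. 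So $Z(F)$ and every compact piece $Z(F)\cap B_{\le\rho}$ have $H^1=0$, while $H^1(A)\neq 0$. For a single conjunction, $Z(F_L)$ is $2^k$ disjoint copies of $A\cap L$ (the graphs of $\pm F_{1,j}^{-1/2}$), so there a transfer can be arranged; it is the disjunction that breaks it. As it stands, your plan therefore proves only the $H^0$ part of the statement. That part is what Theorem \ref{bddgamma} actually uses.

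Even for $H^0$, Fact \ref{drlim} does not apply to your exhaustion by projections. Take $A$ given by $(x>0)\vee(x=0)$, i.e.\ $A=[0,\infty)$. For $\rho\ge 2$, each $\pi(Z(F)\cap B_{\le\rho})$ is $\{0\}\cup[x_-(\rho),x_+(\rho)]$ with $x_-(\rho)>0$. No piece contains the compact set $[0,1]$, and the direct limit of the $H_0$'s is $\mathbb{Z}^2\neq H_0(A)$.

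An elementary argument works instead. An increasing union of sets, each with at most $N$ connected components, has at most $N$ components. Indeed, $N+1$ points in distinct components of the union would all lie in one member, and would lie in distinct components of that member. Apply this to $A\cap L=\bigcup_\rho\pi(Z(F_L)\cap B_{\le\rho})$, using a ball in all variables; each projection has at most as many components as $Z(F_L)\cap B_{\le\rho}$. This bounds the components of $A\cap L$ directly, with no need for $B_{\le r}(0)$, the slack variable $t$, or rescaling.

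Drop the rescaling remark in any case. Replacing $x$ by $rx$ changes $P$ and the $\varphi$-monomials with $r$, so Theorem \ref{rtb} no longer gives a bound uniform in $r$. Your primary device, absorbing $r$ into $\delta^2/\epsilon$ with $\epsilon,\delta\in(0,1)$, is the right one and matches the paper's construction.
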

 \begin{proof}
     Fix $A,L,r$ as in the statement.
     Let $F_L$ be a function of the form in Definition \ref{qfaff}.
     Construct as in \cite{milnor1964betti} a sequence 
$(\epsilon_i,\delta_i)_{i\in\mathbb{N}}$ of pairs of positive numbers in $(0,1)^2$ such that
     \begin{itemize}
         \item $\{x:F_L(X)^2+\epsilon_1(X_1^2+\cdots+X_n^2)\leq\delta_1^2\}
         \supseteq
\{x:F_L(X)^2+\epsilon_2
(X_1^2+\cdots+X_n^2)
\leq\delta_2^2\}
\supseteq\cdots$;
\item $\underset{i\in\mathbb{N}}{\bigcap}
\{x:F_L(X)^2+\epsilon_i
(X_1^2+\cdots+X_n^2)
\leq\delta_i^2\}
=A\cap L\cap B_{\leq r}(0)$;
\item Each $\delta_i^2$ is a regular value for $F_L(X)^2+\epsilon_i
(X_1^2+\cdots+X_n^2)$.
     \end{itemize}
     By Theorem \ref{bddcsmconn}, there is $N\in\mathbb{N}$ such that
     for all $i\in\mathbb{N}$,
     \[
    \sum {rank}\,\, H^i(\{x:F_L(X)^2+\epsilon_i
(X_1^2+\cdots+X_n^2)\leq\delta_i^2\})\leq  N.     \]
Hence by continuity of  C$\check{\text{e}}$ch cohomology (\cite[Chapter~X, Theorem~3.1]{eilenberg2015foundations} or \cite[6.18~Continuity]{dold2012lectures}),
\[\sum 
{rank}\,\,H^i(A\cap L\cap B_{\leq r}(0))\leq N.
\]
 \end{proof}
 \begin{theorem}\label{bddgamma}
     Given a quantifier-free  set $A\subseteq\mathbb{R}^n$ definable in $\mathbb{R}_{an,\exp,\varphi}$,
     if the regularity assumption in Theorem \ref{rtb} holds, then
    $\gamma(A)<\infty$.
 \end{theorem}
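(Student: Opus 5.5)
The plan is to deduce $\gamma(A)<\infty$ from Theorem \ref{bddcomp} by exhausting $A\cap L$ with balls and passing to the direct limit in homology (Fact \ref{drlim}). The bound $N$ obtained this way will be independent of the affine subspace $L$, which is exactly what Definition \ref{gammaA} requires.

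Fix $A$ quantifier-free definable in $\mathbb{R}_{an,\exp,\varphi}$, and let $N$ be the constant given by Theorem \ref{bddcomp}. By that theorem, $N$ is uniform in $r\in\mathbb{N}$ and in the affine subspace $L$. Fix an affine subspace $L$ and put $X=A\cap L$ and $X_r=A\cap L\cap B_{\leq r}(0)$ for $r\in\mathbb{N}$. Then the $X_r$ form a directed family of subspaces whose union is $X$, and every compact subset of $X$ is bounded and so lies in some $X_r$. Fact \ref{drlim} therefore gives
\[
H_0(X;\mathbb{Q})\cong \lim_{\longrightarrow} H_0(X_r;\mathbb{Q}).
\]
A direct limit of vector spaces each of dimension $\leq N$ has dimension $\leq N$: any $N+1$ elements of the limit already appear at some finite stage $r$, where they are linearly dependent. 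Hence $\operatorname{rank} H_0(X)\leq N$.

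To get $\operatorname{rank} H_0(X_r)\leq N$ at each stage from the cohomological bound $\sum\operatorname{rank} H^i(X_r)\leq N$ of Theorem \ref{bddcomp}, I will pass between cohomology and homology. Each $X_r$ is a compact subanalytic set, since $\varphi$ is analytic. By \cite[Subanalytic triangulation]{hironaka11974triangulations} it is therefore triangulable, hence an ENR by Fact \ref{rnenr}. So its Čech and singular cohomology agree (Fact \ref{cechsing}), and by the Universal Coefficients Theorem (Fact \ref{dual}) $\operatorname{rank} H_0(X_r)=\operatorname{rank} H^0(X_r)\leq N$.

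Since $H_0$ counts path components, $X=A\cap L$ has at most $N$ path-connected components. Path components are connected, so $A\cap L$ is a union of at most $N$ connected sets, and hence $\gamma(A)\leq N<\infty$. (If one prefers to use $N$ for every affine $X$ in Definition \ref{gammaA}, this is exactly what was shown, since $L$ was arbitrary.)

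The main obstacle is justifying triangulability of $X_r$ and the equality of the two cohomology theories. $A$ is only subanalytic in a local sense, so I would have to check that $X_r$ is a bounded subanalytic set. I would try to do this by restricting the finitely many $\varphi$-terms defining $A$ to the compact ball, where they become restricted analytic functions, which is the same kind of reduction used in part 2 of Lemma \ref{indbdd}.
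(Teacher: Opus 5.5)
Your proposal is correct and is essentially the paper's proof: Theorem \ref{bddcomp} with Fact \ref{dual} bounds $\operatorname{rank} H_0(A\cap L\cap B_{\leq r}(0))$ uniformly in $r$ and $L$, and Fact \ref{drlim} passes the bound to $A\cap L$; your direct-limit dimension count and the remark that path components are connected only make explicit what the paper leaves implicit. One correction to your ``obstacle'' paragraph: $X_r$ need not be compact (nor even locally compact) if $A$ uses strict inequalities, so the ENR route is not available in general, but degree $0$ does not need it, because $H^0(X;\mathbb{Q})\cong\operatorname{Hom}(H_0(X;\mathbb{Z}),\mathbb{Q})$ for every space and, once your restriction argument makes $X_r$ definable in $\mathbb{R}_{an,\exp}$, its finitely many connected components are clopen and path connected, so the \v{C}ech and singular counts agree.
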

 \begin{proof}
Let $A\subseteq\mathbb{R}^n$ be a quantifier-free definable set.
Let $L$ be an affine subspace in $\mathbb{R}^n$.
By Theorem \ref{bddcomp} and Fact \ref{dual},
there is $N\in\mathbb{N}$ such that  for all $r\in\mathbb{N}$, 
\[
{rank} \,\, H_0(A\cap L\cap B_{\leq r}(0))\leq N,
\]
Hence, by Fact \ref{drlim}, we conclude that  $rank\,\,H_0(A\cap L)\leq N$.
 \end{proof}
 \subsubsection{Main Theorem}
\indent

 Finally, we conclude that the regularity assumption in Theorem \ref{rtb} implies  the o-minimality of $\mathbb{R}_{an,\exp,\varphi}$.
 \begin{theorem}\label{mainthm}
   Suppose that we have the regularity assumption in Theorem \ref{rtb}.

   Then
$\mathbb{R}_{an,\exp,\varphi}$ is o-minimal.
 \end{theorem}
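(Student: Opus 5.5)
The plan is to verify the hypothesis of Wilkie's test (Fact \ref{omintest}) for the structure $\mathcal{M}=\mathbb{R}_{an,\exp,\varphi}$.

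First, $\mathbb{R}_{an,\exp,\varphi}$ has to be presented as an expansion of the ordered ring of reals by $C^\infty$ functions.
\begin{itemize}
\item The functions $\exp$ and $\varphi$ are analytic on all of $\mathbb{R}$ (Fact \ref{trexpf}), hence $C^\infty$.
\item The restricted analytic functions are not smooth on all of $\mathbb{R}^m$, since they are set to $0$ off a box. Each one can be replaced by a globally defined smooth function, definable in $\mathbb{R}_{an,\exp}$, that agrees with it on the box; for example, multiply an analytic extension to a neighbourhood of the box by a definable smooth cutoff.
\item This change preserves the class of definable sets. The original restricted function is quantifier-free definable from the smooth replacement together with the polynomial conditions cutting out the box, and conversely.
\item One must still make sure that quantifier-free definable sets in the new language are covered by the earlier analysis. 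The reduction in Section \ref{proof} only used that atomic formulas are $F\,*\,0$ with $F$ term-definable, and the smooth replacements are term-definable.
\end{itemize}

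Second, fix a quantifier-free definable $A\subseteq\mathbb{R}^n$. By the disjunctive normal form discussion, $A$ has the form (\ref{qfform}). Theorem \ref{bddgamma}, applied under the regularity assumption of Theorem \ref{rtb}, gives $\gamma(A)<\infty$. The ingredients behind that theorem are:
\begin{itemize}
\item the bound of Theorem \ref{bddcomp}, which is uniform in $r$ and in the affine subspace $L$;
\item the direct limit argument of Fact \ref{drlim}.
\end{itemize}
Since $\gamma(A)$ requires a bound that is uniform over all affine subspaces $X$, I will check that the $N$ produced in Theorem \ref{bddcomp} does not depend on $L$. This uniformity comes from Theorem \ref{rtb}: there the parameters $(\bar l,\bar\epsilon,\delta)$ range over $[-1,1]^{2n+2}$, and the bound is uniform over that range. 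After normalizing, every affine subspace is given by coefficients in $[-1,1]$, so the bound covers all of them. Also, $H_0$ counts path components, and connected components are unions of these, so a bound on $H_0$ bounds the number of connected pieces needed in Definition \ref{gammaA}.

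Third, apply Fact \ref{omintest} to conclude that $\mathbb{R}_{an,\exp,\varphi}$ is o-minimal.

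For the trans-exponential part of the statement in the introduction, use Proposition \ref{addpsi}. Since $\varphi$ is strictly increasing, $\lim_{x\to\infty}\varphi(x)=+\infty$, and $|\varphi|$ is eventually dominated by every $\log_n$. Hence the definable compositional inverse $\varphi^{-1}$, defined on a ray, eventually exceeds every $\exp_n$. This gives a definable trans-exponential function, after extending it arbitrarily but definably to all of $\mathbb{R}$.

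The main obstacle is the first step. Wilkie's test requires everything to be smooth and globally defined, while the restricted analytic functions in $\mathbb{R}_{an}$ are not. One must check carefully that switching to smooth extensions:
\begin{itemize}
\item does not change the quantifier-free definable sets in a way that escapes the analysis of Section \ref{proof};
\item keeps the systems (\ref{gensys}) of the assumed form, with $P_i$ smooth and definable in $\mathbb{R}_{an,\exp}$.
\end{itemize}
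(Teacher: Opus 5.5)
Your proposal is correct and follows the paper's route: the paper's proof is just Theorem \ref{bddgamma}, which gives $\gamma(A)<\infty$ for every quantifier-free definable $A$, combined with Wilkie's test, Fact \ref{omintest}. Your preliminary step fills in a detail the paper leaves implicit: you replace the restricted analytic functions by smooth definable extensions, so the structure is genuinely an expansion by $C^\infty$ functions with the same definable sets. The trans-exponential remark via $\varphi^{-1}$ goes beyond this statement but is correct.
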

 \begin{proof}
     Immediate from Theorem \ref{bddgamma} and Fact \ref{omintest}.
 \end{proof}
 \begin{remark}
     To prove o-minimality of  $\mathbb{R}_{an,\exp,\varphi}$, we only need a uniform bound for the \# of non-singular zeroes for systems of the form in Definition \ref{qfaff}.
     But once we have o-minimality, we have a uniform bound for the \# of non-singular zeroes for any form of definable systems.
This  gives a motivation for studying o-minimality.
 \end{remark}
    \newpage
\ignore{
\section{Questions}
\begin{itemize}
    \item 
More than just finding a trans-exponential o-minimal structure:
What about adding a smooth solution to some Abel equation/dynamic system?
Does it change o-minimality?
\item 
Conversely, how do dynamic systems behave in an o-minimal structure?
\item Suppose we add a smooth function $f:\mathbb{R}^2\rightarrow\mathbb{R}$ (What about $\mathbb{R}^n$ in general?)
where there is $N\in\mathbb{N}$ such that 
for all $t\in \mathbb{R}$,
$f(t,-)$ has $\leq N$ many change of monotonicity,
then adding would not change o-minimality?
Also need to consider the rate of change relative to polynomials?
\item Develop a theory similar to that in \cite{khovanskiui1991fewnomials},
but for solutions to Abel equations instead of for Pfaffian functions?
\item Characterize o-minimality using monodromy?
\item Does the structure $(\mathbb{R},<,+,\cdot,\varphi)$ have quantifier-elimination?
If not, what about model-completeness?
\item Effecitivity of the theory?
\item Relate this problem to the Lindemann-Weierstrass type theorems.
\item It seems o-minimality says much more than algebraic independence of definable functions.
\item Fibration, trivialization in o-minimal geometry.
\end{itemize}
\newpage
\section{Appendix: Understanding the Algebraic Topological Tools}\label{appendix}}

\bibliographystyle{alpha}
\bibliography{references}
\end{document}